\theoremstyle{plain}
\newtheorem{thm}{\protect\theoremname}
  \theoremstyle{plain}
  \newtheorem{lem}[thm]{\protect\lemmaname}
  \theoremstyle{plain}
  \newtheorem{cor}[thm]{\protect\corollaryname}
  \theoremstyle{plain}
  \newtheorem{prop}[thm]{\protect\propositionname}
  \theoremstyle{remark}
  \newtheorem*{rem*}{\protect\remarkname}
  \theoremstyle{remark}
  \newtheorem{rem}[thm]{\protect\remarkname}
\date{}
  \providecommand{\corollaryname}{Corollary}
  \providecommand{\lemmaname}{Lemma}
\providecommand{\theoremname}{Theorem}
  \providecommand{\corollaryname}{Corollary}
  \providecommand{\lemmaname}{Lemma}
  \providecommand{\propositionname}{Proposition}
  \providecommand{\remarkname}{Remark}
\providecommand{\theoremname}{Theorem}
\begin{document}
\global\long\def\divg{{\rm div}\,}

\global\long\def\curl{{\rm curl}\,}

\global\long\def\rt{\mathbb{R}^{3}}

\global\long\def\rn{\mathbb{R}^{n}}

\title{Parabolic equations with divergence-free drift in space $L_{t}^{l}L_{x}^{q}$}

\author{Zhongmin Qian\thanks{Research supported partly by an ERC grant Esig ID 291244. Mathematical
Institute, University of Oxford, OX2 6GG, England. Email: qianz@maths.ox.ac.uk} \ and\ Guangyu Xi\thanks{This work was supported by the Engineering and Physical Sciences Research
Council {[}EP/L015811/1{]}. Mathematical Institute, University of
Oxford, OX2 6GG, England. Email: guangyu.xi@maths.ox.ac.uk}}
\maketitle
\begin{abstract}
In this paper we study the fundamental solution $\varGamma(t,x;\tau,\xi)$
of the parabolic operator $L_{t}=\partial_{t}-\Delta+b(t,x)\cdot\nabla$,
where for every $t$, $b(t,\cdot)$ is a divergence-free vector field,
and we consider the case that $b$ belongs to the Lebesgue space $L^{l}\left(0,T;L^{q}\left(\mathbb{R}^{n}\right)\right)$.
The regularity of weak solutions to the parabolic equation $L_{t}u=0$
depends critically on the value of the parabolic exponent $\gamma=\frac{2}{l}+\frac{n}{q}$.
Without the divergence-free condition on $b$, the regularity of weak
solutions has been established when $\gamma\leq1$, and the heat kernel
estimate has been obtained as well, except for the case that $l=\infty,q=n$.
The regularity of weak solutions was deemed not true for the critical
case $L^{\infty}\left(0,T;L^{n}\left(\mathbb{R}^{n}\right)\right)$
for a general $b$, while it is true for the divergence-free case,
and a written proof can be deduced from the results in \cite{semenov2006regularity}.
One of the results obtained in the present paper establishes the Aronson
type estimate for critical and supercritical cases and for vector
fields $b$ which are divergence-free. We will prove the best possible
lower and upper bounds for the fundamental solution one can derive
under the current approach. The significance of the divergence-free
condition enters the study of parabolic equations rather recently,
mainly due to the discovery of the compensated compactness. The interest
for the study of such parabolic equations comes from its connections
with Leray's weak solutions of the Navier-Stokes equations and the
Taylor diffusion associated with a vector field where the heat operator
$L_{t}$ appears naturally.

\medskip

\emph{key words}: Aronson estimate, divergence-free vector field,
Harnack inequality, parabolic equation, weak solution 

\newpage{}
\end{abstract}

\section{Introduction}

We consider parabolic equations of second order with singular divergence-free
drift 
\begin{equation}
\partial_{t}u(t,x)-\sum_{i,j=1}^{n}\partial_{x_{i}}(a_{ij}(t,x)\partial_{x_{j}}u(t,x))+\sum_{i=1}^{n}b_{i}(t,x)\partial_{x_{i}}u(t,x)=0,\label{eq: 1. problem equation}
\end{equation}
where $(a_{ij})$ is a symmetric matrix-valued and Borel measurable
function on $\mathbb{R}^{n}$. Throughout the article, we always assume
that there exists a number $\lambda>0$ such that 
\begin{equation}
\lambda\vert\xi\vert^{2}\leq\sum_{i,j=1}^{n}a_{ij}\xi_{i}\xi_{j}\leq\frac{1}{\lambda}\vert\xi\vert^{2}\tag*{\textbf{E}}\label{eq: 1. ellipticity}
\end{equation}
for every $\xi\in\mathbb{R}^{n}$, and that $b=(b_{i})$ is a divergence-free
vector field, i.e.
\begin{equation}
\sum_{i=1}^{n}\partial_{x_{i}}b_{i}(t,x)=0\tag*{\textbf{S}}\label{eq: 1. divergence free}
\end{equation}
in the sense of distributions for all $t$. Here we only deal with
the case that $b$ belongs to Lebesgue spaces $L^{l}(0,T;L^{q}(\rn))$
(or $L_{t}^{l}L_{x}^{q}$ for short) for $l,q\in[1,\infty]$, and
we will denote 
\[
\Lambda:=\Vert b\Vert_{L_{t}^{l}L_{x}^{q}}=\left(\int_{0}^{T}\left(\int_{\rn}\vert b(t,x)\vert^{q}dx\right)^{\frac{l}{q}}dt\right)^{\frac{1}{l}}.
\]

Equation (\ref{eq: 1. problem equation}) has been well studied without
the divergence-free condition (\ref{eq: 1. divergence free}). A classical
monograph on such equation is \cite{ladyzhenskaia1988linear} by Ladyzhenskaia
et al. If $b$ is assumed to be in $L^{l}(0,T;L^{q}(\rn))$ with 
\[
\gamma=\frac{2}{l}+\frac{n}{q}\leq1,\qquad l\in[2,\infty)\mbox{ and }q\in(n,\infty],
\]
then there is a unique weak solution with Hölder regularity. If $\gamma<1$,
in \cite{aronson1968non}, Aronson proved that there exist Gaussian
upper and lower bounds for the fundamental solution, from which the
Hölder continuity of solutions can be deduced. We call such estimate
on the fundamental solution the Aronson estimate. The reason for such
conditions on $\gamma$ can be easily seen from the natural scaling
property of parabolic equations. Under the following scaling transformation
\[
u^{(\rho)}(t,x)=u(\rho^{2}t,\rho x),\quad a^{(\rho)}(t,x)=a(\rho^{2}t,\rho x),\quad b^{(\rho)}(t,x)=\rho b(\rho^{2}t,\rho x)
\]
for $\rho>0$, if $u$ is a solution to (\ref{eq: 1. problem equation})
with ellipticity constant $\lambda$, then $u^{(\rho)}$ is still
a solution to the parabolic equation with $(a^{(\rho)},b^{(\rho)})$
and condition (\ref{eq: 1. ellipticity}) is still satisfied for the
same $\lambda$. However, for the drift, we have $\Vert b^{(\rho)}\Vert_{L_{t}^{l}L_{x}^{q}}=\rho^{1-\gamma}\Vert b\Vert_{L_{t}^{l}L_{x}^{q}}$.
So when $\gamma=1$, it is scaling invariant and this is called the
critical case. If $\gamma<1$ (resp. $\gamma>1$), we call them subcritical
(resp. supercritical). Since the Harnack inequality has its constant
depending on $n,\lambda$ and $\Lambda$, in the supercritical case,
we are unable to obtain the Harnack inequality uniformly in small
scales. But in the critical and subcritical cases, we still can control
solutions in small scales to obtain the Harnack inequality, hence
obtain Hölder continuity. However, an exceptional case is $L^{\infty}(0,T;L^{n}(\rn))$,
which is critical, but the Harnack inequality fails. The simple reason
is that the energy estimate fails in this case.

In this article we will assume that $b$ is a divergence-free vector
field, which is significant for applications to the study of weak
solutions to incompressible fluid equations. There have been many
works concerning this problem. In \cite{osada1987diffusion}, assuming
that $b\in L_{t}^{\infty}(W_{x}^{-1,\infty})$, Osada established
the fundamental solution estimate following the idea of Nash \cite{nash1958continuity}.
Then in \cite{zhang2004strong}, Zhang obtained the exponential decay
upper bound for the fundamental solution when $m\in(1,2]$ and $b$
satisfies the following entropy condition

\begin{equation}
\int_{0}^{T}\int_{\rn}\vert b\vert^{m}\varphi^{2}dxdt\leq C\int_{0}^{T}\int_{\rn}\vert\nabla\varphi\vert^{2}dxdt\label{eq: zhang condition}
\end{equation}
for every smooth function $\varphi$ on $[0,T]\times\rn$ with compact
support in space. Such condition can be traced back to \cite{kovalenko1991c_0},
in which the previous entropy condition was first introduced for the
time independent case in order to construct a semigroup theory. The
Sobolev embedding allows us to deduce from the entropy condition (\ref{eq: zhang condition})
that $b\in L^{\infty}(0,T;L^{\frac{mn}{2}}(\rn))$. Therefore, the
entropy condition is effectively scaling invariant when $m=2$, i.e.
it is a critical case, while it is supercritical if $m\in(1,2)$.
For the critical case, in an interesting paper \cite{semenov2006regularity}
Semenov further developed a more general condition 
\begin{align}
\int_{0}^{T}\int_{\rn}\vert b\vert\varphi^{2}dxdt\leq & C_{1}\int_{0}^{T}\left[\int_{\rn}\vert\nabla\varphi\vert^{2}dx\right]^{\frac{1}{2}}\left[\int_{\rn}\vert\varphi\vert^{2}dx\right]^{\frac{1}{2}}dt\nonumber \\
 & +\int_{0}^{T}(C_{2}+C(t))\int_{\rn}\vert\varphi\vert^{2}dxdt\label{eq: seminov condition}
\end{align}
and obtained the existence and uniqueness of Hölder continuous weak
solutions for the parabolic equation. For the supercritical case,
Zhang \cite{zhang2006local} considered the following entropy condition:
\begin{equation}
\int_{0}^{T}\int_{\rn}\vert b\vert(\ln(1+\vert b\vert))^{2}\varphi^{2}dxdt\leq C\int_{0}^{T}\int_{\rn}\vert\nabla\varphi\vert^{2}dxdt\label{eq: zhang condition 2}
\end{equation}
and he proved the existence of a bounded weak solution in this case.
In fact, such weak solution must be Hölder continuous in space for
each fixed time $t$. In \cite{friedlander2011global,seregin2012divergence},
assuming that $b\in L_{t}^{\infty}(\textrm{BMO}_{x}^{-1})$ together
with some extra technical conditions, the Hölder continuity of weak
solution is obtained using Moser-De Giorgi's scheme. In a recent paper
\cite{qian2016parabolic}, only assuming that $b\in L_{t}^{\infty}(\textrm{BMO}_{x}^{-1})$,
the Aronson estimate is proved. Further, the uniqueness and Hölder
continuity of the weak solution is also proved. A diffusion process
to the same operator $L_{t}$ can be constructed using the Aronson
estimate. All these results are for divergence-free vector fields
$b$ which belong to the critical case. Under supercritical conditions,
recently in \cite{ignatova2016harnack}, assuming that $b\in L_{t,x}^{q}\cap L_{t}^{\infty}L_{x}^{2}$
with $q\in(\frac{n}{2}+1,n+2]$, Ignatova, Kukavica and Ryzhik proved
a weak Harnack inequality. The constant in the weak Harnack inequality
explodes as the radius of the parabolic ball goes to zero, hence it
fails to yield the Hölder continuity of weak solutions. 

\textcolor{black}{The strongest regularity result, in this respect,
is still the Aronson estimate, which was established in \cite{aronson1968non}.
The original proof of Aronson used the Harnack inequality. Later in
\cite{fabes1989new,norris1991estimates,stroock1988diffusion}, Stroock
et al. proved the Aronson estimate directly by using Nash's scheme
\cite{nash1958continuity}. Under the divergence-free condition, the
dual operator is of the same form up to a sign on $b$, so that Nash's
scheme demonstrates its full power as well for divergence-free vector
fields. Inspired by Stroock et al. \cite{fabes1989new,norris1991estimates,stroock1988diffusion},
our main result in this paper is the upper and lower bounds of fundamental
solutions for the critical and supercrtical cases $b\in L^{l}(0,T;L^{q}(\rn))$
with $\gamma=\frac{2}{l}+\frac{n}{q}\in[1,2)$ for divergence-free
drift $b$. }

In fact we will establish several heat kernel estimates for a range
of critical and supercritical conditions on divergence-free drifts
$b$. We will separate them into upper bound and lower bound. Since
the divergence-free condition on drift $b$ prevents the formation
of local blow up, we have the following upper bound which is of exponential
decay. 
\begin{thm}
\label{upperbound} Suppose conditions (\ref{eq: 1. ellipticity}),
(\ref{eq: 1. divergence free}) hold and $b\in L^{l}(0,T;L^{q}(\rn))$
for some $n\geq3$, $l>1$, $q>\frac{n}{2}$ such that $1\leq\gamma<2$.
In addition, we assume that $a$ and $b$ are smooth with bounded
derivatives. Let $\Gamma(t,x;\tau,\xi)$ be the fundamental solution
of (\ref{eq: 1. problem equation}). Then 
\begin{equation}
\Gamma(t,x;\tau,\xi)\leq\frac{C}{(t-\tau)^{n/2}}\exp\left(m(t-\tau,x-\xi)\right),\label{eq: 0 theorem 1 upper bound}
\end{equation}
where
\[
m(t,x)=\min_{\alpha\in\mathbb{R}^{n}}(C(\vert\alpha\vert^{2}t+\vert\alpha\vert^{\mu}\Lambda^{\mu}t^{\nu})+\alpha\cdot x)
\]
with $\Lambda=\Vert b\Vert_{L^{l}(0,T;L^{q}(\rn))}$, $\mu=\frac{2}{2-\gamma+\frac{2}{l}}$,
$\nu=\frac{2-\gamma}{2-\gamma+\frac{2}{l}}$ and $C=C(l,q,n,\lambda)$. 
\end{thm}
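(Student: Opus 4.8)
The plan is to follow Nash's scheme as refined by Fabes–Stroock and Norris, adapted to the divergence-free setting. The key quantity to control is the $L^1\to L^\infty$ bound on the solution operator, which is equivalent to the on-diagonal estimate $\Gamma(t,x;\tau,\xi)\le C(t-\tau)^{-n/2}$; the off-diagonal exponential factor is then obtained by the standard weighted (Davies–Gaffney / Aronson exponential shift) argument. So I would organize the proof in three stages.

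First I would establish the \emph{$L^2$ energy estimate with the divergence-free drift}. Testing $L_t u=0$ against $u$ itself, the drift term $\int b\cdot\nabla u\, u\,dx = \tfrac12\int b\cdot\nabla(u^2)\,dx = -\tfrac12\int (\divg b)\,u^2\,dx = 0$, so the drift \emph{disappears} from the basic energy identity and one gets $\tfrac{d}{dt}\|u\|_{L^2}^2 + 2\lambda\|\nabla u\|_{L^2}^2 \le 0$. This is the structural reason the divergence-free condition lets us go supercritical. Next I would run the Moser/Nash iteration for the $L^2\to L^\infty$ smoothing: test against powers $u^{2p-1}$, where now the drift term is again $\tfrac1{2p}\int b\cdot\nabla(u^{2p})=0$ by the same computation, so the iteration is driven purely by the diffusion and Sobolev embedding, giving $\|u(t)\|_{L^\infty}\le C t^{-n/4}\|u(\tau)\|_{L^2}$ with $C=C(n,\lambda)$ only — \emph{no dependence on $\Lambda$} in the plain $L^2$ and $L^\infty$ smoothing. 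By duality (the adjoint equation has drift $-b$, still divergence-free) one gets $\|u(t)\|_{L^2}\le C(t-\tau)^{-n/4}\|u(\tau)\|_{L^1}$, and composing yields the on-diagonal bound $\Gamma\le C(t-\tau)^{-n/2}$.

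Second — and this is where $\Lambda$, $\mu$, $\nu$ enter — I would run the \emph{weighted estimate}. Fix $\alpha\in\rn$, set $\psi=e^{\alpha\cdot x}$, and study $v=\psi u$, equivalently estimate $\int \psi^2 u^2\,dx$. Conjugating $L_t$ by $\psi$ produces, besides the Gaussian term $|\alpha|^2\int\psi^2u^2$ from the Laplacian, a drift contribution of the form $\int (\alpha\cdot b)\,\psi^2 u^2\,dx$ (the symmetric-gradient part of the drift again cancels against itself after integration by parts using $\divg b=0$, leaving only the part that pairs with $\alpha$). This term is bounded using Hölder in $x$ with exponent $q$ and the Gagliardo–Nirenberg/Sobolev interpolation $\|\psi u\|_{L^{2q'}}\lesssim \|\nabla(\psi u)\|_{L^2}^\theta\|\psi u\|_{L^2}^{1-\theta}$ for the appropriate $\theta$ determined by $q>n/2$, then Young's inequality to absorb the gradient into $\lambda\|\nabla(\psi u)\|_{L^2}^2$, and finally Hölder in $t$ with exponent $l$. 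Optimizing the exponents in this chain is exactly what produces $\mu=\frac{2}{2-\gamma+2/l}$ and the time-power $\nu=\frac{2-\gamma}{2-\gamma+2/l}$: one arrives at a differential inequality $\tfrac{d}{dt}\|\psi u\|_{L^2}^2 \le C\bigl(|\alpha|^2 + |\alpha|^\mu\Lambda^{\text{(loc)}}\,t^{\,\nu-1}\bigr)\|\psi u\|_{L^2}^2$, and Grönwall gives $\|\psi u(t)\|_{L^2}\le \exp\bigl(C(|\alpha|^2 t+|\alpha|^\mu\Lambda^\mu t^\nu)\bigr)\|\psi u(\tau)\|_{L^2}$. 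Repeating the same weighted computation for the Moser iteration (the weight commutes through the power trick at the cost of the same type of terms) upgrades this to a weighted $L^1\to L^\infty$ bound.

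Third, I would \emph{combine}: from the weighted $L^1\to L^\infty$ estimate applied to the fundamental solution with data a delta at $\xi$, one reads off
\[
\Gamma(t,x;\tau,\xi)\le \frac{C}{(t-\tau)^{n/2}}\,e^{-\alpha\cdot(x-\xi)}\exp\bigl(C(|\alpha|^2(t-\tau)+|\alpha|^\mu\Lambda^\mu(t-\tau)^\nu)\bigr)
\]
for every $\alpha$, and replacing $\alpha$ by $-\alpha$ and taking the infimum over $\alpha\in\rn$ gives precisely \eqref{eq: 0 theorem 1 upper bound} with $m(t,x)=\min_\alpha\bigl(C(|\alpha|^2 t+|\alpha|^\mu\Lambda^\mu t^\nu)+\alpha\cdot x\bigr)$. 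The smoothness-with-bounded-derivatives hypothesis is used only to guarantee a classical fundamental solution exists and that all the integrations by parts are legitimate; the constants produced depend only on $l,q,n,\lambda$, as claimed.

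\medskip

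\noindent\textbf{Main obstacle.} The delicate point is the weighted drift estimate in stage two: one must absorb $\int(\alpha\cdot b)\psi^2u^2$ into the good terms \emph{with constants independent of $\alpha$ and of the weight}, which forces a careful split of the $|\alpha|$-dependence (the part paired with $\nabla(\psi u)$ versus the part left as a zero-order term) and a correct bookkeeping of the Sobolev exponent $\theta$ against $q>n/2$ and of the time-exponent against $l>1$ — it is this optimization, under the constraint $1\le\gamma<2$, that pins down $\mu$ and $\nu$ and that must be done so the Grönwall/Moser machinery still closes. Getting the endpoint behavior right (especially as $\gamma\to 2$, where $\mu\to 2/(2/l)=l$-type degeneration, and the borderline roles of $l>1$, $q>n/2$) is the technically demanding part.
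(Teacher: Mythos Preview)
Your proposal is correct and follows essentially the same route as the paper: Davies exponential conjugation $\psi=e^{\alpha\cdot x}$, the divergence-free condition killing the symmetric drift term and leaving only $\int(\alpha\cdot b)\,f_t^{2p}$, H\"older--Sobolev--Young to control that residual (this is exactly where $\mu,\nu$ come from), a weighted $L^2$ Gr\"onwall (the paper's Lemma~\ref{lem5}), a weighted Moser iteration for $L^2\to L^\infty$ (the paper's Lemma~\ref{lem6}), duality plus Chapman--Kolmogorov, and optimization over $\alpha$.

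One point worth sharpening: you invoke ``Nash's scheme as refined by Fabes--Stroock and Norris,'' but the paper explicitly abandons Nash's differential-inequality iteration in favor of Moser's iteration with \emph{time} cut-offs $\eta_k$ supported on shrinking intervals $I_k\subset(0,T)$, precisely because Nash's iteration needs a bound on $b$ uniform in $t$ (i.e.\ $l=\infty$). Your Stage~2 differential inequality $\tfrac{d}{dt}\|\psi u\|_{L^2}^2\le C(|\alpha|^2+|\alpha|^\mu\Lambda^{\mathrm{loc}}t^{\nu-1})\|\psi u\|_{L^2}^2$ is fine for the $L^2$ energy (it is really an integral inequality followed by Gr\"onwall, as in Lemma~\ref{lem5}), but for the $L^{2p}\to L^{2p\chi}$ step with $\chi=\frac{n+2}{n}$ you must pass through mixed space-time norms and localize in $t$; this is the mechanism by which $l<\infty$ is accommodated, and it is the one concrete ingredient your sketch does not name. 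Your Stage~1 unweighted discussion is correct but redundant---the paper works directly in the weighted setting throughout, and the $\Lambda$-independent on-diagonal bound is just the case $\alpha=0$.
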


The point here is that the upper bound above only depends on $n$,
$\lambda$ and $\Lambda$, but not on the bounds of the derivatives
of $a$ or $b$. \textcolor{black}{One restriction of Nash's scheme
is that its iteration procedure requires the bound on $b$ to be uniform
in time, i.e. $l=\infty$. So in general, we use Moser's iteration
scheme instead and use cut-off functions.}

\textcolor{black}{When $\gamma=1$, the upper bound (\ref{eq: 0 theorem 1 upper bound})
is in fact a Gaussian function, which further implies a Gaussian lower
bound and regularity theory. Here we only write down the result in
the case when $b\in L^{\infty}(0,T;L^{n}(\rn))$, because it is the
marginal case for which the regularity result is missing if $b$ is
not assumed to be divergence-free. More explicitly, we prove the following}
\begin{thm}
\label{The-fundamental-solution-ln}The fundamental solution $\Gamma(t,x;\tau,\xi)$
to (\ref{eq: 1. problem equation}) satisfying conditions (\ref{eq: 1. ellipticity})
and (\ref{eq: 1. divergence free}), with $b\in L^{\infty}(0,T;L^{n}(\rn))$
possesses the following heat kernel estimate
\[
\frac{1}{C(t-\tau)^{n/2}}\exp\left(-\frac{C\vert x-\xi\vert^{2}}{t-\tau}\right)\leq\Gamma(t,x;\tau,\xi)\leq\frac{C}{(t-\tau)^{n/2}}\exp\left(-\frac{\vert x-\xi\vert^{2}}{C(t-\tau)}\right)
\]
for $t>\tau$, where $\Lambda=\Vert b\Vert_{L^{\infty}(0,T;L^{n}(\rn))}$
and $C$ depends only on $n\geq3$, $\lambda$ and $\varLambda$.
\end{thm}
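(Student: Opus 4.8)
The plan is to read off the upper bound from Theorem~\ref{upperbound} and then bootstrap it to the matching lower bound by the classical Nash--Aronson chaining argument, which goes through for divergence-free $b$ because the formal adjoint of $L_{t}$ is again an operator of the type \eqref{eq: 1. problem equation}. For the upper bound we are in the case $l=\infty$, $q=n$, so $\gamma=\tfrac{2}{l}+\tfrac{n}{q}=1$, hence $\mu=\tfrac{2}{2-\gamma+2/l}=2$ and $\nu=\tfrac{2-\gamma}{2-\gamma+2/l}=1$; thus, with $A:=C(1+\Lambda^{2})$,
\[
m(t,x)=\min_{\alpha\in\mathbb{R}^{n}}\bigl(A|\alpha|^{2}t+\alpha\cdot x\bigr)=-\frac{|x|^{2}}{4A\,t},
\]
the minimum being attained at $\alpha=-x/(2At)$. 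Inserting this into \eqref{eq: 0 theorem 1 upper bound} and absorbing $1+\Lambda^{2}$ into the constant yields the asserted upper estimate with $C=C(n,\lambda,\Lambda)$; the hypotheses of Theorem~\ref{upperbound} hold since $n\ge3$, $l=\infty>1$, $q=n>n/2$ and $1\le\gamma=1<2$. As there, I would first argue assuming $a,b$ smooth with bounded derivatives, so that $\Gamma$ is a genuine smooth positive kernel, and remove this at the end.

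For the lower bound I would first record the two conservation laws $\int\Gamma(t,x;\tau,\xi)\,d\xi=1$ and $\int\Gamma(t,x;\tau,\xi)\,dx=1$ for $t>\tau$: the first because $v(t,x):=\int\Gamma\,d\xi$ solves $L_{t}v=0$ with $v(\tau^{+},\cdot)\equiv1$ and so $v\equiv1$ by uniqueness (this does not use \eqref{eq: 1. divergence free}); the second by integrating the equation over $x$, which is exactly where \eqref{eq: 1. divergence free} enters, via $\partial_{t}\int\Gamma\,dx=-\int b\cdot\nabla_{x}\Gamma\,dx=\int(\nabla_{x}\cdot b)\,\Gamma\,dx=0$. (For a general $b$ this second marginal need not be conserved, one concrete reason the lower bound can fail.) I would also note that the adjoint $-\partial_{\tau}-\nabla_{\xi}\cdot(a\nabla_{\xi})-b\cdot\nabla_{\xi}$ is, after time reversal, again of the form \eqref{eq: 1. problem equation} with the divergence-free drift $-b$ of the same norm $\Lambda$, so its fundamental solution also obeys the Gaussian upper bound just proved; hence the Chapman--Kolmogorov identity $\Gamma(t,x;\tau,\xi)=\int\Gamma(t,x;s,y)\,\Gamma(s,y;\tau,\xi)\,dy$ for $\tau<s<t$, and the tail control $\int_{|y-\xi|>R\sqrt{s-\tau}}\Gamma(s,y;\tau,\xi)\,dy\to0$ as $R\to\infty$, are available in both arguments of $\Gamma$.

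The crux is the near-diagonal lower bound: there are $\varepsilon,c>0$ depending only on $n,\lambda,\Lambda$ with $\Gamma(t,x;\tau,\xi)\ge c(t-\tau)^{-n/2}$ whenever $|x-\xi|^{2}\le\varepsilon(t-\tau)$. I would prove it by Nash's scheme. Writing $u(s,\cdot)=\Gamma(s,\cdot;\tau,\xi)$, the divergence-free condition eliminates the non-cancelling drift term from the logarithmic identities --- for instance $\frac{d}{ds}\bigl(-\int u\log u\bigr)=\int|\nabla u|^{2}/u$, because $\int b\cdot\nabla u\,(1+\log u)\,dx=-\int(\nabla\cdot b)\,u\log u\,dx=0$ --- and then Nash's $G$-function argument (Poincaré on parabolic balls to absorb the terms produced by the cut-off, together with the standard splitting of the scale-invariant drift $b\in L^{\infty}(L^{n})$ into a small-$L^{n}$-norm piece and a bounded piece) gives the on-diagonal bound $\Gamma(t,\xi;\tau,\xi)\ge c(t-\tau)^{-n/2}$, which is then spread to $|x-\xi|^{2}\le\varepsilon(t-\tau)$ using Chapman--Kolmogorov and the tail estimate. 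Equivalently one may invoke the weak Harnack (local lower) estimate for nonnegative solutions, valid in this critical divergence-free regime with constants depending only on $n,\lambda,\Lambda$ --- the regularity theory referred to after Theorem~\ref{upperbound} and deducible from \cite{semenov2006regularity}. With this in hand the full Gaussian lower bound follows by Aronson's chaining: given $t>\tau$ and $x,\xi$, set $N=1+\lceil|x-\xi|^{2}/(\varepsilon(t-\tau))\rceil$, $\tau_{k}=\tau+\tfrac{k}{N}(t-\tau)$, $x_{k}=\xi+\tfrac{k}{N}(x-\xi)$, iterate Chapman--Kolmogorov and restrict every intermediate integration to the ball of radius $\sim\sqrt{(t-\tau)/N}$ about $x_{k}$; each factor is then $\gtrsim((t-\tau)/N)^{-n/2}$ and each ball has volume $\gtrsim((t-\tau)/N)^{n/2}$, so
\[
\Gamma(t,x;\tau,\xi)\ \gtrsim\ \Bigl(\tfrac{t-\tau}{N}\Bigr)^{-n/2}(c')^{N}\ \gtrsim\ \frac{1}{C(t-\tau)^{n/2}}\exp\Bigl(-\frac{C|x-\xi|^{2}}{t-\tau}\Bigr)
\]
by the choice of $N$. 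One then removes the smoothness assumption by mollifying $b$ (which keeps \eqref{eq: 1. divergence free} and does not increase $\Lambda$) and regularizing $a$ with the same ellipticity, and passing to the limit in the kernels, which is legitimate since every constant above depends only on $n,\lambda,\Lambda$.

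The step I expect to be the real obstacle is this near-diagonal lower bound with constants depending only on $n$, $\lambda$ and $\Lambda$. Since $L^{\infty}(0,T;L^{n}(\mathbb{R}^{n}))$ is exactly scale-invariant ($\gamma=1$), no smallness is available and one has to run Nash's argument carefully: justifying the integrations by parts (decay of $\Gamma$ and $\nabla\Gamma$, guaranteed by the a priori smoothness, with the bounds independent of it) and exploiting the divergence-free cancellations in full --- precisely the mechanism that is absent for a general drift of the same size, and the reason the estimate is false for generic $b\in L^{\infty}(L^{n})$. Everything else --- the upper bound, the conservation laws, the adjoint identity and the chaining --- is routine once this estimate is in place.
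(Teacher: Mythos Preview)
Your overall plan matches the paper's: the upper bound is read off from Theorem~\ref{upperbound} with $l=\infty$, $q=n$, hence $\mu=2$, $\nu=1$ and $m(t,x)=-|x|^{2}/(4At)$ exactly as you compute; the lower bound is obtained from a near-diagonal estimate by the Chapman--Kolmogorov chaining you describe, and the smoothness is removed by approximation at the end. The substantive difference is in how the near-diagonal lower bound is produced. The paper does not use the entropy $-\int u\log u$ (that identity underlies the \emph{upper} bound in Nash's scheme, not the lower one), nor any splitting of $b$ into a small-$L^{n}$ part and a bounded part, nor a Poincar\'e inequality on parabolic balls. Instead it works with the Gaussian-weighted quantity
\[
G(t,x)=\int_{\mathbb{R}^{n}}\ln\Gamma(1,x;1-t,\xi)\,\mu(d\xi),\qquad \mu(d\xi)=(2\pi)^{-n/2}e^{-|\xi|^{2}/2}\,d\xi,
\]
differentiates in the backward variable, and handles the drift term directly by H\"older: $\bigl|\int\langle b,\nabla_{\xi}\ln\Gamma\rangle\,\mu\bigr|\le\|b\|_{L^{n}(\mu)}\|\nabla_{\xi}\ln\Gamma\|_{L^{2}(\mu)}\le\Lambda\,\|\nabla_{\xi}\ln\Gamma\|_{L^{2}(\mu)}$, which is absorbed by Young's inequality into the elliptic term with a constant depending only on $\lambda,\Lambda$. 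The Gaussian Poincar\'e--Wirtinger inequality then converts $\|\nabla_{\xi}\ln\Gamma\|_{L^{2}(\mu)}^{2}$ into $\|\ln\Gamma-G\|_{L^{2}(\mu)}^{2}$; a second, essential, use of the \emph{upper} bound already proved gives a uniform lower bound for $\mu\{\ln\Gamma\ge -K\}$, which turns the $L^{2}(\mu)$ deviation into $G^{2}$ and yields a clean Riccati differential inequality $G'\ge -C_{1}+C_{2}G^{2}$. An elementary ODE lemma then gives $G(1,x)\ge -C$ for $x\in B(0,2)$, and Jensen plus Chapman--Kolmogorov give $\Gamma(2,x;0,\xi)\ge e^{-2C}$ on that ball. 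Since $\gamma=1$ is exactly the scale-invariant case, a rescaling (rather than a direct argument at every scale) yields $\Gamma(2t,x;0,\xi)\ge e^{-2C}t^{-n/2}$ for $|x-\xi|<4t^{1/2}$, and your chaining finishes.

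So your route is sound, but the paper's execution of the crux is both simpler and more self-contained than what you outline: no decomposition of $b$ is needed, the Poincar\'e step is the Gaussian one on $\mathbb{R}^{n}$, the divergence-free condition is not even used in the $G$-function differentiation (only in the energy/upper-bound part), and the upper bound is fed back into the argument to control the level set of $\ln\Gamma$ --- a step your sketch omits but which is what makes the Riccati inequality close.
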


This result shows that the divergence-free condition brings extra
regularity to weak solutions. For its discussion in other papers,
we refer to \cite{liskevich2004extra,semenov2006regularity,seregin2012divergence,zhang2004strong}.
Under the assumption that $b\in L^{\infty}(0,T;L^{n}(\rn))$, the
entropy condition (\ref{eq: seminov condition}) in \cite{semenov2006regularity}
is satisfied, and the above two-side Gaussian estimate can therefore
be deduced from Semenov's main results too. Here we give a simpler
and direct proof. 

In the supercritical case $\gamma\in(1,2)$, using upper bound (\ref{eq: 0 theorem 1 upper bound}),
we still can derive a lower bound. Actually, we know the fundamental
solution is conservative, and in fact, for fixed $t>\tau$, $\Gamma(t,x;\tau,\xi)$
is a probability density in $x$ (and in $\xi$ as well due to the
divergence-free condition). Because the upper bound decays exponentially,
we can find a radius $\tilde{R}(t)$ such that $\Gamma$ has a lower
bound inside the ball of radius $\tilde{R}(t)$. However, we can not
hope too much for the lower bound for supercritical case. Using current
techniques, we are able to establish the following theorem.
\begin{thm}
\label{lowerbound}Assume that $a$ and $b$ are smooth with bounded
derivatives. Suppose conditions (\ref{eq: 1. ellipticity}), (\ref{eq: 1. divergence free})
hold and $b\in L^{l}(0,T;L^{q}(\rn))$ for some $n\geq3$, $l\geq2$,
$q\geq2$ such that $1<\gamma<2$. For any $\kappa>0$, there is a
constant $C>0$ depending only on $\kappa,l,q,n,\lambda$ and $\Lambda=\Vert b\Vert_{L^{l}(0,T;L^{q}(\rn))}$
such that

\begin{equation}
\varGamma(t,x;0,\xi)\geq\exp\left[-Ct^{(\frac{n}{2}+1)(1-\gamma)}\left(\ln\frac{1}{t}\right)^{n+2}\right]\label{eq: 4.  general lowerbound step 2-1}
\end{equation}
for $x,\xi\in B(0,\kappa\tilde{R}(t))$ and small enough $t$, where
$\tilde{R}(s)=Cs^{(2-\gamma)/2}\ln\frac{1}{s}$ for $s>0$.
\end{thm}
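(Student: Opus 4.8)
The plan is to run the classical Nash--Aronson chaining argument, but with the time-dependent effective radius $\tilde R(t)$ replacing the usual $\sqrt t$, and to feed in the upper bound of Theorem \ref{upperbound} to start the chain. First I would extract from the conservativeness of $\varGamma(t,\cdot;0,\xi)$ (it is a probability density in $x$, and also in $\xi$ by the divergence-free condition) and from the exponential upper bound \eqref{eq: 0 theorem 1 upper bound} a statement of the form: there is a radius $\tilde R(t)\simeq t^{(2-\gamma)/2}\ln\frac1t$ such that $\int_{B(\xi,\tilde R(t))}\varGamma(t,x;0,\xi)\,dx\ge \tfrac12$. Indeed, the mass outside that ball is controlled by integrating $t^{-n/2}\exp(m(t,x-\xi))$; with $\mu,\nu$ as in Theorem \ref{upperbound}, optimizing the Legendre-type expression $m$ shows the tail decays like $\exp(-c\,|x-\xi|^{\beta}/(\text{power of }t))$, and the choice of $\tilde R(t)$ with the logarithmic correction is exactly what is needed to make this tail smaller than $\tfrac12$. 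From the $L^1$ mass bound plus the same pointwise upper bound (which gives an $L^\infty$ bound on $\varGamma$), a pigeonhole/averaging argument produces a point $y_0$ (depending on $t$) with $\varGamma(t,y_0;0,\xi)\ge c\,t^{-n/2}$ — the ``on-diagonal'' lower bound at a displaced base point.

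Next I would propagate this pointwise lower bound from $y_0$ to an arbitrary $x\in B(0,\kappa\tilde R(t))$ by a Harnack chain. The Chapman--Kolmogorov identity $\varGamma(t,x;0,\xi)=\int \varGamma(t,x;s,z)\,\varGamma(s,z;0,\xi)\,dz$ lets me split the time interval $[0,t]$ into $N=N(t)$ subintervals and iterate a one-step lower bound of the form $\varGamma(s_{k+1},\cdot;s_k,\cdot)\ge c\,(s_{k+1}-s_k)^{-n/2}$ on a ball whose radius is the per-step effective radius; concatenating $N$ such steps connects $\xi$ (or $y_0$) to $x$ provided $N\cdot(\text{step radius})\gtrsim \kappa\tilde R(t)$. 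Each concatenation costs a multiplicative constant $c\in(0,1)$, so after $N$ steps one gets $\varGamma(t,x;0,\xi)\ge c^{N}\, t^{-n/2}$. The number of steps $N$ needed is governed by how the step length must shrink to keep the drift's contribution controlled on each subinterval: because $\gamma>1$ the quantity $\|b\|_{L^l L^q}$ on a time-slab of length $\delta$ scales like $\delta^{(2-\gamma)/2+\cdots}$ unfavorably, forcing $\delta$ small, and a bookkeeping of the scaling (the same $\mu,\nu$ exponents as before) yields $N\simeq t^{(\frac n2+1)(\gamma-1)}(\ln\frac1t)^{\text{const}}$, with the power $n+2$ on the logarithm emerging from the precise width of the slabs and the number of balls in the spatial chain. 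Taking logs, $c^N t^{-n/2}=\exp(-N\ln\frac1c-\tfrac n2\ln\frac1t)$, and since the first term dominates one arrives at the claimed bound $\varGamma(t,x;0,\xi)\ge\exp[-Ct^{(\frac n2+1)(1-\gamma)}(\ln\frac1t)^{n+2}]$.

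The main obstacle, and the step I would spend the most care on, is the quantitative one-step lower bound on a short time-slab together with the precise accounting that turns it into the exponents $(\frac n2+1)(1-\gamma)$ and $n+2$. One cannot simply quote a scale-invariant Harnack inequality here: in the supercritical regime the Harnack constant degenerates as the scale shrinks, which is exactly why the final bound is sub-Gaussian rather than Gaussian. So I would instead prove directly, via a Moser iteration with cut-offs on a parabolic cylinder of the correct anisotropic dimensions (time-length $\delta$, space-radius $\sim\delta^{(2-\gamma)/2}$), an on-diagonal lower estimate whose constant degrades in a controlled, explicit way with $\delta$; the divergence-free condition is what keeps the energy estimate closed (no bad zeroth-order term, integration by parts kills $\int b\cdot\nabla u\,u$). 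Balancing ``step length small enough that each Moser estimate has a uniform constant'' against ``enough steps to traverse $\kappa\tilde R(t)$'' is where the logarithmic factors are born, and getting the exponent on $\ln\frac1t$ to be exactly $n+2$ (rather than something larger) requires choosing the slab geometry and the spatial chain length optimally. A secondary technical point is that all estimates must be justified first for the smooth $a,b$ (as assumed in the statement) with constants independent of the smoothness bounds, so that the argument is stable — but that is routine given Theorem \ref{upperbound}.
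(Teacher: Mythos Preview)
Your approach is genuinely different from the paper's, and as written it has a real gap.

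The paper does \emph{not} use a Harnack chain. It follows Nash's $G$-function method: set
\[
G_r(t,x)=\int_{\mathbb{R}^n}\ln\Gamma(T,x;T-t,\xi)\,\mu_r(d\xi),
\]
differentiate in $t$, use ellipticity and the divergence-free condition to get
\[
G_r(t_2,x)-G_r(t_1,x)\ge -\int_{t_1}^{t_2}\alpha(s)\,ds+\beta\int_{t_1}^{t_2}G_r(s,x)^2\,ds,
\]
where the quadratic term comes from the Poincar\'e--Wirtinger inequality for the Gaussian $\mu_r$ combined with the measure estimate $\mu_r\{\ln\Gamma\ge -1\}\ge C(T/r)^{n/2}\exp(-\pi\tilde R(T)^2/(Cr))$ (this last step is exactly where the upper bound, via Proposition~\ref{prop: 3. lower bound in a cone}, enters). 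The Riccati-type inequality is then solved by the elementary Lemma~\ref{lem: 2. generalized differential inequality}, giving an explicit lower bound on $G_r(T,x)$ with three competing terms in $r$. One applies Chapman--Kolmogorov and Jensen once to pass from $G_r$ to a pointwise bound on $\Gamma(2T,x;0,\xi)$, and finally \emph{optimizes over the Gaussian scale} $r$: the choice $r=\tilde R(T)^2$ makes the term $(r/T)^{n/2+1}\exp(\pi\tilde R(T)^2/(Cr))$ dominant and equal to $CT^{(\frac{n}{2}+1)(1-\gamma)}(\ln\tfrac1T)^{n+2}$. That is where the exponents come from---not from counting chain steps.

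The gap in your plan is the ``one-step lower bound''. Chaining reduces the problem to a lower bound on a short slab, but in the supercritical regime there is no small scale at which such a bound becomes uniform; the constant degenerates as $\delta\to 0$, so you are facing the same problem you started with, only worse. Your proposed fix, ``Moser iteration with cut-offs'', does not produce lower bounds: Moser's iteration controls $\sup u$ by integral norms, and the passage to $\inf u$ requires either a weak Harnack inequality (which, as you note, fails uniformly here) or precisely the Nash $G$-function machinery that the paper uses. Without an independent source for the one-step bound, the chain never starts, and your accounting for $N$ and the logarithmic exponent $n+2$ is a heuristic rather than a derivation. If you want to salvage a chaining argument you would first need to prove something like Lemma~\ref{lem: 4. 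Lower bound general} on each slab---at which point the chain is unnecessary, since a single application of Chapman--Kolmogorov plus Jensen already gives the result.
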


Such form of lower bounds also appear in \cite{barlow2012equivalence},
but in a rather different setting of Dirichlet forms. Although we
only deal with the lower bound in the cone $B(0,R(t))$ for small
$t>0$, by the Chapman-Kolmogorov equation, we can extend this lower
bound to the whole space. So this form of lower bound is actually
the essence of lower bounds in heat kernel estimates, and it determines
the local behavior of solutions to the parabolic equation. In all
cases the upper bound looks stronger than the lower bounds, and the
lower bound in the supercritical case still fails to yield Hölder
continuity of weak solutions. Therefore, in the supercritical case,
the regularity problem for this kind of linear parabolic equations
remains an open problem.

The rest of this paper is organized as follows. Section 2 explains
how equation (\ref{eq: 1. problem equation}) is related to its dual
equation and we prove some lemmas which will be used later. In Section
3, we will prove an Aronson type estimate for fundamental solutions
under a class of critical and supercritical conditions on $b$. In
particular, in the case $L^{\infty}(0,T;L^{n}(\rn))$, the estimate
will be Gaussian type. In Section 4, as an application of the Aronson
estimate, we can prove the existence of a unique Hölder continuous
weak solution for the critical case $L^{\infty}(0,T;L^{n}(\rn))$
by an approximation approach. In an appendix, we prove Nash's continuity
theorem for the completeness of the paper because we will need this
result in section 4. 

\section{Several technical facts}

One important feature of the divergence-free condition is that the
adjoint equation of (\ref{eq: 1. problem equation}) essentially has
the same form up to a sign. Hence their fundamental solutions share
essentially the same property. Consider equation (\ref{eq: 1. problem equation})
on $[0,T]\times\rn$ and denote its fundamental solution as $\Gamma(t,x;\tau,\xi)$
with $0\leq\tau<t\leq T$, $x,\xi\in\rn$. For the adjoint equation
\begin{equation}
\partial_{t}u(t,x)-\sum_{i,j=1}^{n}\partial_{x_{i}}(a_{ij}(T-t,x)\partial_{x_{j}}u(t,x))+\sum_{i=1}^{n}b_{i}(T-t,x)\partial_{x_{i}}u(t,x)=0\label{eq: 1. adjoint problem equation}
\end{equation}
and its fundamental solution $\Gamma_{T}^{*}(t,x;\tau,\xi)$, we have
$\Gamma(t,x;\tau,\xi)=\Gamma_{T}^{*}(T-\tau,\xi;T-t,x)$. 

We will need the following elementary facts in the proof of Aronson
type estimates.\textcolor{brown}{{} }The first one is the Poincaré-Wirtinger
inequality for the Gaussian measures \cite[Corollary 1.7.3]{bogachev1998gaussian}.
In the sequel, we shall write $C_{b}^{1}(\rn)$ to be the space of
functions with bounded continuous first order derivative. 
\begin{lem}
\label{lem: poincare inequality}Let $\mu$ be the standard Gaussian
measure on $\rn$, i.e. $\mu(dx)=\mu(x)dx$ with $\mu(x)=\frac{1}{(2\pi)^{n/2}}\exp\left(-\frac{\vert x\vert^{2}}{2}\right)$.
Then for every $p\geq1$ 
\begin{equation}
\int_{\rn}\vert f(x)-\bar{f}\vert^{p}\mu(dx)\leq M(p)(\frac{\pi}{2})^{p}\int_{\rn}\vert\nabla f(x)\vert^{p}\mu(dx),\label{eq: 4. poincare inequality 1}
\end{equation}
for any $f\in C_{b}^{1}(\rn)$, where $\bar{f}=\int_{\rn}f(x)\mu(dx)$
and 
\[
M(p)=\int_{-\infty}^{\infty}\vert\xi\vert^{p}\frac{1}{(2\pi)^{1/2}}\exp\left(-\frac{1}{2}\vert\xi\vert^{2}\right)d\xi.
\]
Further, setting $\mu_{r}(x)=\frac{1}{r^{n/2}}\exp\left(-\frac{\pi\vert x\vert^{2}}{r}\right)$
and $\bar{f}_{r}=\int_{\rn}f(x)\mu_{r}(dx)$, one has
\begin{equation}
\int_{\rn}\vert f(x)-\bar{f}_{r}\vert^{p}\mu_{r}(dx)\leq M(p)(\frac{\pi}{2})^{p}(\frac{r}{2\pi})^{p/2}\int_{\rn}\vert\nabla f(x)\vert^{p}\mu_{r}(dx).\label{eq: 4. poincare inequality 2}
\end{equation}
\end{lem}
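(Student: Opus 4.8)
The plan is to reduce everything to the unscaled inequality (\ref{eq: 4. poincare inequality 1}) and prove that one by an interpolation-and-rotation argument in $C_{b}^{1}(\rn)$. The scaled form (\ref{eq: 4. poincare inequality 2}) is then immediate: $\mu_{r}$ is the law of $\sqrt{r/(2\pi)}\,X$ when $X$ has law $\mu$, so applying (\ref{eq: 4. poincare inequality 1}) to $g(x)=f(\sqrt{r/(2\pi)}\,x)$ and using $\bar g=\bar f_{r}$ and $\nabla g(x)=\sqrt{r/(2\pi)}\,(\nabla f)(\sqrt{r/(2\pi)}\,x)$ produces exactly the extra factor $(r/(2\pi))^{p/2}$.

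For (\ref{eq: 4. poincare inequality 1}), fix $f\in C_{b}^{1}(\rn)$ and interpolate between $x$ and an independent Gaussian variable $y$: writing $f(x)-f(y)=\int_{0}^{1}\frac{d}{ds}f(\sqrt{s}\,x+\sqrt{1-s}\,y)\,ds$ and averaging over $y$ gives
\[
f(x)-\bar f=\int_{\rn}\int_{0}^{1}\nabla f\left(\sqrt{s}\,x+\sqrt{1-s}\,y\right)\cdot\left(\tfrac{x}{2\sqrt{s}}-\tfrac{y}{2\sqrt{1-s}}\right)ds\,\mu(dy),
\]
all integrals being absolutely convergent because $\nabla f$ is bounded and $\mu$ has finite moments, which also justifies Fubini below. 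Taking the $L^{p}(\mu(dx))$ norm and pulling the $ds$- and $\mu(dy)$-integrals outside the norm (Minkowski's integral inequality, together with Jensen's inequality for the concave map $t\mapsto t^{1/p}$) leaves, for each fixed $s\in(0,1)$, the quantity $\left(\int_{\rn}\int_{\rn}\left|\nabla f(\sqrt{s}x+\sqrt{1-s}y)\cdot(\tfrac{x}{2\sqrt{s}}-\tfrac{y}{2\sqrt{1-s}})\right|^{p}\mu(dx)\mu(dy)\right)^{1/p}$.

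The key step is the change of variables $z=\sqrt{s}\,x+\sqrt{1-s}\,y$, $w=-\sqrt{1-s}\,x+\sqrt{s}\,y$, whose $2\times2$ block matrix (with blocks $\sqrt{s}\,I$, $\sqrt{1-s}\,I$, $-\sqrt{1-s}\,I$, $\sqrt{s}\,I$) has orthonormal rows, hence is orthogonal on $\rn\times\rn$ and leaves $\mu(dx)\mu(dy)$ invariant; a one-line computation gives $\tfrac{x}{2\sqrt{s}}-\tfrac{y}{2\sqrt{1-s}}=-\tfrac{w}{2\sqrt{s(1-s)}}$. The $s$-term thus becomes $(2\sqrt{s(1-s)})^{-1}\left(\int_{\rn}\int_{\rn}|\nabla f(z)\cdot w|^{p}\mu(dz)\mu(dw)\right)^{1/p}$, and conditionally on $z$ the scalar $\nabla f(z)\cdot w$ is Gaussian with variance $|\nabla f(z)|^{2}$, so $\int_{\rn}|\nabla f(z)\cdot w|^{p}\mu(dw)=M(p)\,|\nabla f(z)|^{p}$ — this is precisely the mechanism by which the one-dimensional moment $M(p)$, rather than an $n$-dimensional one, enters the constant. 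Since $\int_{0}^{1}\frac{ds}{2\sqrt{s(1-s)}}=\frac{\pi}{2}$, integrating in $s$ yields $\|f-\bar f\|_{L^{p}(\mu)}\le\frac{\pi}{2}\,M(p)^{1/p}\,\|\nabla f\|_{L^{p}(\mu)}$, which is (\ref{eq: 4. poincare inequality 1}) after raising to the $p$-th power.

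I do not expect a genuine obstacle: this is the classical Gaussian Poincaré computation (Corollary 1.7.3 of \cite{bogachev1998gaussian}), so the only care needed is bookkeeping — checking that the $s^{-1/2}$ and $(1-s)^{-1/2}$ singularities at the endpoints are harmless (they are integrable in $s$, and $f\in C_{b}^{1}$ makes $\phi(s)=f(\sqrt{s}x+\sqrt{1-s}y)$ absolutely continuous with $\phi'\in L^{1}(0,1)$), that each application of Minkowski's integral inequality is to a genuine probability measure, and that the above linear map is orthogonal so the product Gaussian is preserved. If one prefers, the one-dimensional case could instead be done by hand and tensorized, but the interpolation argument handles all $n$ and all $p\ge1$ at once and yields the stated constant directly.
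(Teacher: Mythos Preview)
Your argument is correct and is precisely the standard interpolation-and-rotation proof of the Gaussian Poincaré--Wirtinger inequality. The paper itself does not prove this lemma at all; it simply quotes it as Corollary~1.7.3 of \cite{bogachev1998gaussian}, and what you have written is exactly the proof given there, so your approach coincides with the source the authors defer to. The scaling argument for (\ref{eq: 4. poincare inequality 2}) is likewise correct.
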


Actually, Lemma \ref{lem: poincare inequality} can be extended to
any function with weak derivative such that both sides of the Poincaré-Wirtinger
inequality are well defined using a truncation and approximation argument.
Also we will need the following lemma on a Riccati differential inequality.
\begin{lem}
\label{lem: 4. differential inequality}Suppose a non-positive valued
function $u$ is continuous and differentiable on $\left[\frac{T}{2},T\right]$,
where $T>0$ is a constant. If $u$ satisfies the Ricatti differential
inequality
\begin{equation}
u'(t)\geq-\alpha+\beta u(t)^{2}\label{eq:diffq1}
\end{equation}
for $t\in\left[\frac{T}{2},T\right]$, where $\alpha,\beta>0$ are
two constants, then 
\[
u(T)\geq\min\left\{ -\alpha T-2\sqrt{\frac{\alpha}{\beta}},-\frac{8}{3\beta T}\right\} .
\]
\end{lem}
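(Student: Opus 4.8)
The plan is to dichotomize according to whether the function $u$ ever attains the level $-2\sqrt{\alpha/\beta}$ on $[T/2,T]$, and to treat the two regimes by two completely different, but each entirely elementary, estimates. The threshold $-2\sqrt{\alpha/\beta}$ is chosen so that the regimes interlock: at any point where $u(t)\le -2\sqrt{\alpha/\beta}$ one has $\beta u(t)^2\ge 4\alpha$, so the quadratic term in $(\ref{eq:diffq1})$ dominates $\alpha$ and the inequality becomes super-quadratic; while if $u$ stays above that level, the right-hand side of $(\ref{eq:diffq1})$ is bounded below simply by $-\alpha$, which is all one needs there.

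\textbf{Case 1: there is $t_1\in[T/2,T]$ with $u(t_1)\ge -2\sqrt{\alpha/\beta}$.} Here I would simply discard the quadratic term: since $\beta u^2\ge 0$, $(\ref{eq:diffq1})$ gives $u'(s)\ge -\alpha$ for every $s$, hence $u(T)=u(t_1)+\int_{t_1}^{T}u'(s)\,ds\ge -2\sqrt{\alpha/\beta}-\alpha(T-t_1)\ge -2\sqrt{\alpha/\beta}-\alpha T$, which is the first term in the minimum. Note that this uses nothing about the behaviour of $u$ after $t_1$, so no oscillation phenomena need to be analysed.

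\textbf{Case 2: $u(t)<-2\sqrt{\alpha/\beta}$ for every $t\in[T/2,T]$.} Then $u(t)^2>4\alpha/\beta$, so $\alpha<\frac14\beta u(t)^2$, and $(\ref{eq:diffq1})$ upgrades to $u'(t)>\frac34\beta u(t)^2$. Since $u$ is continuous, negative, and bounded away from $0$ on $[T/2,T]$, the function $-1/u$ is $C^1$ there with $(-1/u)'=u'/u^2>\frac34\beta$; integrating over $[T/2,T]$ yields $-\frac{1}{u(T)}+\frac{1}{u(T/2)}>\frac34\beta\cdot\frac{T}{2}=\frac38\beta T$. Because $u(T/2)<0$ the term $\frac{1}{u(T/2)}$ is negative, so $-\frac{1}{u(T)}>\frac38\beta T>0$, and since $u(T)<0$ this rearranges to $u(T)>-\frac{8}{3\beta T}$, the second term in the minimum.

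Combining the two cases gives $u(T)\ge\min\{-\alpha T-2\sqrt{\alpha/\beta},\,-\frac{8}{3\beta T}\}$, as claimed. I do not expect any genuine obstacle: there is no real ODE-comparison difficulty, and in particular one never has to understand how $u$ transitions between the two regimes, which is exactly what makes the case split clean. The only points needing a little care are picking the crossover level so that the two estimates cover all of $[T/2,T]$ between them, and tracking signs correctly when passing to reciprocals of the negative quantities $u(T)$ and $u(T/2)$ in Case 2.
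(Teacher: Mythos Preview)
Your proof is correct and follows essentially the same approach as the paper: a dichotomy at the level $-2\sqrt{\alpha/\beta}$, using the trivial bound $u'\ge-\alpha$ in one case and the super-quadratic inequality $u'\ge\frac34\beta u^2$ in the other. The only cosmetic difference is that the paper's case split is phrased as ``either $u(T)\ge-\alpha T-2\sqrt{\alpha/\beta}$ or not'' and then shows in the second case that $u(t)\le-2\sqrt{\alpha/\beta}$ on the whole interval, whereas you split directly on whether $u$ ever reaches $-2\sqrt{\alpha/\beta}$; the two are logically equivalent and the remaining computations coincide.
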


\begin{proof}
If $u(T)\geq-\alpha T-2\sqrt{\frac{\alpha}{\beta}}$, then the proof
is done. Otherwise, integrating the differential inequality (\ref{eq:diffq1})
from $T/2$ to $T$, we have $u(T)-u(t)\geq-\frac{\alpha T}{2}$ for
any $t\in[\frac{T}{2},T]$. In other words, we have
\[
u(t)\leq u(T)+\frac{\alpha T}{2}\leq-\alpha T-2\sqrt{\frac{\alpha}{\beta}}+\frac{\alpha T}{2}
\]
which in turn yields that $u(t)\leq-2\sqrt{\frac{\alpha}{\beta}}$.
Notice that $u(t)$ is negative on $\left[\frac{T}{2},T\right]$ and
therefore $u(t)^{2}\geq4\frac{\alpha}{\beta}$. Hence differential
inequality (\ref{eq:diffq1}) implies that 
\[
u'(t)\geq\beta\left(-\frac{\alpha}{\beta}+u(t)^{2}\right)\geq\beta\left(-\frac{1}{4}u(t)^{2}+u(t)^{2}\right)=\frac{3\beta}{4}u(t)^{2}
\]
for every $t\in[\frac{T}{2},T]$. Dividing both sides by $u(t)^{2}$
and integrating from $t\in\left[\frac{T}{2},T\right]$ to $T$, we
obtain that
\[
\frac{1}{u(T)}\leq-\frac{3\beta T}{8}+\frac{1}{u(t)}\leq-\frac{3\beta T}{8}.
\]
In particular $u(T)\geq-\frac{8}{3\beta T}$ and the proof is complete.
\end{proof}
If $\alpha$ is a function depending on $t$, which is integrable
and non-negative, then we still can derive a lower bound on $u$. 
\begin{lem}
\label{lem: 2. generalized differential inequality}Let $T>0$. Suppose
a non-positive function $u$ is continuous on $[\frac{T}{2},T]$,
and satisfies the following integral inequality

\begin{equation}
u(t_{2})-u(t_{1})\geq\int_{t_{1}}^{t_{2}}\left(-\alpha(t)+\beta u(t)^{2}\right)dt,\qquad\mbox{for all }\frac{T}{2}\leq t_{1}<t_{2}\leq T,\label{eq:diffq2}
\end{equation}
where $\alpha$ is non-negative and integrable in $[\frac{T}{2},T]$,
and $\beta>0$ is a constant, then 
\[
u(T)\geq-\int_{\frac{T}{2}}^{T}\alpha(t)dt-C\beta^{-1}T^{-1}
\]
for some $C>0$.
\end{lem}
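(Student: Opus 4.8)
The plan is to handle the time\-dependent term $\alpha$ by absorbing a primitive of it into $u$, reducing matters to the situation of Lemma~\ref{lem: 4. differential inequality} but in integrated form (since $u$ is only assumed continuous, not differentiable). First I would set
\[
v(t):=u(t)+\int_{T/2}^{t}\alpha(s)\,ds,\qquad t\in[\tfrac T2,T].
\]
As $\alpha\in L^{1}([\tfrac T2,T])$, the map $t\mapsto\int_{T/2}^{t}\alpha$ is continuous, so $v$ is continuous; and subtracting $\int_{t_{1}}^{t_{2}}\alpha(t)\,dt$ from both sides of (\ref{eq:diffq2}) turns that inequality into
\[
v(t_{2})-v(t_{1})\ \ge\ \beta\int_{t_{1}}^{t_{2}}u(s)^{2}\,ds\ \ge\ 0\qquad(\tfrac T2\le t_{1}<t_{2}\le T),
\]
so $v$ is non-decreasing. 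Since $v(T)=u(T)+\int_{T/2}^{T}\alpha(s)\,ds$, it then suffices to prove $v(T)\ge-2\,\beta^{-1}T^{-1}$, which gives the statement with $C=2$.

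To establish that bound I would argue by contradiction: suppose $v(T)<-2\beta^{-1}T^{-1}<0$. By monotonicity $v(t)\le v(T)<0$ on all of $[\tfrac T2,T]$, and continuity on a compact interval makes $v$ bounded, so $v$ is bounded and bounded away from $0$ there. Moreover $u(t)=v(t)-\int_{T/2}^{t}\alpha\le v(t)<0$, hence $u(t)^{2}\ge v(t)^{2}$, and the displayed inequality upgrades to the self-improving form
\[
v(t_{2})-v(t_{1})\ \ge\ \beta\int_{t_{1}}^{t_{2}}v(s)^{2}\,ds\qquad(\tfrac T2\le t_{1}<t_{2}\le T).
\]

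Next I would introduce $h(t):=-1/v(t)$; since $v<0$ is non-decreasing, continuous, and bounded away from $0$, the function $h$ is positive, continuous and non-decreasing on $[\tfrac T2,T]$. At every $t$ where $v$ is differentiable — that is, almost everywhere, $v$ being monotone — dividing the last inequality (with $t_{1}=t$) by $t_{2}-t$ and letting $t_{2}\to t^{+}$, using continuity of $s\mapsto v(s)^{2}$, gives $v'(t)\ge\beta v(t)^{2}$, whence $h'(t)=v'(t)/v(t)^{2}\ge\beta$. Because $h$ is monotone, Lebesgue's theorem yields $h(T)-h(\tfrac T2)\ge\int_{T/2}^{T}h'(t)\,dt\ge\beta T/2$, and since $h(\tfrac T2)=-1/v(\tfrac T2)>0$ this forces $h(T)>\beta T/2$, i.e.\ $-1/v(T)>\beta T/2$, i.e.\ $v(T)>-2\beta^{-1}T^{-1}$ — contradicting the assumption. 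Hence $v(T)\ge-2\beta^{-1}T^{-1}$, and the lemma follows.

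The step I expect to be the main obstacle is the last one: passing from the \emph{integral} inequality for $v$ to the Riccati conclusion when $v$ is a priori only continuous, so that the pointwise argument of Lemma~\ref{lem: 4. differential inequality} is unavailable and $v$ need not even be absolutely continuous. The device above — substituting $h=-1/v$ and exploiting the \emph{monotone}-function estimate $\int_{T/2}^{T}h'\le h(T)-h(\tfrac T2)$, which happens to point in the favourable direction here — is what makes this rigorous; an alternative would be a Gronwall/comparison argument against the explicit solution of $\dot y=\beta y^{2}$ with $y(\tfrac T2)=v(\tfrac T2)$, but that needs a little extra care to run purely from the integral inequality.
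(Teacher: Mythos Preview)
Your proof is correct, and it takes a genuinely different route from the paper's. The paper argues directly on $u$: assuming $u(T)<-C_{1}$ with $C_{1}>\int_{T/2}^{T}\alpha$, it first shows $u(t)\le-C_{3}$ on $[\tfrac T2,T]$, then bootstraps the integral inequality $u(t)<-\int_{t}^{T}\beta u(s)^{2}\,ds$ repeatedly to produce bounds of the form $u(t)\le -C\bigl[\beta C_{3}(T-t)\bigr]^{2^{m}}$, which blow up at $t=T/2$ once $\beta C_{3}T$ is large enough; this forces $C_{3}\le C\beta^{-1}T^{-1}$ with a constant coming from an infinite product.

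Your approach is cleaner: absorbing a primitive of $\alpha$ into $u$ to obtain the monotone function $v$, then passing to $h=-1/v$, linearises the Riccati structure and lets you finish with a single application of Lebesgue's theorem for monotone functions. The delicate point --- that $v$ need only be monotone, not absolutely continuous, so the fundamental theorem of calculus may fail --- you handle correctly by noting that the inequality $\int h'\le h(T)-h(\tfrac T2)$ goes in the favourable direction. As a bonus you get the explicit constant $C=2$, whereas the paper's iteration leaves $C$ implicit. The paper's argument, on the other hand, avoids any appeal to differentiability of monotone functions and stays entirely at the level of integral inequalities, which some readers may find more elementary.
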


\begin{proof}
Let $C_{1}>C_{2}$ be a constant to be determined later, where $C_{2}=\int_{\frac{T}{2}}^{T}\alpha(t)dt$.
Suppose $u(T)<-C_{1}$. Then for any $t\in\left[\frac{T}{2},T\right]$
it holds that 
\[
u(t)\leq u(T)+\int_{t}^{T}\alpha(s)ds<-C_{1}+C_{2}=:-C_{3},
\]
where $C_{3}>0$ since $C_{1}>C_{2}$. So $u(t)$ is negative, which
implies that $u(t)^{2}\geq C_{3}^{2}$. Now we integrate (\ref{eq:diffq2})
to deduce that
\begin{equation}
u(t)\leq u(T)+\int_{t}^{T}\alpha(s)ds-\int_{t}^{T}\beta u(s)^{2}ds<-\int_{t}^{T}\beta u(s)^{2}ds,\label{eq: 4. inequality lemma 1}
\end{equation}
which implies that $u(t)\leq-\beta C_{3}^{2}(T-t)$ for all $t\in\left[\frac{T}{2},T\right]$.
Repeating the procedure of using the old bound of $u(t)$ and (\ref{eq: 4. inequality lemma 1})
to obtain a new bound, we deduce that
\[
u(t)\leq-\beta^{2^{m}-1}C_{3}^{2^{m}}(T-t)^{2^{m}-1}\prod_{k=1}^{m}(\frac{1}{2^{k}-1})^{2^{m-k}}\leq-C\left[\beta C_{3}(T-t)\prod_{k=1}^{m}(2^{-\frac{k}{2^{k}}})\right]^{2^{m}}
\]
after $m$ times. Since $\inf_{m}\prod_{k=1}^{m}(2^{-\frac{k}{2^{k}}})=C_{4}>0$,
the right-hand side can be arbitrarily small at time $\frac{T}{2}$
if $\beta C_{3}(T-\frac{T}{2})C_{4}>1$, which contradicts to the
fact that $u$ is finite. So if we take $C_{3}>\frac{2}{\beta TC_{4}}$,
i.e. $C_{1}=C_{2}+C\beta^{-1}T^{-1}$ for some constant $C$, then
$u(T)\geq-C_{1}$.
\end{proof}

\section{Aronson type estimates }

In this section, we will prove one of our main results, which is an
\emph{a priori} estimate of the fundamental solution to equation (\ref{eq: 1. problem equation}).
We will assume that $a\in C^{\infty}([0,T]\times\rn)$ and $b\in C^{\infty}([0,T],C_{0}^{\infty}(\rn))$
so that there exists a unique regular fundamental solution. 

\subsection{The upper bound}

The idea here is to estimate the $h$-transform of the fundamental
solution, which was first used by E. B. Davies \cite{davies1987explicit}.
But here we will use Moser's approach instead of Nash's to prove the
upper bound because it has the potential of applicability to more
general cases where $b\in L^{l}(0,T;L^{q}(\rn))$ satisfying
\begin{equation}
1\leq\frac{2}{l}+\frac{n}{q}<2\tag*{\textbf{A}}\label{eq: 3. Critical assumption}
\end{equation}
for $n\geq3$, $l>1$ and $q>\frac{n}{2}$. 

Given a function $\psi$ on $\rn$ which is smooth and has bounded
derivatives, we define the operator 
\begin{align*}
A_{t}^{\psi}u(x) & =\exp(-\psi(x))\sum_{i,j=1}^{n}\partial_{x_{i}}(a_{ij}(t,x)\partial_{x_{j}}[\exp(\psi(x))u(x)])\\
 & \quad-\exp(-\psi(x))\sum_{i=1}^{n}b_{i}(t,x)\partial_{x_{i}}[\exp(\psi(x))u(x)].
\end{align*}
Then its corresponding fundamental solution is 
\[
\Gamma^{\psi}(t,x;\tau,\xi)=\exp(-\psi(x))\Gamma(t,x;\tau,\xi)\exp(\psi(\xi)).
\]
For any $f\in C_{0}^{\infty}(\rn)$, we define a linear operator $\Gamma_{\tau,t}^{\psi}:C_{0}^{\infty}(\rn)\rightarrow L^{2}(\rn)$
as
\begin{align*}
\Gamma_{\tau,t}^{\psi}f(x) & =\int_{\rn}f(\xi)\Gamma^{\psi}(t,x;\tau,\xi)\,d\xi\\
 & =\int_{\rn}f(\xi)\exp(-\psi(x))\Gamma(t,x;\tau,\xi)\exp(\psi(\xi))\,d\xi.
\end{align*}
It is easy to observe that the adjoint operator of $\Gamma_{\tau,t}^{\psi}$
can be identified as the following linear operator 
\[
\Gamma_{\tau,t}^{\psi\perp}f(x)=\int_{\rn}f(\xi)\exp(-\psi(\xi))\Gamma(\tau,\xi;t,x)\exp(\psi(x))\,d\xi,
\]
and they satisfy 
\begin{equation}
\langle\Gamma_{\tau,t}^{\psi}f,g\rangle_{L^{2}(\rn)}=\langle f,\Gamma_{\tau,t}^{\psi\perp}g\rangle_{L^{2}(\rn)}.\label{eq: 3. dual equation}
\end{equation}
\begin{lem}
\label{lem5}Suppose $(a,b)$ satisfies conditions (\ref{eq: 1. ellipticity}),
(\ref{eq: 1. divergence free}) and (\ref{eq: 3. Critical assumption}).
Given $\alpha\in\rn$, and $\psi(x)=\alpha\cdot x$, set
\[
f_{t}(x)=\Gamma_{0,t}^{\psi}f(x)=\int_{\rn}f(\xi)\Gamma^{\psi}(t,x;0,\xi)\,d\xi
\]
for $f\in C_{0}^{\infty}(\rn)$. Then there exists a constant $C$
depending on $(n,l,q)$ such that 
\begin{equation}
\Vert f_{t}\Vert_{L_{x}^{2}}^{2}\leq\exp\left(\frac{2\vert\alpha\vert^{2}}{\lambda}t+2C\lambda^{-\frac{1+\theta}{1-\theta}}\vert\alpha\vert^{\frac{2}{1-\theta}}\Lambda^{\mu}t^{\nu}\right)\cdot\Vert f\Vert_{L_{x}^{2}}^{2},\label{eq: energy estimate -1}
\end{equation}
where $\theta=\frac{n}{q}-1$, $\mu=\frac{2}{2-\gamma+\frac{2}{l}}$,
$\nu=\frac{2-\gamma}{2-\gamma+\frac{2}{l}}$, $\gamma=\frac{2}{l}+\frac{n}{q}$
and $\Lambda=\Vert b\Vert_{L^{l}(0,T;L^{q}(\rn))}$. 
\end{lem}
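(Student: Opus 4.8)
The plan is to derive a differential inequality for the energy $E(t)=\|f_t\|_{L_x^2}^2$ and integrate it via Grönwall. First I would write down the equation satisfied by $f_t=\Gamma_{0,t}^\psi f$. Since $\Gamma^\psi$ is the fundamental solution of $A_t^\psi$, we have $\partial_t f_t=A_t^\psi f_t$, and with $\psi(x)=\alpha\cdot x$ the conjugation is explicit: $A_t^\psi u = e^{-\alpha\cdot x}\,\partial_i(a_{ij}\partial_j(e^{\alpha\cdot x}u)) - e^{-\alpha\cdot x}b_i\partial_i(e^{\alpha\cdot x}u)$. Expanding the derivatives produces $A_t^\psi u = \partial_i(a_{ij}\partial_j u) + (\text{first-order terms in }\alpha) + (\text{zeroth-order terms in }|\alpha|^2) - b\cdot\nabla u - (b\cdot\alpha)u$. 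Then I compute $\tfrac{d}{dt}E(t) = 2\langle \partial_t f_t, f_t\rangle = 2\langle A_t^\psi f_t, f_t\rangle$. The leading term integrates by parts to $-2\int a_{ij}\partial_i f_t\partial_j f_t \le -2\lambda\|\nabla f_t\|_{L_x^2}^2$ by ellipticity \ref{eq: 1. ellipticity}. The genuinely drift-free first-order terms in $\alpha$ (of the form $\int \alpha_i a_{ij}\partial_j f_t\, f_t$) are handled by Cauchy--Schwarz and absorbed: $2|\alpha|\lambda^{-1}\|\nabla f_t\|\|f_t\| \le \lambda\|\nabla f_t\|^2 + \lambda^{-3}|\alpha|^2\|f_t\|^2$, while the $|\alpha|^2$ zeroth-order term contributes $\lesssim \lambda^{-1}|\alpha|^2 E(t)$.

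The crucial term is the one coming from the drift. Here the divergence-free condition \ref{eq: 1. divergence free} is essential: $\langle b\cdot\nabla f_t, f_t\rangle = \tfrac12\int b\cdot\nabla(f_t^2) = -\tfrac12\int (\divg b) f_t^2 = 0$. So the $b\cdot\nabla f_t$ piece vanishes entirely, and what remains is only $-2\int (b\cdot\alpha) f_t^2\,dx$, which is bounded by $2|\alpha|\int |b| f_t^2\,dx$. This is the term I must control using only $\|b(t,\cdot)\|_{L_x^q}$ together with $\|\nabla f_t\|_{L_x^2}$ and $\|f_t\|_{L_x^2}$. The standard tool is Hölder followed by Gagliardo--Nirenberg: with $q>n/2$, writing $1/q = (1-\theta)/2^* + \theta'/\ldots$ appropriately, one gets $\int|b|f_t^2 \le \|b\|_{L_x^q}\|f_t^2\|_{L_x^{q'}} = \|b\|_{L_x^q}\|f_t\|_{L_x^{2q'}}^2 \le C\|b\|_{L_x^q}\|\nabla f_t\|_{L_x^2}^{2\theta}\|f_t\|_{L_x^2}^{2(1-\theta)}$ where $\theta = n/q - 1$ is exactly the interpolation exponent forced by scaling (note $q>n/2 \iff \theta<1$, and $q\ge n$ would give $\theta\le 0$, covered trivially). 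Then I apply Young's inequality with exponents $(1/\theta, 1/(1-\theta))$ to split $2|\alpha|\cdot C\|b\|_{L_x^q}\|\nabla f_t\|^{2\theta}\|f_t\|^{2(1-\theta)}$ into $\lambda\|\nabla f_t\|^2$ (absorbed into the ellipticity term) plus a term of the form $C\lambda^{-\theta/(1-\theta)}(|\alpha|\|b\|_{L_x^q})^{1/(1-\theta)}\|f_t\|_{L_x^2}^2$.

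Collecting everything, the $\|\nabla f_t\|^2$ contributions cancel (choosing the Young/Cauchy--Schwarz constants so the total coefficient of $\|\nabla f_t\|^2$ is $\le -2\lambda + \lambda + \lambda = 0$, or slightly negative), leaving
\[
E'(t) \le \left(\tfrac{2|\alpha|^2}{\lambda} + C\lambda^{-\frac{1+\theta}{1-\theta}}|\alpha|^{\frac{2}{1-\theta}}\|b(t,\cdot)\|_{L_x^q}^{\frac{2}{1-\theta}}\right)E(t).
\]
Grönwall then gives $E(t) \le E(0)\exp\!\big(\tfrac{2|\alpha|^2}{\lambda}t + C\lambda^{-\frac{1+\theta}{1-\theta}}|\alpha|^{\frac{2}{1-\theta}}\int_0^t\|b(s,\cdot)\|_{L_x^q}^{\frac{2}{1-\theta}}ds\big)$. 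The final step is to bound $\int_0^t \|b(s,\cdot)\|_{L_x^q}^{2/(1-\theta)}ds$ by Hölder in time: since $b\in L_t^l L_x^q$, we get $\int_0^t\|b(s)\|_{L_x^q}^{2/(1-\theta)}ds \le \big(\int_0^t\|b(s)\|_{L_x^q}^l ds\big)^{\frac{2}{(1-\theta)l}} t^{1 - \frac{2}{(1-\theta)l}} \le \Lambda^{\frac{2}{(1-\theta)l}} t^{1-\frac{2}{(1-\theta)l}}$, and a short computation checks that with $\theta = n/q-1$ the exponents match $\tfrac{2}{(1-\theta)l} \cdot \tfrac{1-\theta}{?}$... explicitly $\tfrac{2}{1-\theta}\cdot\tfrac{1}{l}$ combined with the outer exponent must reproduce $\mu = \frac{2}{2-\gamma+2/l}$ and $\nu = \frac{2-\gamma}{2-\gamma+2/l}$; indeed $1-\theta = 2 - n/q = 2 - (\gamma - 2/l)$, so $\tfrac{2}{1-\theta} = \tfrac{2}{2-\gamma+2/l} = \mu$, and one verifies $\mu/l + \nu = 1$ gives the time exponent $\nu$. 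The main obstacle I anticipate is purely bookkeeping: carefully tracking the $\lambda$-powers through the Young inequality and confirming the Gagliardo--Nirenberg exponent is admissible (which requires $\theta\in[0,1)$, i.e. $n/2 < q$, and in dimension $n\ge 3$ also $\theta \le 1$ automatically), together with the arithmetic that turns the time-Hölder exponents into exactly $\mu$ and $\nu$. No step is conceptually hard once the divergence-free cancellation is used.
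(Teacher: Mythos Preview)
Your approach is exactly the paper's: differentiate $\|f_t\|_{L^2}^2$, integrate by parts, exploit the divergence-free cancellation $\int (b\cdot\nabla f_t)f_t=0$, control the remaining $\int (b\cdot\alpha)f_t^2$ by H\"older--Sobolev--Young, then finish with Gr\"onwall and H\"older in time. Two pieces of bookkeeping are off, though.

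First, the two first-order cross terms of the form $\int \alpha_i a_{ij}(\partial_j f_t)f_t\,dx$ arise with opposite signs (one from differentiating $e^{\alpha\cdot x}$ inside and one from differentiating $e^{-\alpha\cdot x}$ outside), and since $a$ is symmetric they cancel exactly. You estimate them by Cauchy--Schwarz instead; that still closes, but it costs an extra $\lambda^{-3}|\alpha|^2\|f_t\|^2$ and prevents you from recovering the precise coefficient $2|\alpha|^2/\lambda$ in the exponent as stated.

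Second, and more seriously, your Gagliardo--Nirenberg exponent is wrong. With $2q'=2q/(q-1)$ one has
\[
\|f_t\|_{L^{2q'}}^2\le C\|\nabla f_t\|_{L^2}^{n/q}\|f_t\|_{L^2}^{2-n/q}=C\|\nabla f_t\|_{L^2}^{1+\theta}\|f_t\|_{L^2}^{1-\theta},
\]
not $\|\nabla f_t\|^{2\theta}\|f_t\|^{2(1-\theta)}$; indeed for $q>n$ your exponent $2\theta$ is negative. The correct Young split, with conjugate exponents $\tfrac{2}{1+\theta}$ and $\tfrac{2}{1-\theta}$, then produces the term $C\lambda^{-(1+\theta)/(1-\theta)}(|\alpha|\,\|b\|_{L_x^q})^{2/(1-\theta)}\|f_t\|_{L^2}^2$, which is the $|\alpha|^{2/(1-\theta)}$ appearing in the statement. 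Your own Young step yields exponent $1/(1-\theta)$, inconsistent with the differential inequality you then write down. Once this exponent is fixed, everything else---including your time-H\"older check that $2/(1-\theta)=\mu$ and $1-\mu/l=\nu$---goes through as you describe.
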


\begin{proof}
We begin with the fact that $f_{t}$ satisfies 
\[
\frac{d}{dt}\Vert f_{t}\Vert_{L_{x}^{2}}^{2}=2\langle A_{t}^{\psi}f_{t},f_{t}\rangle_{L^{2}(\mathbb{R}^{n})}.
\]
It follows that
\begin{align*}
 & \frac{1}{2}\left(\Vert f_{t}\Vert_{L_{x}^{2}}^{2}-\Vert f\Vert_{L_{x}^{2}}^{2}\right)=\frac{1}{2}\int_{0}^{t}\frac{d}{ds}\Vert f_{s}\Vert_{L_{x}^{2}}^{2}\;ds=\int_{0}^{t}\int_{\mathbb{R}^{n}}A_{s}^{\psi}f_{s}(x)\cdot f_{s}(x)\;dxds\\
 & \quad=-\int_{0}^{t}\int_{\mathbb{R}^{n}}\sum_{i,j=1}^{n}a_{ij}(s,x)\partial_{x_{j}}[\exp(\psi(x))f_{s}(x)]\partial_{x_{i}}[\exp(-\psi(x))f_{s}(x)]\;dxds\\
 & \quad\quad-\int_{0}^{t}\int_{\mathbb{R}^{n}}\sum_{i=1}^{n}b_{i}(s,x)\partial_{x_{i}}[\exp(\psi(x))f_{s}(x)][\exp(-\psi(x))f_{s}(x)]\;dxds\\
 & \quad=\int_{0}^{t}\int_{\mathbb{R}^{n}}\langle\alpha\cdot a(s,x),\alpha\rangle f_{s}^{2}(x)\;dxds-\int_{0}^{t}\int_{\mathbb{R}^{n}}\langle\nabla f_{s}(x)\cdot a(s,x),\nabla f_{s}(x)\rangle\;dxds\\
 & \quad\quad-\int_{0}^{t}\int_{\mathbb{R}^{n}}\langle\alpha\cdot a(s,x),\nabla f_{s}(x)\rangle f_{s}(x)\;dxds+\int_{0}^{t}\int_{\mathbb{R}^{n}}\langle\nabla f_{s}(x)\cdot a(s,x),\alpha\rangle f_{s}(x)\;dxds\\
 & \quad\quad-\int_{0}^{t}\int_{\mathbb{R}^{n}}\langle b(s,x),\alpha\rangle f_{s}^{2}(x)\;dxds-\int_{0}^{t}\int_{\mathbb{R}^{n}}\langle b(s,x),\nabla f_{s}(x)\rangle f_{s}(x)\;dxds.
\end{align*}
Since $b$ is divergence-free, we have for any $s$ that 
\[
\int_{\mathbb{R}^{n}}\langle b(s,x),\nabla f_{s}(x)\rangle f_{s}(x)\;dx=0.
\]
The third and fourth terms cancel each other and condition (\ref{eq: 1. ellipticity})
gives 
\[
\begin{aligned} & \frac{1}{2}\left(\Vert f_{t}\Vert_{L_{x}^{2}}^{2}-\Vert f\Vert_{L_{x}^{2}}^{2}\right)\\
 & \quad=\int_{0}^{t}\int_{\mathbb{R}^{n}}\langle\alpha\cdot a(s,x),\alpha\rangle f_{s}^{2}(x)\;dxds-\int_{0}^{t}\int_{\mathbb{R}^{n}}\langle\nabla f_{s}(x)\cdot a(s,x),\nabla f_{s}(x)\rangle\;dxds\\
 & \quad\quad-\int_{0}^{t}\int_{\mathbb{R}^{n}}\langle b(s,x),\alpha\rangle f_{s}^{2}(x)\;dxds\\
 & \quad\leq\int_{0}^{t}\frac{\vert\alpha\vert^{2}}{\lambda}\Vert f_{s}\Vert_{L_{x}^{2}}^{2}\;ds-\int_{0}^{t}\lambda\Vert\nabla f_{s}\Vert_{L_{x}^{2}}^{2}\;ds-\int_{0}^{t}\int_{\mathbb{R}^{n}}\langle b(s,x),\alpha\rangle f_{s}^{2}(x)\;dxds.
\end{aligned}
\]
For the last term, one obtains the following estimate

\[
\begin{aligned}\left\vert \int_{0}^{t}\int_{\mathbb{R}^{n}}\langle b(s,x),\alpha\rangle f_{s}^{2}(x)\;dxds\right\vert  & \leq\int_{0}^{t}\vert\alpha\vert\Vert b(s,\cdot)\Vert_{L_{x}^{q}}\Vert f_{s}^{1+\theta}\Vert_{L_{x}^{r_{1}}}\Vert f_{s}^{1-\theta}\Vert_{L_{x}^{r_{2}}}\;ds\\
 & =\int_{0}^{t}\vert\alpha\vert\Vert b(s,\cdot)\Vert_{L_{x}^{q}}\Vert f_{s}\Vert_{L_{x}^{(1+\theta)r_{1}}}^{1+\theta}\Vert f_{s}\Vert_{L_{x}^{(1-\theta)r_{2}}}^{1-\theta}\;ds,
\end{aligned}
\]
where 
\[
\theta=\frac{n}{q}-1,\qquad(1+\theta)r_{1}=\frac{2n}{n-2},\qquad(1-\theta)r_{2}=2.
\]
By Sobolev's embedding and Young's inequality, we can further control
it as follows

\[
\begin{aligned} & \left\vert \int_{0}^{t}\int_{\mathbb{R}^{n}}\langle b(s,x),\alpha\rangle f_{s}^{2}(x)\;dxds\right\vert \\
 & \quad\leq\int_{0}^{t}C\vert\alpha\vert\Vert b(s,\cdot)\Vert_{L_{x}^{q}}\Vert f_{s}\Vert_{L_{x}^{2}}^{1-\theta}\Vert\nabla f_{s}\Vert_{L_{x}^{2}}^{1+\theta}\;ds\\
 & \quad=\int_{0}^{t}((\frac{2}{\lambda})^{\frac{1+\theta}{2}}C\vert\alpha\vert\Vert b(s,\cdot)\Vert_{L_{x}^{q}}\Vert f_{s}\Vert_{L_{x}^{2}}^{1-\theta})((\frac{\lambda}{2})^{\frac{1+\theta}{2}}\Vert\nabla f_{s}\Vert_{L_{x}^{2}}^{1+\theta})\;ds\\
 & \quad\leq\int_{0}^{t}\frac{1-\theta}{2}(\frac{2}{\lambda})^{\frac{1+\theta}{1-\theta}}(C\vert\alpha\vert\Vert b(s,\cdot)\Vert_{L_{x}^{q}})^{\frac{2}{1-\theta}}\Vert f_{s}\Vert_{L_{x}^{2}}^{2}+\frac{1+\theta}{2}\frac{\lambda}{2}\Vert\nabla f_{s}\Vert_{L_{x}^{2}}^{2}\;ds\\
 & \quad\leq\int_{0}^{t}C(\frac{1}{\lambda})^{\frac{1+\theta}{1-\theta}}(\vert\alpha\vert\Vert b(s,\cdot)\Vert_{L_{x}^{q}})^{\frac{2}{1-\theta}}\Vert f_{s}\Vert_{L_{x}^{2}}^{2}+\frac{\lambda}{2}\Vert\nabla f_{s}\Vert_{L_{x}^{2}}^{2}\;ds.
\end{aligned}
\]
Combining all the estimates above, one has
\begin{align*}
\Vert f_{t}\Vert_{L_{x}^{2}}^{2} & \leq\Vert f\Vert_{L_{x}^{2}}^{2}+2\int_{0}^{t}\frac{\vert\alpha\vert^{2}}{\lambda}\Vert f_{s}\Vert_{L_{x}^{2}}^{2}-\lambda\Vert\nabla f_{s}\Vert_{L_{x}^{2}}^{2}\;ds\\
 & \quad+\int_{0}^{t}C(\frac{1}{\lambda})^{\frac{1+\theta}{1-\theta}}(\vert\alpha\vert\Vert b(s,\cdot)\Vert_{L_{x}^{q}})^{\frac{2}{1-\theta}}\Vert f_{s}\Vert_{L_{x}^{2}}^{2}+\frac{\lambda}{2}\Vert\nabla f_{s}\Vert_{L_{x}^{2}}^{2}\;ds\\
 & \leq\Vert f\Vert_{L_{x}^{2}}^{2}+2\int_{0}^{t}\left(\frac{\vert\alpha\vert^{2}}{\lambda}+C(\frac{1}{\lambda})^{\frac{1+\theta}{1-\theta}}(\vert\alpha\vert\Vert b(s,\cdot)\Vert_{L_{x}^{q}})^{\frac{2}{1-\theta}}\right)\cdot\Vert f_{s}\Vert_{L_{x}^{2}}^{2}\;ds.
\end{align*}
Recall $\frac{2}{l}+\frac{n}{q}=\gamma$ with $1\leq\gamma<2$ and
$\Lambda=\Vert b\Vert_{L^{l}([0,T],L^{q}(\mathbb{R}^{n}))}$. Hölder's
inequality implies that
\begin{align*}
\int_{0}^{t}\Vert b(s,\cdot)\Vert_{L_{x}^{q}}^{\frac{2}{1-\theta}}\;ds & =\int_{0}^{t}\Vert b(s,\cdot)\Vert_{L_{x}^{q}}^{\frac{2}{(2-\gamma+\frac{2}{l})}}\;ds\leq\left(\int_{0}^{t}\Vert b(s,\cdot)\Vert_{L_{x}^{q}}^{l}\;ds\right)^{\frac{\frac{2}{l}}{2-\gamma+\frac{2}{l}}}t^{\frac{2-\gamma}{2-\gamma+\frac{2}{l}}}\\
 & =\Lambda^{\frac{2}{2-\gamma+\frac{2}{l}}}t^{\frac{2-\gamma}{2-\gamma+\frac{2}{l}}},
\end{align*}
where we set $\frac{c}{l}=0$ if $l=\infty$. For simplicity, we denote
$\mu=\frac{2}{2-\gamma+\frac{2}{l}}$ and $\nu=\frac{2-\gamma}{2-\gamma+\frac{2}{l}}$.
Hence, by Grönwall's inequality and
\[
\Vert f_{t}\Vert_{L_{x}^{2}}^{2}\leq\Vert f\Vert_{L_{x}^{2}}^{2}+2\int_{0}^{t}\left(\frac{\vert\alpha\vert^{2}}{\lambda}+C(\frac{1}{\lambda})^{\frac{1+\theta}{1-\theta}}(\vert\alpha\vert\Vert b(s,\cdot)\Vert_{L_{x}^{q}})^{\frac{2}{1-\theta}}\right)\Vert f_{s}\Vert_{L_{x}^{2}}^{2}\;ds,
\]
we deduce that
\[
\begin{aligned}\Vert f_{t}\Vert_{L_{x}^{2}}^{2} & \leq\exp{\left(2\int_{0}^{t}\left(\frac{\vert\alpha\vert^{2}}{\lambda}+C(\frac{1}{\lambda})^{\frac{1+\theta}{1-\theta}}(\vert\alpha\vert\Vert b(s,\cdot)\Vert_{L_{x}^{q}})^{\frac{2}{1-\theta}}\right)\;ds\right)}\Vert f\Vert_{L_{x}^{2}}^{2}\\
 & \leq\exp{\left(\frac{2\vert\alpha\vert^{2}}{\lambda}t+2C(\frac{1}{\lambda})^{\frac{1+\theta}{1-\theta}}\vert\alpha\vert^{\frac{2}{1-\theta}}\Lambda^{\mu}t^{\nu}\right)}\Vert f\Vert_{L_{x}^{2}}^{2}.
\end{aligned}
\]
Now the proof is complete.
\end{proof}
\begin{lem}
\label{lem6}\textcolor{black}{Suppose that $(a,b)$, $\psi$ and
$f_{t}$ are defined as in Lemma \ref{lem5}. Fo}r any $p\geq1$ and
any smooth non-negative function $\eta$ on $[0,T]$ satisfying $\eta(0)=0$,
we have 
\begin{align*}
\Vert f_{t}^{p}\eta^{\sigma}\Vert_{L_{t}^{2\chi}L_{x}^{2\chi}}^{2} & \leq C\vert\alpha\vert^{2}p^{2}\Vert f_{t}^{p}\eta^{\sigma}\Vert_{L_{t}^{2}L_{x}^{2}}^{2}+C(\vert\alpha\vert p)^{\frac{2}{2-\gamma}}\Vert b\Vert_{L_{t}^{l}L_{x}^{q}}^{\frac{2}{2-\gamma}}\Vert f_{t}^{p}\eta^{^{\frac{1}{2-\gamma}}}\Vert_{L_{t}^{2}L_{x}^{2}}^{2}\\
 & \quad+C\int_{0}^{T}\int_{\mathbb{R}^{n}}\sigma f_{t}^{2p}(x)\vert\partial_{t}\eta(t)\vert\eta^{2\sigma-1}(t)\;dxdt.
\end{align*}
where $\chi=\frac{n+2}{n}$, $\sigma=\frac{1}{2-\gamma}$ and $C>0$
is a constant depending only on $l,q,n,\lambda$.
\end{lem}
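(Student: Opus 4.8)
The plan is to test the equation $\partial_tf_t=A_t^\psi f_t$ against $f_t^{2p-1}\eta^{2\sigma}$ so as to get a differential inequality for $E(t):=\int_{\rn}f_t^{2p}\eta(t)^{2\sigma}\,dx$, and then close it with the parabolic Gagliardo--Nirenberg (Ladyzhenskaya) embedding. We may take $f\ge0$, so $f_t>0$ (otherwise read $f_t^{2p},f_t^p$ as $|f_t|^{2p},|f_t|^p$, with the usual routine modifications). Differentiating,
\[
\tfrac{d}{dt}E(t)=2p\,\langle A_t^\psi f_t,\,f_t^{2p-1}\eta^{2\sigma}\rangle_{L^2(\rn)}+2\sigma\int_{\rn}f_t^{2p}\eta^{2\sigma-1}\partial_t\eta\,dx ,
\]
and expanding the bilinear form exactly as in the proof of Lemma~\ref{lem5} — with $\partial_{x_i}(e^\psi f_t)=e^\psi(\alpha_if_t+\partial_{x_i}f_t)$ and $\partial_{x_i}(e^{-\psi}f_t^{2p-1}\eta^{2\sigma})=e^{-\psi}\big(-\alpha_if_t^{2p-1}+(2p-1)f_t^{2p-2}\partial_{x_i}f_t\big)\eta^{2\sigma}$ — the term proportional to $f_t^{2p-2}\sum_{i,j}a_{ij}\partial_{x_i}f_t\partial_{x_j}f_t$ gives by ellipticity a favourable multiple of $\lambda p^{-2}\int|\nabla(f_t^p)|^2\eta^{2\sigma}\,dx$; the mixed terms carrying one factor $\alpha$ and one factor $\nabla f_t$ are absorbed into it by Young's inequality at the cost of $Cp|\alpha|^2E(t)$; and $\int\sum_{i,j}a_{ij}\alpha_i\alpha_jf_t^{2p}\eta^{2\sigma}$ is bounded by $\lambda^{-1}|\alpha|^2E(t)$. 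In the drift part $-\int b_i\partial_{x_i}(e^\psi f_t)\,e^{-\psi}f_t^{2p-1}\eta^{2\sigma}$ the summand containing $\partial_{x_i}f_t$ equals $-\tfrac1{2p}\int b_i\partial_{x_i}(f_t^{2p})\eta^{2\sigma}$, which vanishes after integration by parts because $\divg b=0$ and $\eta$ does not depend on $x$; only $-\int(b\cdot\alpha)f_t^{2p}\eta^{2\sigma}$ survives. With $D(t):=\int|\nabla(f_t^p)|^2\eta^{2\sigma}\,dx$ this yields
\[
E'(t)+\lambda D(t)\le Cp|\alpha|^2E(t)+2p|\alpha|\int_{\rn}|b(t,x)|f_t^{2p}\eta^{2\sigma}\,dx+2\sigma\int_{\rn}f_t^{2p}\eta^{2\sigma-1}|\partial_t\eta|\,dx .
\]

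To control the drift term I would apply Hölder in $x$ with exponents $q,\tfrac{q}{q-1}$, interpolate $\|f_t^p\|_{L_x^{2q/(q-1)}}$ between $L_x^2$ and $L_x^{2n/(n-2)}$ with interpolation exponent $a=\tfrac n{2q}\in(0,1)$ (using $q>\tfrac n2$), and use the Sobolev inequality (using $n\ge3$); as $\eta$ is $x$-independent, writing $v:=f_t^p\eta^\sigma$ we have $\eta^\sigma\nabla(f_t^p)=\nabla v$, so the integral is $\le C\|b(t,\cdot)\|_{L_x^q}\|v(t)\|_{L_x^2}^{2(1-a)}\|\nabla v(t)\|_{L_x^2}^{2a}$. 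Young's inequality in the $\|\nabla v(t)\|_{L_x^2}^{2a}$ factor absorbs $\tfrac\lambda2 D(t)$ and leaves $C\big(p|\alpha|\,\|b(t,\cdot)\|_{L_x^q}\big)^{1/(1-a)}\|v(t)\|_{L_x^2}^2$. Integrating over $[0,t]$, using $E(0)=0$ since $\eta(0)=0$, and taking $\sup_{t\in[0,T]}$,
\[
\sup_{t\in[0,T]}E(t)+\tfrac\lambda2\int_0^TD\,ds\le Cp|\alpha|^2\int_0^TE\,ds+C(p|\alpha|)^{\frac1{1-a}}\int_0^T\|b(s,\cdot)\|_{L_x^q}^{\frac1{1-a}}E(s)\,ds+C\int_0^T\!\!\int_{\rn}\sigma f_s^{2p}\eta^{2\sigma-1}|\partial_s\eta|\,dxds .
\]

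The delicate step is the middle term on the right. I would split $E(s)=E(s)^{1-\theta}E(s)^\theta$ with $\theta=\tfrac1{l(1-a)}$, estimate $E(s)^\theta\le(\sup_tE(t))^\theta$, and apply Hölder in $s$ with exponents $l(1-a)$ and its conjugate to $\|b(s,\cdot)\|_{L_x^q}^{1/(1-a)}E(s)^{1-\theta}$, obtaining $\|b\|_{L_t^lL_x^q}^{1/(1-a)}\big(\int_0^TE\,ds\big)^{1-\theta}$; here $\gamma<2$ is precisely what makes $l(1-a)>1$ (substitute $a=\tfrac n{2q}$ and $\gamma=\tfrac2l+\tfrac nq$), so the Hölder exponents are admissible. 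A further Young inequality absorbs $\epsilon\sup_tE(t)$ into the left-hand side, and the arithmetic identity $\tfrac1{(1-a)(1-\theta)}=\tfrac2{2-\gamma}$ — with $1-\theta$ the reciprocal of the conjugate exponent, which is why $\int_0^TE$ reappears only to the first power — turns the surviving contribution into $C(p|\alpha|)^{2/(2-\gamma)}\|b\|_{L_t^lL_x^q}^{2/(2-\gamma)}\int_0^TE\,ds$; when $l=\infty$ one uses $\|b(s,\cdot)\|_{L_x^q}\le\Lambda$ and $\tfrac1{1-a}=\tfrac2{2-\gamma}$ directly. This gives
\[
\sup_{t}E(t)+\int_0^TD(s)\,ds\le C\big(p^2|\alpha|^2+(p|\alpha|)^{\frac2{2-\gamma}}\|b\|_{L_t^lL_x^q}^{\frac2{2-\gamma}}\big)\int_0^TE\,ds+C\int_0^T\!\!\int_{\rn}\sigma f_t^{2p}|\partial_t\eta|\eta^{2\sigma-1}\,dxdt ,
\]
and the parabolic embedding $\|v\|_{L_t^{2\chi}L_x^{2\chi}}^2\le C_n\big(\sup_t\|v(t)\|_{L_x^2}^2+\|\nabla v\|_{L_t^2L_x^2}^2\big)=C_n\big(\sup_tE(t)+\int_0^TD\big)$ with $\chi=\tfrac{n+2}n$, valid since $v(0)=0$, together with $\int_0^TE\,ds=\|f_t^p\eta^\sigma\|_{L_t^2L_x^2}^2$, gives the asserted estimate (recall $\sigma=\tfrac1{2-\gamma}$, so $\eta^\sigma=\eta^{1/(2-\gamma)}$).

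The step I expect to be the main obstacle is the exponent bookkeeping just above: the interpolation exponent $a$, the splitting power $\theta$, and the time-Hölder exponents must be chosen so that simultaneously (i) $\int_0^TE\,ds=\|v\|_{L_t^2L_x^2}^2$ occurs only to the first power — this linearity is what lets the ensuing Moser iteration close — (ii) the exponent of $\|b\|_{L_t^lL_x^q}$ is exactly $\tfrac2{2-\gamma}$, and (iii) every Young/absorption step is legitimate; all three come down to the two identities $l\big(1-\tfrac n{2q}\big)>1\Leftrightarrow\gamma<2$ and $\tfrac1{(1-a)(1-\theta)}=\tfrac2{2-\gamma}$. The remaining points — justifying the energy identity, the decay of $f_t$ and its derivatives, and the integrations by parts (in particular the $\divg b=0$ cancellation) — are routine given the standing smoothness hypotheses on $a$ and $b$.
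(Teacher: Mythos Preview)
Your proof is correct and follows the same overall architecture as the paper's: test against $f_t^{2p-1}\eta^{2\sigma}$, handle the elliptic and mixed $\alpha$--$\nabla f_t$ terms as in Lemma~\ref{lem5}, isolate the surviving drift contribution $-\int(b\cdot\alpha)f_t^{2p}\eta^{2\sigma}$, and finish with the parabolic embedding $L_t^\infty L_x^2\cap L_t^2\dot H_x^1\hookrightarrow L_{t,x}^{2\chi}$.

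The one genuine difference is in how the drift term is absorbed. The paper applies a single space--time H\"older step, writing $|b|\,g_t^2\eta^{2\sigma}=|b|\,|g_t\eta^\sigma|^\gamma\,|g_t\eta^{1/(2-\gamma)}|^{2-\gamma}$ (this uses $\sigma\gamma=2\sigma-1$) and pairing the three factors against exponents $(l,q)$, $(s,r)$, $(2,2)$ chosen so that $\tfrac2s+\tfrac nr=\tfrac n2$; the middle factor is then controlled by the anisotropic interpolation $\|\cdot\|_{L_t^sL_x^r}\le C\|\cdot\|_{L_t^\infty L_x^2}^{1-\beta}\|\nabla\cdot\|_{L_t^2L_x^2}^\beta$ and absorbed by Young. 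You instead do the spatial H\"older/Sobolev first, absorb the gradient pointwise in $t$, and only afterwards run a temporal H\"older together with the splitting $E(s)=E(s)^{1-\theta}E(s)^\theta$, $E^\theta\le(\sup E)^\theta$. Both routes land on the same exponents because of the identity $\tfrac{1}{(1-a)(1-\theta)}=\tfrac{2}{2-\gamma}$ that you verify. The paper's splitting is slightly slicker --- the $\eta$-powers separate automatically and no borrowing of $\sup_tE$ is needed --- while your route is arguably more elementary, since it never invokes the mixed-norm interpolation inequality and works with spatial Sobolev plus one-dimensional H\"older only. One cosmetic slip: after the $2p$ prefactor the mixed $\alpha$--$\nabla f_t$ terms cost $Cp^2|\alpha|^2E(t)$, not $Cp|\alpha|^2E(t)$; you already have the correct $p^2$ in your final displayed inequality, so this does not affect the argument.
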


\begin{proof}
For any $p\geq1$, we have 
\[
\frac{d}{dt}\Vert f_{t}\Vert_{L_{x}^{2p}}^{2p}=2p\langle A_{t}^{\psi}f_{t},f_{t}^{2p-1}\rangle_{L^{2}(\mathbb{R}^{n})}.
\]
Next we multiply both sides by $\eta^{2\sigma}$ and integrate on
$[0,T]$ to obtain
\begin{align}
 & \int_{0}^{T}\eta^{2\sigma}(t)\int_{\mathbb{R}^{n}}\partial_{t}f_{t}(x)f_{t}(x)^{2p-1}\;dxdt\nonumber \\
 & \quad=-\int_{0}^{T}\eta^{2\sigma}(t)\int_{\mathbb{R}^{n}}\langle\nabla(\exp(\psi(x))f_{t}(x))\cdot a(t,x),\nabla(\exp(-\psi(x))f_{t}^{2p-1}(x))\rangle\;dxdt\nonumber \\
 & \quad\quad-\int_{0}^{T}\eta^{2\sigma}(t)\int_{\mathbb{R}^{n}}\langle b(t,x),\nabla(\exp(\psi(x))f_{t}(x))\rangle\exp(-\psi(x))f_{t}^{2p-1}(x)\;dxdt\nonumber \\
 & \quad=\int_{0}^{T}\eta^{2\sigma}(t)\int_{\mathbb{R}^{n}}\langle\alpha\cdot a(t,x),\alpha\rangle f_{t}^{2p}(x)\;dxdt\nonumber \\
 & \quad\quad-(2p-1)\int_{0}^{T}\eta^{2\sigma}(t)\int_{\mathbb{R}^{n}}\langle\nabla f_{t}(x)\cdot a(t,x),\nabla f_{t}(x)\rangle f_{t}^{2p-2}(x)\;dxdt\nonumber \\
 & \quad\quad-(2p-1)\int_{0}^{T}\eta^{2\sigma}(t)\int_{\mathbb{R}^{n}}\langle\alpha\cdot a(t,x),\nabla f_{t}(x)\rangle f_{t}^{2p-1}(x)\;dxdt\nonumber \\
 & \quad\quad+\int_{0}^{T}\eta^{2\sigma}(t)\int_{\mathbb{R}^{n}}\langle\nabla f_{t}(x)\cdot a(t,x),\alpha\rangle f_{t}^{2p-1}(x)\;dxdt\nonumber \\
 & \quad\quad-\int_{0}^{T}\eta^{2\sigma}(t)\int_{\mathbb{R}^{n}}\langle b(t,x),\alpha\rangle f_{t}^{2p}(x)\;dxdt\nonumber \\
 & \quad\quad-\int_{0}^{T}\eta^{2\sigma}(t)\int_{\mathbb{R}^{n}}\langle b(t,x),\nabla f_{t}(x)\rangle f_{t}^{2p-1}(x)\;dxdt.\label{eq:local equation}
\end{align}
Condition (\ref{eq: 1. divergence free}) implies that
\[
\int_{\mathbb{R}^{n}}\langle b(t,x),\nabla f_{t}(x)\rangle f_{t}^{2p-1}(x)\;dx=0
\]
for any $t$, and hence the last term vanishes. Set $g_{t}=f_{t}^{p}$
for simplicity, then the left-hand side becomes 
\[
\begin{aligned} & \int_{0}^{T}\eta^{2\sigma}(t)\int_{\mathbb{R}^{n}}\partial_{t}f_{t}(x)f_{t}(x)^{2p-1}\;dxdt=\int_{0}^{T}\int_{\mathbb{R}^{n}}\eta^{2\sigma}(t)\frac{1}{2p}\partial_{t}(g_{t}^{2}(x))\;dxdt\\
 & \quad=\left.\int_{\mathbb{R}^{n}}\frac{1}{2p}\eta^{2\sigma}(t)g_{t}^{2}(x)\;dx\right\vert _{0}^{T}-\int_{0}^{T}\int_{\mathbb{R}^{n}}\frac{\sigma}{p}g_{t}^{2}(x)(\partial_{t}\eta(t))\eta^{2\sigma-1}(t)\;dxdt.
\end{aligned}
\]
Multiplying by $p$ on both sides of equation (\ref{eq:local equation}),
we obtain
\begin{align*}
 & \left.\int_{\mathbb{R}^{n}}\frac{1}{2}\eta^{2\sigma}(t)g_{t}^{2}(x)\;dx\right\vert _{0}^{T}-\int_{0}^{T}\int_{\mathbb{R}^{n}}\sigma g_{t}^{2}(x)(\partial_{t}\eta(t))\eta^{2\sigma-1}(t)\;dxdt\\
 & \quad=p\int_{0}^{T}\eta^{2\sigma}(t)\int_{\mathbb{R}^{n}}\langle\alpha\cdot a(t,x),\alpha\rangle g_{t}^{2}(x)\;dxdt\\
 & \quad\quad-\frac{(2p-1)}{p}\int_{0}^{T}\eta^{2\sigma}(t)\int_{\mathbb{R}^{n}}\langle\nabla g_{t}(x)\cdot a(t,x),\nabla g_{t}(x)\rangle\;dxdt\\
 & \quad\quad-(2p-2)\int_{0}^{T}\eta^{2\sigma}(t)\int_{\mathbb{R}^{n}}\langle\alpha\cdot a(t,x),\nabla g_{t}(x)\rangle g_{t}(x)\;dxdt\\
 & \quad\quad-p\int_{0}^{T}\eta^{2\sigma}(t)\int_{\mathbb{R}^{n}}\langle b(t,x),\alpha\rangle g_{t}^{2}(x)\;dxdt\\
 & \quad=I_{1}-I_{2}-I_{3}-I_{4}.
\end{align*}
Now we estimate each term individually as follows
\[
I_{1}\leq\int_{0}^{T}\eta^{2\sigma}(t)\frac{\vert\alpha\vert^{2}}{\lambda}p\Vert g_{t}\Vert_{L_{x}^{2}}^{2}\;dt,
\]
\[
-I_{2}-I_{3}\leq\int_{0}^{T}\eta^{2\sigma}(t)\frac{\vert\alpha\vert^{2}}{\lambda}(p-1)p\Vert g_{t}\Vert_{L_{x}^{2}}^{2}\;dt-\int_{0}^{T}\eta^{2\sigma}(t)\lambda\Vert\nabla g_{t}\Vert_{L_{x}^{2}}^{2}\;dt,
\]
\[
\begin{aligned}\vert I_{4}\vert & =\left\vert p\int_{0}^{T}\eta^{2\sigma}(t)\int_{\mathbb{R}^{n}}\langle b(t,x),\alpha\rangle g_{t}^{2}(x)\;dxdt\right\vert \\
 & \leq\int_{0}^{T}\int_{\mathbb{R}^{n}}p\vert b(t,x)\vert\vert g_{t}\eta^{\sigma}\vert^{\gamma}\vert g_{t}\vert^{2-\gamma}(\vert\alpha\vert\eta)\;dxdt\\
 & \leq\vert\alpha\vert p\Vert b\Vert_{L_{t}^{l}L_{x}^{q}}\Vert g_{t}\eta^{\sigma}\Vert_{L_{t}^{s}L_{x}^{r}}^{\gamma}\Vert g_{t}\eta^{^{\frac{1}{2-\gamma}}}\Vert_{L_{t}^{2}L_{x}^{2}}^{2-\gamma}
\end{aligned}
\]
since $\sigma\gamma=2\sigma-1$ and 
\[
\frac{1}{l}+\frac{\gamma}{s}+\frac{2-\gamma}{2}=1,\qquad\frac{1}{q}+\frac{\gamma}{r}+\frac{2-\gamma}{2}=1.
\]
From this relation, it is easy to see 
\[
\frac{2}{s}+\frac{n}{r}=\frac{n}{2},
\]
which yields the interpolation inequality 
\[
\Vert f\Vert_{L_{t}^{s}L_{x}^{r}}\leq C\Vert f\Vert_{L_{t}^{\infty}L_{x}^{2}}^{1-\beta}\Vert\nabla f\Vert_{L_{t}^{2}L_{x}^{2}}^{\beta},\qquad\beta=\frac{n}{2}-\frac{n}{r}.
\]
Together with Young's inequality, we deduce the following estimate
\begin{equation}
\Vert f\Vert_{L_{t}^{s}L_{x}^{r}}\leq C_{1}\Vert f\Vert_{L_{t}^{\infty}L_{x}^{2}}+C_{2}\Vert\nabla f\Vert_{L_{t}^{2}L_{x}^{2}}.\label{eq: interpolation inequality}
\end{equation}
Now we choose $\epsilon>0$ small enough such that, by Young's inequality,
we have 
\begin{align*}
\vert I_{4}\vert & \leq\epsilon\Vert g_{t}\eta^{\sigma}\Vert_{L_{t}^{s}L_{x}^{r}}^{2}+C(\epsilon)(\vert\alpha\vert p)^{\frac{2}{2-\gamma}}\Vert b\Vert_{L_{t}^{l}L_{x}^{q}}^{\frac{2}{2-\gamma}}\Vert g_{t}\eta^{^{\frac{1}{2-\gamma}}}\Vert_{L_{t}^{2}L_{x}^{2}}^{2}\\
 & \leq\frac{\lambda\wedge1}{4}(\Vert g_{t}\eta^{\sigma}\Vert_{L_{t}^{\infty}L_{x}^{2}}^{2}+\Vert\nabla g_{t}\eta^{\sigma}\Vert_{L_{t}^{2}L_{x}^{2}}^{2})+C(\vert\alpha\vert p)^{\frac{2}{2-\gamma}}\Vert b\Vert_{L_{t}^{l}L_{x}^{q}}^{\frac{2}{2-\gamma}}\Vert g_{t}\eta^{^{\frac{1}{2-\gamma}}}\Vert_{L_{t}^{2}L_{x}^{2}}^{2}.
\end{align*}
Combining these together, we conclude that
\begin{align*}
 & \left.\int_{\mathbb{R}^{n}}\frac{1}{2}\eta^{2\sigma}(t)g_{t}^{2}(x)\;dx\right\vert _{0}^{T}-\int_{0}^{T}\int_{\mathbb{R}^{n}}\sigma g_{t}^{2}(x)(\partial_{t}\eta(t))\eta^{2\sigma-1}(t)\;dxdt\\
 & \quad\leq\int_{0}^{T}\eta^{2\sigma}(t)\frac{\vert\alpha\vert^{2}}{\lambda}p^{2}\Vert g_{t}\Vert_{L_{x}^{2}}^{2}\;dt-\int_{0}^{T}\eta^{2\sigma}(t)\lambda\Vert\nabla g_{t}\Vert_{L_{x}^{2}}^{2}\;dt\\
 & \quad\quad+\frac{\lambda\wedge1}{4}(\Vert g_{t}\eta^{\sigma}\Vert_{L_{t}^{\infty}L_{x}^{2}}^{2}+\Vert\nabla g_{t}\eta^{\sigma}\Vert_{L_{t}^{2}L_{x}^{2}}^{2})+C(\vert\alpha\vert p)^{\frac{2}{2-\gamma}}\Vert b\Vert_{L_{t}^{l}L_{x}^{q}}^{\frac{2}{2-\gamma}}\Vert g_{t}\eta^{^{\frac{1}{2-\gamma}}}\Vert_{L_{t}^{2}L_{x}^{2}}^{2}.
\end{align*}
If we set $\eta(0)=0$, then the inequality above implies that
\begin{align*}
 & \frac{1}{2}\Vert g_{T}\eta^{\sigma}(T)\Vert_{L_{x}^{2}}^{2}+\frac{\lambda}{2}\Vert\nabla g_{t}\eta^{\sigma}\Vert_{L_{t}^{2}L_{x}^{2}}^{2}\\
 & \quad\leq\frac{\vert\alpha\vert^{2}p^{2}}{\lambda}\Vert g_{t}\eta^{\sigma}\Vert_{L_{t}^{2}L_{x}^{2}}^{2}+\frac{1}{4}\Vert g_{t}\eta^{\sigma}\Vert_{L_{t}^{\infty}L_{x}^{2}}^{2}+C(\vert\alpha\vert p)^{\frac{2}{2-\gamma}}\Vert b\Vert_{L_{t}^{l}L_{x}^{q}}^{\frac{2}{2-\gamma}}\Vert g_{t}\eta^{^{\frac{1}{2-\gamma}}}\Vert_{L_{t}^{2}L_{x}^{2}}^{2}\\
 & \quad\quad+\int_{0}^{T}\int_{\mathbb{R}^{n}}\sigma g_{t}^{2}(x)\vert\partial_{t}\eta(t)\vert\eta^{2\sigma-1}(t)\;dxdt,
\end{align*}
and the same is true if we replace $T$ by any $t\in[0,T]$. Hence
\begin{align*}
 & \frac{1}{4}\Vert g_{t}\eta^{\sigma}\Vert_{L_{t}^{\infty}L_{x}^{2}}^{2}+\frac{\lambda}{2}\Vert\nabla g_{t}\eta^{\sigma}\Vert_{L_{t}^{2}L_{x}^{2}}^{2}\\
 & \quad\leq\frac{\vert\alpha\vert^{2}p^{2}}{\lambda}\Vert g_{t}\eta^{\sigma}\Vert_{L_{t}^{2}L_{x}^{2}}^{2}+C(\vert\alpha\vert p)^{\frac{2}{2-\gamma}}\Vert b\Vert_{L_{t}^{l}L_{x}^{q}}^{\frac{2}{2-\gamma}}\Vert g_{t}\eta^{^{\frac{1}{2-\gamma}}}\Vert_{L_{t}^{2}L_{x}^{2}}^{2}\\
 & \quad\quad+\int_{0}^{T}\int_{\mathbb{R}^{n}}\sigma g_{t}^{2}(x)\vert\partial_{t}\eta(t)\vert\eta^{2\sigma-1}(t)\;dxdt.
\end{align*}
Applying the interpolation inequality (\ref{eq: interpolation inequality})
with $s=r=\chi=\frac{n+2}{n}$, we deduce that
\begin{align*}
\Vert g_{t}\eta^{\sigma}\Vert_{L_{t}^{2\chi}L_{x}^{2\chi}}^{2} & \leq C\vert\alpha\vert^{2}p^{2}\Vert g_{t}\eta^{\sigma}\Vert_{L_{t}^{2}L_{x}^{2}}^{2}+C(\vert\alpha\vert p)^{\frac{2}{2-\gamma}}\Vert b\Vert_{L_{t}^{l}L_{x}^{q}}^{\frac{2}{2-\gamma}}\Vert g_{t}\eta^{^{\frac{1}{2-\gamma}}}\Vert_{L_{t}^{2}L_{x}^{2}}^{2}\\
 & \quad+C\int_{0}^{T}\int_{\mathbb{R}^{n}}\sigma g_{t}^{2}(x)\vert\partial_{t}\eta(t)\vert\eta^{2\sigma-1}(t)\;dxdt,
\end{align*}
and the proof is complete.
\end{proof}
Now we can use the Moser's iteration to prove Theorem \ref{upperbound}.
\begin{proof}
[Proof of Theorem \ref{upperbound}]Define open intervals $I_{k}=((\frac{1}{2}-\frac{1}{2^{k+1}})T,T)$
and choose $\eta_{k}$ as cut-off functions such that $\eta_{k}=1$
on $I_{k}$, $\eta_{k}=0$ on $I_{0}\backslash I_{k-1}$ and $\vert\partial_{t}\eta_{k}\vert\leq4^{k}T^{-1}$.
Denote $L_{I_{k}\times\mathbb{R}^{n}}^{p}$ the $L^{p}$ space on
the space-time domain $I_{k}\times\rn$. Then 
\begin{align*}
 & \Vert g_{t}\Vert_{L_{I_{k}\times\mathbb{R}^{n}}^{2\chi}}^{2}\leq\Vert g_{t}\eta_{k}^{\sigma}\Vert_{L_{t}^{2\chi}L_{x}^{2\chi}}^{2}\\
 & \quad\leq C\vert\alpha\vert^{2}p^{2}\Vert g_{t}\eta_{k}^{\sigma}\Vert_{L_{t}^{2}L_{x}^{2}}^{2}+C(\vert\alpha\vert p)^{\frac{2}{2-\gamma}}\Vert b\Vert_{L_{t}^{l}L_{x}^{q}}^{\frac{2}{2-\gamma}}\Vert g_{t}\eta_{k}^{^{\frac{1}{2-\gamma}}}\Vert_{L_{t}^{2}L_{x}^{2}}^{2}\\
 & \quad\quad+C\int_{0}^{T}\int_{\mathbb{R}^{n}}\sigma g_{t}^{2}(x)\vert\partial_{t}\eta_{k}(t)\vert\eta_{k}^{2\sigma-1}(t)\;dxdt\\
 & \quad\leq C\vert\alpha\vert^{2}p^{2}\Vert g_{t}\Vert_{L_{I_{k-1}\times\mathbb{R}^{n}}^{2}}^{2}+C(\vert\alpha\vert p)^{\frac{2}{2-\gamma}}\Vert b\Vert_{L_{t}^{l}L_{x}^{q}}^{\frac{2}{2-\gamma}}\Vert g_{t}\Vert_{L_{I_{k-1}\times\mathbb{R}^{n}}^{2}}^{2}+C\sigma\frac{4^{k}}{T}\Vert g_{t}\Vert_{L_{I_{k-1}\times\mathbb{R}^{n}}^{2}}^{2}\\
 & \quad\leq C\left(\vert\alpha\vert^{2}p^{2}+p^{\frac{2}{2-\gamma}}\Vert b\Vert_{L_{t}^{l}L_{x}^{q}}^{\frac{2}{2-\gamma}}\vert\alpha\vert^{\frac{2}{2-\gamma}}+\sigma\frac{4^{k}}{T}\right)\Vert g_{t}\Vert_{L_{I_{k-1}\times\mathbb{R}^{n}}^{2}}^{2}.
\end{align*}
Recall that $g_{t}=f_{t}^{p}$. Let $p_{0}=1$ and $p_{k}=\chi^{k}=(\frac{n+2}{n})^{k}$
for $k=1,2,\cdots$. Then 
\[
\Vert f_{t}^{p_{k-1}}\Vert_{L_{I_{k}\times\mathbb{R}^{n}}^{2\chi}}^{2}\leq C\left(\vert\alpha\vert^{2}p_{k-1}^{2}+p_{k-1}^{\frac{2}{2-\gamma}}\Vert b\Vert_{L_{t}^{l}L_{x}^{q}}^{\frac{2}{2-\gamma}}\vert\alpha\vert^{\frac{2}{2-\gamma}}+\sigma\frac{4^{k}}{T}\right)\Vert f_{t}^{p_{k-1}}\Vert_{L_{I_{k-1}\times\mathbb{R}^{n}}^{2}}^{2},
\]
or equivalently, 
\[
\Vert f_{t}\Vert_{L_{I_{k}\times\mathbb{R}^{n}}^{2p_{k}}}\leq C^{\frac{1}{2p_{k-1}}}\left(\vert\alpha\vert^{2}p_{k-1}^{2}+p_{k-1}^{\frac{2}{2-\gamma}}\Vert b\Vert_{L_{t}^{l}L_{x}^{q}}^{\frac{2}{2-\gamma}}\vert\alpha\vert^{\frac{2}{2-\gamma}}+\sigma\frac{4^{k}}{T}\right)^{\frac{1}{2p_{k-1}}}\Vert f_{t}\Vert_{L_{I_{k-1}\times\mathbb{R}^{n}}^{2p_{k-1}}}.
\]
Iterate the procedure above to get that
\[
\Vert f_{t}\Vert_{L_{(\frac{T}{2},T)\times\mathbb{R}^{n}}^{\infty}}\leq\left(\prod_{k=1}^{\infty}C^{\frac{1}{2p_{k-1}}}(\vert\alpha\vert^{2}p_{k-1}^{2}+p_{k-1}^{\frac{2}{2-\gamma}}\Vert b\Vert_{L_{t}^{l}L_{x}^{q}}^{\frac{2}{2-\gamma}}\vert\alpha\vert^{\frac{2}{2-\gamma}}+\sigma\frac{4^{k}}{T})^{\frac{1}{2p_{k-1}}}\right)\Vert f_{t}\Vert_{L_{I_{0}\times\mathbb{R}^{n}}^{2}}.
\]
Since $p_{k}=(\frac{n+2}{n})^{k}\leq2^{k}$ and $2-\gamma\leq1$,
we have
\begin{align}
\Vert f_{t}\Vert_{L_{(\frac{T}{2},T)\times\mathbb{R}^{n}}^{\infty}} & \leq\left(\prod_{k=1}^{\infty}C^{\frac{1}{2p_{k-1}}}(\vert\alpha\vert^{2}p_{k-1}^{2}+p_{k-1}^{\frac{2}{2-\gamma}}\Lambda^{\frac{2}{2-\gamma}}\vert\alpha\vert^{\frac{2}{2-\gamma}}+\sigma\frac{4^{k}}{T})^{\frac{1}{2p_{k-1}}}\right)\Vert f_{t}\Vert_{L_{I_{0}\times\mathbb{R}^{n}}^{2}}\nonumber \\
 & \leq\left(\prod_{k=1}^{\infty}C^{\frac{1}{2p_{k-1}}}(\vert\alpha\vert^{2}+\Lambda^{\frac{2}{2-\gamma}}\vert\alpha\vert^{\frac{2}{2-\gamma}}+\frac{\sigma}{T})^{\frac{1}{2p_{k-1}}}(4^{\frac{k}{2-\gamma}})^{\frac{1}{2p_{k-1}}}\right)\Vert f_{t}\Vert_{L_{I_{0}\times\mathbb{R}^{n}}^{2}}\nonumber \\
 & \leq C(\vert\alpha\vert^{2}+\Lambda^{\frac{2}{2-\gamma}}\vert\alpha\vert^{\frac{2}{2-\gamma}}+\frac{\sigma}{T})^{\frac{n+2}{4}}\Vert f_{t}\Vert_{L_{I_{0}\times\mathbb{R}^{n}}^{2}}\nonumber \\
 & =C(\vert\alpha\vert^{2}T+\Lambda^{\frac{2}{2-\gamma}}\vert\alpha\vert^{\frac{2}{2-\gamma}}T+\sigma)^{\frac{n+2}{4}}T^{-\frac{n+2}{4}}\Vert f_{t}\Vert_{L_{I_{0}\times\mathbb{R}^{n}}^{2}}.\label{eq: 3. energy inequality (local use)}
\end{align}
We already proved inequality (\ref{eq: energy estimate -1}), which
implies
\[
\Vert f_{t}\Vert_{L_{I_{0}\times\mathbb{R}^{n}}^{2}}\leq T^{\frac{1}{2}}\exp{\left(C(\vert\alpha\vert^{2}T+\vert\alpha\vert^{\frac{2}{1-\theta}}\Lambda^{\mu}T^{\nu})\right)}\Vert f\Vert_{L_{x}^{2}}.
\]
Inserting this into (\ref{eq: 3. energy inequality (local use)}),
we derive that
\begin{align*}
\Vert f_{t}\Vert_{L_{(\frac{T}{2},T)\times\mathbb{R}^{n}}^{\infty}} & \leq C(\vert\alpha\vert^{2}T+\Lambda^{\frac{2}{2-\gamma}}\vert\alpha\vert^{\frac{2}{2-\gamma}}T+\sigma)^{\frac{n+2}{4}}T^{-\frac{n}{4}}\\
 & \quad\times\exp{\left(C(\vert\alpha\vert^{2}T+\vert\alpha\vert^{\frac{2}{1-\theta}}\Lambda^{\mu}T^{\nu})\right)}\Vert f\Vert_{L_{x}^{2}}.
\end{align*}
Notice that $1-\theta=2-\frac{n}{q}=2-\gamma+\frac{2}{l}$ and recall
that $\mu=\frac{2}{2-\gamma+\frac{2}{l}}$, $\nu=\frac{2-\gamma}{2-\gamma+\frac{2}{l}}$.
Hence 
\[
\vert\alpha\vert^{\frac{2}{2-\gamma}}\Lambda^{\frac{2}{2-\gamma}}T=(\vert\alpha\vert^{\frac{2}{1-\theta}}\Lambda^{\mu}T^{\nu})^{\frac{2-\gamma+\frac{2}{l}}{2-\gamma}},
\]
and $(\vert\alpha\vert^{2}T+\Lambda^{\frac{2}{2-\gamma}}\vert\alpha\vert^{\frac{2}{2-\gamma}}T+\sigma)^{\frac{n+2}{4}}$
can be regarded as a polynomial of parameters $(\vert\alpha\vert^{2}T,\vert\alpha\vert^{\frac{2}{1-\theta}}\Lambda^{\mu}T^{\nu})$,
which can be dominated by 
\[
C\exp{\left(C(\vert\alpha\vert^{2}T+\vert\alpha\vert^{\frac{2}{1-\theta}}\Lambda^{\mu}T^{\nu})\right)}.
\]
So we have 
\[
\Vert f_{T}\Vert_{L_{x}^{\infty}}\leq CT^{-\frac{n}{4}}\exp{\left(C(\vert\alpha\vert^{2}T+\vert\alpha\vert^{\frac{2}{1-\theta}}\Lambda^{\mu}T^{\nu})\right)}\Vert f\Vert_{L_{x}^{2}}.
\]
By duality, i.e. equation (\ref{eq: 3. dual equation})
\[
\Vert f_{T}\Vert_{L_{x}^{2}}\leq CT^{-\frac{n}{4}}\exp{\left(C(\vert\alpha\vert^{2}T+\vert\alpha\vert^{\frac{2}{1-\theta}}\Lambda^{\mu}T^{\nu})\right)}\Vert f\Vert_{L_{x}^{1}}.
\]
Using the Chapman-Kolmogorov equation, one has
\[
\Vert f_{2T}\Vert_{L_{x}^{\infty}}\leq CT^{-\frac{n}{2}}\exp{\left(C(\vert\alpha\vert^{2}T+\vert\alpha\vert^{\frac{2}{1-\theta}}\Lambda^{\mu}T^{\nu})\right)}\Vert f\Vert_{L_{x}^{1}}.
\]
Recall that
\[
f_{2T}(x)=\int_{\rn}f(\xi)\exp(-\psi(x))\Gamma(2T,x;0,\xi)\exp(\psi(\xi))\,d\xi
\]
for any $f\in C_{0}^{\infty}(\rn)$ and that $\psi(x)=\alpha\cdot x$.
Replacing $2T$ by $t$ and dividing both sides by $\exp(-\psi(x))\exp(\psi(\xi))$,
then we have a pointwise upper bound on $\Gamma$ as follows
\[
\Gamma(t,x;0,\xi)\leq\frac{C}{t^{n/2}}\exp\left(C(\vert\alpha\vert^{2}t+\vert\alpha\vert^{\frac{2}{1-\theta}}\Lambda^{\mu}t^{\nu})+\alpha\cdot(x-\xi)\right)
\]
for any $\alpha\in\rn$, where $C$ depends only on $(l,q,n,\lambda)$.
Set $m(t,x)=\min_{\alpha\in\mathbb{R}^{n}}(C(\vert\alpha\vert^{2}t+\vert\alpha\vert^{\frac{2}{1-\theta}}\Lambda^{\mu}t^{\nu})+\alpha\cdot x)$.
Taking the minimum of the right-hand side over all $\alpha\in\rn$,
we can conclude that
\[
\Gamma(t,x;0,\xi)\leq\frac{C}{t^{n/2}}\exp(m(t,x-\xi)).
\]
Finally, we shift $\Gamma(t-\tau,x,0,\xi)$ by $\tau$ to obtain estimate
(\ref{eq: 0 theorem 1 upper bound}). Now the proof is complete.
\end{proof}
We may give an elementary and explicit estimate for the function $m$
appearing in the theorem we just proved, which also gives a more explicit
form of this upper bound. 
\begin{cor}
\label{cor: 3. a explicit upper bound-2} Under the same assumptions
and notations as in Theorem \ref{upperbound}. If $\mu\equiv\frac{2}{2-\gamma+\frac{2}{l}}>1$,
the fundamental solution has upper bound
\begin{equation}
\Gamma(t,x;\tau,\xi)\leq\begin{cases}
\frac{C_{1}}{(t-\tau)^{n/2}}\exp\left(-\frac{1}{C_{2}}\left(\frac{\vert x-\xi\vert^{2}}{t-\tau}\right)\right) & \frac{\vert x\vert^{\mu-2}}{t^{\mu-\nu-1}}<1\\
\frac{C_{1}}{(t-\tau)^{n/2}}\exp\left(-\frac{1}{C_{2}}\left(\frac{\vert x-\xi\vert^{\mu}}{(t-\tau)^{\nu}}\right)^{\frac{1}{\mu-1}}\right) & \frac{\vert x\vert^{\mu-2}}{t^{\mu-\nu-1}}\geq1
\end{cases}\label{eq: 3. upper bound-2}
\end{equation}
where $\Lambda=\Vert b\Vert_{L^{l}(0,T;L^{q}(\rn))}$, $C_{1}=C_{1}(l,q,n,\lambda)$,
$C_{2}=C_{2}(l,q,n,\lambda,\Lambda)$. If $\mu=1$, which implies
$q=\infty$, we can solve for $m(t,x)$ explicitly and obtain 
\begin{equation}
\Gamma(t,x;\tau,\xi)\leq\frac{C_{1}}{(t-\tau)^{n/2}}\exp\left(-\frac{(C_{1}\Lambda(t-\tau)^{\nu}-\vert x-\xi\vert)^{2}}{4C_{1}(t-\tau)}\right).
\end{equation}
\end{cor}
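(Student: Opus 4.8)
The plan is to bound the function $m(t,y)$ from Theorem~\ref{upperbound} from above by an explicit choice of $\alpha$, and then put $y=x-\xi$ and translate in time by $\tau$. Recall that
\[
m(t,y)=\min_{\alpha\in\rn}\Big(C\big(|\alpha|^{2}t+|\alpha|^{\mu}\Lambda^{\mu}t^{\nu}\big)+\alpha\cdot y\Big),
\]
so every particular $\alpha$ produces an upper bound for $m$, hence via Theorem~\ref{upperbound} an upper bound for $\Gamma$. For $y\neq0$ the efficient choice is $\alpha=-a\,y/|y|$ with $a\ge0$, for which $\alpha\cdot y=-a|y|$ and the problem reduces to minimizing the scalar function $\phi(a)=Ca^{2}t+Ca^{\mu}\Lambda^{\mu}t^{\nu}-a|y|$ over $a\ge0$. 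The dichotomy in (\ref{eq: 3. upper bound-2}) reflects which of the two positive terms of $\phi$ balances the linear term $a|y|$ at the minimizer.

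In the regime $\frac{|y|^{\mu-2}}{t^{\mu-\nu-1}}<1$ I would take $a=\delta|y|/t$ with $\delta=\delta(l,q,n,\lambda,\Lambda)>0$ a small constant to be fixed. Then $Ca^{2}t-a|y|=(C\delta^{2}-\delta)|y|^{2}/t$, while the leftover term satisfies
\[
Ca^{\mu}\Lambda^{\mu}t^{\nu}=C\delta^{\mu}\Lambda^{\mu}\,|y|^{\mu-2}t^{\nu-\mu+1}\cdot\frac{|y|^{2}}{t}\le C\delta^{\mu}\Lambda^{\mu}\,\frac{|y|^{2}}{t},
\]
using that $|y|^{\mu-2}t^{\nu-\mu+1}=\frac{|y|^{\mu-2}}{t^{\mu-\nu-1}}<1$. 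Since $\mu>1$ both $\delta^{2}$ and $\delta^{\mu-1}$ tend to $0$ as $\delta\to0$, so choosing $\delta$ small makes the coefficient of $|y|^{2}/t$ strictly negative; this gives $m(t,y)\le-C_{2}^{-1}|y|^{2}/t$ and, after putting $y=x-\xi$ and shifting $t\mapsto t-\tau$, the first line of (\ref{eq: 3. upper bound-2}).

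In the regime $\frac{|y|^{\mu-2}}{t^{\mu-\nu-1}}\ge1$ I would instead take $a=\delta\big(|y|/(\Lambda^{\mu}t^{\nu})\big)^{1/(\mu-1)}$. Writing $Q:=\big(|y|^{\mu}/(\Lambda^{\mu}t^{\nu})\big)^{1/(\mu-1)}$ and using $\tfrac{\mu}{\mu-1}=1+\tfrac{1}{\mu-1}$, one checks that $Ca^{\mu}\Lambda^{\mu}t^{\nu}=C\delta^{\mu}Q$ and $a|y|=\delta Q$, so $\phi(a)=Ca^{2}t+(C\delta^{\mu}-\delta)Q$. The leftover quadratic term is controlled by a plain exponent count: writing $t=(t^{\mu-1})^{1/(\mu-1)}$,
\[
\frac{Ca^{2}t}{Q}=C\delta^{2}\Big(\frac{t^{\mu-\nu-1}}{\Lambda^{\mu}|y|^{\mu-2}}\Big)^{1/(\mu-1)}\le C\delta^{2}\,\Lambda^{-\mu/(\mu-1)},
\]
where the inequality is exactly the standing hypothesis. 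Hence $\phi(a)\le\big(C\delta^{2}\Lambda^{-\mu/(\mu-1)}+C\delta^{\mu}-\delta\big)Q$, and the same smallness choice of $\delta$ yields $m(t,y)\le-C_{2}^{-1}\big(|y|^{\mu}/t^{\nu}\big)^{1/(\mu-1)}$ after absorbing $\Lambda^{-\mu/(\mu-1)}$ into $C_{2}$; substituting $y=x-\xi$ and $t\mapsto t-\tau$ gives the second line of (\ref{eq: 3. upper bound-2}).

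When $\mu=1$, i.e. $q=\infty$, the term $|\alpha|^{\mu}\Lambda^{\mu}t^{\nu}$ is linear in $|\alpha|$, so $\phi(a)=Ca^{2}t-a\,(|y|-C\Lambda t^{\nu})$ is a genuine quadratic in $a$; its minimum over $a\ge0$ is attained at $a=(|y|-C\Lambda t^{\nu})_{+}/(2Ct)$ and equals $-\big((|y|-C\Lambda t^{\nu})_{+}\big)^{2}/(4Ct)$. Taking $C=C_{1}$ the constant of Theorem~\ref{upperbound}, substituting $y=x-\xi$ and shifting $t\mapsto t-\tau$ gives the claimed bound, the truncation $(\cdot)_{+}$ being immaterial in the range $|x-\xi|\ge C_{1}\Lambda(t-\tau)^{\nu}$ that carries the decay. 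The computation is elementary throughout; I expect the only delicate point to be the exponent bookkeeping in the power regime — making sure the leftover $Ca^{2}t$ is dominated by the main term $Q$ exactly under the hypothesis $\frac{|y|^{\mu-2}}{t^{\mu-\nu-1}}\ge1$, and tracking how the $\Lambda$-dependent factors are absorbed into $C_{2}$.
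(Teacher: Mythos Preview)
Your proposal is correct and follows essentially the same approach as the paper: in each regime you plug a specific test vector $\alpha=-a\,y/|y|$ into the variational expression for $m$, with $a\sim |y|/t$ in the Gaussian regime and $a\sim(|y|/t^{\nu})^{1/(\mu-1)}$ in the power regime, then verify via the regime hypothesis that the subdominant term is controlled. The only cosmetic differences are that the paper fixes the prefactor as $1/(4C_{2})$ with $\Lambda$ already absorbed into $C_{2}$, whereas you carry a free small $\delta$ and absorb $\Lambda$ at the end; your treatment of the $\mu=1$ case with the truncation $(\cdot)_{+}$ is in fact slightly more careful than the paper's.
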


\begin{proof}
Clearly, it is enough to estimate function $m(t,x)$. In this proof,
we denote $C_{1}$ as a constant depending only on $(l,q,n,\lambda)$
and $C_{2}$ a constant depending on $(l,q,n,\lambda,\Lambda)$. Their
values may be different throughout the proof. Notice that $\mu\geq1$.
When $\mu>1$, by taking $\alpha=-\frac{x}{4C_{2}t}$ , we have
\[
m(t,x)\leq\frac{C_{1}\vert x\vert^{2}}{16C_{2}^{2}t}+\frac{C_{1}\Lambda^{\mu}\vert x\vert^{\mu}}{4^{\mu}C_{2}^{\mu}t^{\mu-\nu}}-\frac{\vert x\vert^{2}}{4C_{2}t}=\frac{C_{1}\vert x\vert^{2}}{16C_{2}^{2}t}+\frac{C_{1}\Lambda^{\mu}\vert x\vert^{2}}{4^{\mu}C_{2}^{\mu}t^{1}}\cdot\frac{\vert x\vert^{\mu-2}}{t^{\mu-\nu-1}}-\frac{\vert x\vert^{2}}{4C_{2}t}\leq-\frac{\vert x\vert^{2}}{8C_{2}t}
\]
if $\frac{\vert x\vert^{\mu-2}}{t^{\mu-\nu-1}}<1$. When $\frac{\vert x\vert^{\mu-2}}{t^{\mu-\nu-1}}\geq1$,
we take $\alpha=-\frac{1}{4C_{2}}(\frac{\vert x\vert}{t^{\nu}})^{\frac{1}{\mu-1}}\frac{x}{\vert x\vert}$.
Then one has
\begin{align*}
m(t,x) & \leq\frac{C_{1}\vert x\vert^{\frac{2}{\mu-1}}}{16C_{2}^{2}t^{\frac{2\nu}{\mu-1}-1}}+\frac{C_{1}\Lambda^{\mu}\vert x\vert^{\frac{\mu}{\mu-1}}}{4^{\mu}C_{2}^{\mu}t^{\frac{\nu}{\mu-1}}}-\frac{\vert x\vert^{\frac{\mu}{\mu-1}}}{C_{2}t^{\frac{\nu}{\mu-1}}}\\
 & =\frac{C_{1}\vert x\vert^{\frac{\mu}{\mu-1}}}{16C_{2}^{2}t^{\frac{\nu}{\mu-1}}}\cdot\frac{\vert x\vert^{\frac{2-\mu}{\mu-1}}}{t^{\frac{v-\mu+1}{\mu-1}}}+\frac{C_{1}\Lambda^{\mu}\vert x\vert^{\frac{\mu}{\mu-1}}}{4^{\mu}C_{2}^{\mu}t^{\frac{\nu}{\mu-1}}}-\frac{\vert x\vert^{\frac{\mu}{\mu-1}}}{C_{2}t^{\frac{\nu}{\mu-1}}}\leq-\frac{\vert x\vert^{\frac{\mu}{\mu-1}}}{2C_{2}t^{\frac{\nu}{\mu-1}}}.
\end{align*}
Now consider the case that $\mu=1$. To obtain $m(t,x)=\min_{\alpha\in\mathbb{R}^{n}}(C_{1}(\vert\alpha\vert^{2}t+\vert\alpha\vert\Lambda t^{\nu})+\alpha\cdot x)$,
it is easy to see that $\alpha$ must be in opposite direction of
$x$, i.e. $\frac{\alpha}{\vert\alpha\vert}=-\frac{x}{\vert x\vert}$.
So we only need to find the minimum of the polynomial $C_{1}t\vert\alpha\vert^{2}+(C_{1}\Lambda t^{\nu}-\vert x\vert)\vert\alpha\vert$,
which is obtained at $\vert\alpha\vert=-\frac{C_{1}\Lambda t^{\nu}-\vert x\vert}{2C_{1}t}$
and the value is
\[
m(t,x)=-\frac{(C_{1}\Lambda t^{\nu}-\vert x\vert)^{2}}{4C_{1}t}.
\]
Now the proof is complete. 
\end{proof}
Recall that in dimension three, any Leray-Hopf weak solution to the
Navier-Stokes equations satisfies
\[
u\in L^{\infty}(0,T;L^{2}(\rt))\cap L^{2}(0,T;H^{1}(\rt)).
\]
Clearly $L^{2}(0,T;H^{1}(\rt))\subset L^{2}(0,T;L^{6}(\rt))$, thus
$\gamma=\frac{3}{2}$ for both function spaces. Notice that by interpolation,
$u\in L^{l}(0,T;L^{q}(\rt))$ for any $l\in[2,\infty]$ and $q\in[2,6]$
satisfying $\frac{2}{l}+\frac{3}{q}=\frac{3}{2}$. This is an interesting
case for which we have the following theorem.
\begin{thm}
\label{The10}Suppose $n=3$, and conditions (\ref{eq: 1. ellipticity})
and (\ref{eq: 1. divergence free}) hold for $b\in L^{l}(0,T;L^{q}(\rt))$
satisfying $\frac{2}{l}+\frac{3}{q}=\frac{3}{2}$. Then the fundamental
solution $\Gamma$ to (\ref{eq: 1. problem equation}) has the upper
bound 

\begin{equation}
\Gamma(t,x;\tau,\xi)\leq\begin{cases}
\frac{C_{1}}{(t-\tau)^{3/2}}\exp\left(-\frac{1}{C_{2}}\left(\frac{\vert x-\xi\vert^{2}}{t-\tau}\right)\right) & \frac{\vert x\vert^{l-4}}{t^{l-2}}<1\\
\frac{C_{1}}{(t-\tau)^{3/2}}\exp\left(-\frac{1}{C_{2}}\left(\frac{\vert x-\xi\vert^{4}}{t-\tau}\right)^{\frac{1}{3-\frac{4}{l}}}\right) & \frac{\vert x\vert^{l-4}}{t^{l-2}}\geq1,
\end{cases}
\end{equation}
where $\Lambda=\Vert b\Vert_{L^{l}(0,T;L^{q}(\rt))}$, $C_{1}=C_{1}(l,q,n,\lambda)$,
$C_{2}=C_{2}(l,q,\lambda,\Lambda)$. Here we set $\frac{\vert x\vert^{l-4}}{t^{l-2}}=\frac{\vert x\vert}{t}$
when $l=\infty$.
\end{thm}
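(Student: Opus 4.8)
The plan is to obtain Theorem~\ref{The10} as a direct specialisation of Corollary~\ref{cor: 3. a explicit upper bound-2} to $n=3$, with the drift lying on the scaling line $\frac{2}{l}+\frac{3}{q}=\frac{3}{2}$. First I would verify the hypotheses of Theorem~\ref{upperbound} (hence of the Corollary): with $n=3$ the parabolic exponent is $\gamma=\frac{2}{l}+\frac{n}{q}=\frac{2}{l}+\frac{3}{q}=\frac{3}{2}\in[1,2)$, and on this line $l\in[2,\infty]$, $q\in[2,6]$, so $q>\frac{3}{2}=\frac{n}{2}$ and $l>1$, as required. Since $\gamma>1$ we are in the supercritical regime, so it remains to compute the exponents $\mu,\nu$ of the Corollary, check that $\mu>1$, and read off \eqref{eq: 3. upper bound-2}.

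The computation of the exponents is elementary. From $2-\gamma+\frac{2}{l}=\frac{1}{2}+\frac{2}{l}=\frac{l+4}{2l}$ one obtains
$$\mu=\frac{2}{2-\gamma+\frac{2}{l}}=\frac{4l}{l+4},\qquad\nu=\frac{2-\gamma}{2-\gamma+\frac{2}{l}}=\frac{l}{l+4},\qquad\mu-1=\frac{3l-4}{l+4}.$$
Since $l\ge2$ we have $\mu=\frac{4l}{l+4}\ge\frac{4}{3}>1$, so the $\mu>1$ alternative of Corollary~\ref{cor: 3. a explicit upper bound-2}, i.e.\ \eqref{eq: 3. upper bound-2}, is the relevant one. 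From $\frac{\mu}{\mu-1}=\frac{4l}{3l-4}$ and $\frac{\nu}{\mu-1}=\frac{l}{3l-4}$ together with the identity $\frac{l}{3l-4}=\frac{1}{3-4/l}$ we get
$$\left(\frac{|x-\xi|^{\mu}}{(t-\tau)^{\nu}}\right)^{\frac{1}{\mu-1}}=\frac{|x-\xi|^{4l/(3l-4)}}{(t-\tau)^{l/(3l-4)}}=\left(\frac{|x-\xi|^{4}}{t-\tau}\right)^{\frac{1}{3-4/l}},$$
which is the exponent in the statement. For the threshold, $\mu-2=\frac{2(l-4)}{l+4}$ and $\mu-\nu-1=\frac{2(l-2)}{l+4}$, so
$$\frac{|x|^{\mu-2}}{t^{\mu-\nu-1}}=\left(\frac{|x|^{l-4}}{t^{l-2}}\right)^{2/(l+4)};$$
as $s\mapsto s^{2/(l+4)}$ is increasing on $(0,\infty)$ and fixes $1$, the condition $\frac{|x|^{\mu-2}}{t^{\mu-\nu-1}}<1$ is equivalent to $\frac{|x|^{l-4}}{t^{l-2}}<1$ (and likewise with $\ge$). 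Substituting these identities into \eqref{eq: 3. upper bound-2} yields the asserted bound, with $C_{1}=C_{1}(l,q,n,\lambda)$ and $C_{2}=C_{2}(l,q,\lambda,\Lambda)$ once the fixed value $n=3$ is absorbed.

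For the endpoint $l=\infty$ (equivalently $q=2$) one has $\mu\to4$, $\nu\to1$, $\frac{1}{3-4/l}\to\frac{1}{3}$, and $\frac{|x|^{\mu-2}}{t^{\mu-\nu-1}}=\frac{|x|^{2}}{t^{2}}$, whose comparison with $1$ coincides with that of $\frac{|x|}{t}$ with $1$; this is exactly the convention recorded in the statement, and one simply runs the $\mu>1$ branch of the proof of Corollary~\ref{cor: 3. a explicit upper bound-2} with these values (or lets $l\to\infty$ in the inequality). I do not expect any genuine obstacle here: the argument is a bookkeeping specialisation of the already-established Corollary, and the only points requiring care are checking that $\mu>1$ on the whole admissible range of $(l,q)$ and verifying that the threshold and the exponent translate into the powers $\frac{|x|^{l-4}}{t^{l-2}}$ and $\frac{1}{3-4/l}$ appearing in the theorem --- both being the routine algebraic identities displayed above.
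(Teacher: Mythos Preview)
Your proposal is correct and follows exactly the approach the paper intends: Theorem~\ref{The10} is stated immediately after Corollary~\ref{cor: 3. a explicit upper bound-2} as its specialisation to $n=3$, $\gamma=\tfrac32$, with no separate proof given. Your verification that $\mu>1$ throughout the admissible range and your algebraic identifications of the exponent $\tfrac{1}{3-4/l}$ and the threshold $\tfrac{|x|^{l-4}}{t^{l-2}}$ are precisely the bookkeeping the paper leaves implicit.
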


Another example is $L^{\infty}(0,T;L^{n}(\rn))$, which is not covered
in the classical paper \cite{aronson1968non} by Aronson and \cite{ladyzhenskaia1988linear}.
Here with the divergence-free condition on $b$, we have obtained
a Gaussian upper bound, which yields the first half of Theorem \ref{The-fundamental-solution-ln}.
\begin{thm}
\label{thm: 3. ln upper bound}The fundamental solution to (\ref{eq: 1. problem equation})
satisfying conditions (\ref{eq: 1. ellipticity}) and (\ref{eq: 1. divergence free}),
with $b\in L^{\infty}(0,T;L^{n}(\rn))$ has the Gaussian upper bound
\[
\Gamma(t,x;\tau,\xi)\leq\frac{C_{1}}{(t-\tau)^{n/2}}\exp\left(-\frac{1}{C_{2}}\left(\frac{\vert x-\xi\vert^{2}}{t-\tau}\right)\right),
\]
where $\Lambda=\Vert b\Vert_{L^{\infty}(0,T;L^{n}(\rn))}$, $C_{1}=C_{1}(n,\lambda)$,
$C_{2}=C_{2}(n,\lambda,\Lambda)$.
\end{thm}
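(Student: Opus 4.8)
The plan is to derive Theorem \ref{thm: 3. ln upper bound} as the endpoint case $l=\infty$, $q=n$ of Corollary \ref{cor: 3. a explicit upper bound-2} (equivalently, of Theorem \ref{upperbound}). First I would check that the hypotheses hold: $b\in L^{\infty}(0,T;L^{n}(\rn))$ corresponds to $l=\infty>1$ and $q=n>\frac{n}{2}$, and the parabolic exponent is $\gamma=\frac{2}{l}+\frac{n}{q}=0+1=1\in[1,2)$, so Theorem \ref{upperbound} is applicable under the standing smoothness assumption of this section on $a$ and $b$.

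Next I would specialize the auxiliary exponents of Theorem \ref{upperbound} to these values: $\theta=\frac{n}{q}-1=0$, hence $\frac{2}{1-\theta}=2$, while $\mu=\frac{2}{2-\gamma+\frac{2}{l}}=2$ and $\nu=\frac{2-\gamma}{2-\gamma+\frac{2}{l}}=1$. Inserting these into the pointwise bound obtained in the proof of Theorem \ref{upperbound},
\[
\Gamma(t,x;0,\xi)\leq\frac{C}{t^{n/2}}\exp\Bigl(C\bigl(|\alpha|^{2}t+|\alpha|^{\frac{2}{1-\theta}}\Lambda^{\mu}t^{\nu}\bigr)+\alpha\cdot(x-\xi)\Bigr),
\]
the exponent becomes the genuine quadratic $C(1+\Lambda^{2})t\,|\alpha|^{2}+\alpha\cdot(x-\xi)$ in $\alpha$, whose minimum over $\alpha\in\rn$ equals $-\frac{|x-\xi|^{2}}{4C(1+\Lambda^{2})t}$, attained at $\alpha=-\frac{x-\xi}{2C(1+\Lambda^{2})t}$. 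Thus $\Gamma(t,x;0,\xi)\leq\frac{C}{t^{n/2}}\exp\bigl(-\frac{|x-\xi|^{2}}{4C(1+\Lambda^{2})t}\bigr)$, and shifting the time origin from $0$ to $\tau$ yields the claimed bound with $C_{1}=C$ and $C_{2}=4C(1+\Lambda^{2})$, which depends only on $n,\lambda,\Lambda$ since $l$ and $q$ are fixed. Equivalently, when $\mu=2$ and $\nu=1$ the quantity $\frac{|x|^{\mu-2}}{t^{\mu-\nu-1}}$ is identically $1$, so both branches of (\ref{eq: 3. upper bound-2}) collapse to the same Gaussian expression.

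There is no genuinely hard step here: the analytic content — the $h$-transform energy estimate of Lemma \ref{lem5}, the Moser iteration of Lemma \ref{lem6}, and Theorem \ref{upperbound} — is already in place. The only things to verify are the exponent bookkeeping and the observation that at the critical value $\gamma=1$ the minimization over the tilting parameter $\alpha$ is a nondegenerate quadratic, so that the function $m(t,x-\xi)$ is exactly a negative constant multiple of $|x-\xi|^{2}/t$ rather than the weaker expression arising in the supercritical range $\gamma\in(1,2)$. To remove the smoothness hypothesis and obtain the bound for merely measurable $a,b$ (the form stated as Theorem \ref{The-fundamental-solution-ln}), one mollifies $a$ and $b$ in the space variable — this preserves the ellipticity constant and the divergence-free condition, and does not increase $\Vert b\Vert_{L^{\infty}(0,T;L^{n}(\rn))}$ — applies the estimate above uniformly to the regularized operators, and passes to the limit in the fundamental solutions, which is the approximation scheme of Section 4.
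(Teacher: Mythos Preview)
Your proposal is correct and is exactly the approach the paper takes: Theorem \ref{thm: 3. ln upper bound} is stated in the paper as an immediate specialization of Theorem \ref{upperbound} (or Corollary \ref{cor: 3. a explicit upper bound-2}) to the endpoint $l=\infty$, $q=n$, with no separate proof. Your exponent bookkeeping ($\gamma=1$, $\theta=0$, $\mu=2$, $\nu=1$) and the resulting quadratic minimization over $\alpha$ match the paper's framework, and your remark on removing the smoothness assumption via the approximation of Section 4 is precisely how the paper upgrades this to Theorem \ref{The-fundamental-solution-ln}.
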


Since we know that $\int_{\rn}\Gamma(t,x;\tau,\xi)d\xi=1$ and we
have proved the upper bound in Corollary \ref{cor: 3. a explicit upper bound-2},
which is of exponential decay in space, we can derive a lower bound
for $\Gamma$ in the following form. This proposition will be used
in the proof of a pointwise lower bound later.
\begin{prop}
\label{prop: 3. lower bound in a cone}Take the fundamental solution
of (\ref{eq: 1. problem equation}) satisfying conditions (\ref{eq: 1. ellipticity})
and (\ref{eq: 1. divergence free}), for any $\delta\in(0,1)$ and
$t-\tau$ small enough, we have 
\[
\int_{B(x,\tilde{R}(t-\tau))}\Gamma(t,x;\tau,\xi)d\xi\geq\delta,
\]
where $\tilde{R}(\cdot)$ is a function defined as follows
\[
\tilde{R}(t)=\begin{cases}
Ct^{1/2} & \text{if }\gamma=1\\
Ct^{(2-\gamma)/2}\ln\frac{1}{t} & \text{if }\gamma>1,
\end{cases}
\]
 $B(x,r)$ is the ball of radius $r$ and center $x$, and $C$ depends
only on $(\delta,l,q,n,\lambda,\Lambda)$. 
\end{prop}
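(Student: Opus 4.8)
\emph{Proof proposal.} The plan is to play the conservativity of the kernel against the spatially exponential upper bound. First recall that $\int_{\mathbb{R}^{n}}\Gamma(t,x;\tau,\xi)\,d\xi=1$: indeed $u\equiv1$ solves (\ref{eq: 1. problem equation}) and, by uniqueness for the smooth bounded coefficients in force here, $u(t,x)=\int_{\mathbb{R}^{n}}\Gamma(t,x;\tau,\xi)\,d\xi$ must coincide with it; this is the probability-density property noted in the introduction. Writing $s:=t-\tau$ and $R:=\tilde R(s)$, one has
\[
\int_{B(x,R)}\Gamma(t,x;\tau,\xi)\,d\xi=1-\int_{\{\,|x-\xi|>R\,\}}\Gamma(t,x;\tau,\xi)\,d\xi ,
\]
so it suffices to make the tail integral on the right at most $1-\delta$ by first choosing the constant $C$ in $\tilde R$ large and then taking $s$ small, both depending only on $\delta,l,q,n,\lambda,\Lambda$.

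To bound the tail I would use the pointwise estimate established in the proof of Theorem \ref{upperbound} (equivalently Corollary \ref{cor: 3. a explicit upper bound-2}): for every $\alpha\in\mathbb{R}^{n}$,
\[
\Gamma(t,x;\tau,\xi)\le\frac{C_{0}}{s^{n/2}}\exp\!\Bigl(C_{0}\bigl(|\alpha|^{2}s+|\alpha|^{\mu}\Lambda^{\mu}s^{\nu}\bigr)+\alpha\cdot(x-\xi)\Bigr),\qquad C_{0}=C_{0}(l,q,n,\lambda).
\]
Fix $\rho>0$ depending only on $s$, apply this with $\alpha=-\rho\,(\xi-x)/|\xi-x|$ so that the linear term becomes $-\rho|x-\xi|$, integrate in $\xi$ over $\{|x-\xi|>R\}$, and use the elementary bound $\int_{\{|z|>R\}}e^{-\rho|z|}\,dz\le C_{n}\rho^{-n}(1+\rho R)^{n-1}e^{-\rho R}$ (incomplete Gamma function) to obtain
\[
\int_{\{\,|x-\xi|>R\,\}}\Gamma(t,x;\tau,\xi)\,d\xi\le\frac{C_{0}C_{n}\,(1+\rho R)^{n-1}}{s^{n/2}\rho^{n}}\,\exp\!\bigl(C_{0}\rho^{2}s+C_{0}\rho^{\mu}\Lambda^{\mu}s^{\nu}-\rho R\bigr).
\]

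Now I would choose $\rho$ exploiting the identity $\nu=\mu(2-\gamma)/2$: take $\rho=s^{-1/2}$ if $\gamma=1$ and $\rho=s^{-(2-\gamma)/2}$ if $\gamma>1$. Then in both cases $\rho^{\mu}\Lambda^{\mu}s^{\nu}=\Lambda^{\mu}$ is a fixed constant; $\rho^{2}s$ equals $1$ if $\gamma=1$ and $s^{\gamma-1}\to0$ if $\gamma>1$; $s^{n/2}\rho^{n}$ equals $1$ if $\gamma=1$ and $s^{n(\gamma-1)/2}$ if $\gamma>1$; and, with $R=\tilde R(s)$, $\rho R$ equals $C$ if $\gamma=1$ and $C\ln\frac1s$ if $\gamma>1$, so $e^{-\rho R}$ equals $e^{-C}$ if $\gamma=1$ and $s^{C}$ if $\gamma>1$. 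Substituting, the tail is bounded by $C_{0}C_{n}(1+C)^{n-1}e^{C_{0}+C_{0}\Lambda^{\mu}-C}$ when $\gamma=1$ — independent of $s$, and tending to $0$ as $C\to\infty$ — and by $C_{0}C_{n}e^{C_{0}\Lambda^{\mu}}e^{C_{0}s^{\gamma-1}}(1+C\ln\frac1s)^{n-1}s^{\,C-n(\gamma-1)/2}$ when $\gamma>1$, which tends to $0$ as $s\to0^{+}$ as soon as $C>n(\gamma-1)/2$. In either regime one first fixes $C=C(\delta,l,q,n,\lambda,\Lambda)$ and then takes $s$ small, making the tail $\le1-\delta$ and finishing the proof. (For $\gamma=1$ the argument in fact holds for all $t-\tau\in(0,T]$.)

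The step that needs genuine care is the supercritical case $\gamma>1$, where the volume prefactor degenerates: $s^{n/2}\rho^{n}=s^{n(\gamma-1)/2}\to0$, so $1/(s^{n/2}\rho^{n})$ blows up as $s\to0$. A cone of radius only $\asymp s^{(2-\gamma)/2}$ would give $e^{-\rho R}=e^{-C}$, a fixed constant that cannot absorb this blow-up; it is precisely the extra factor $\ln\frac1s$ in $\tilde R(s)=Cs^{(2-\gamma)/2}\ln\frac1s$ that turns $e^{-\rho R}$ into the tunable power $s^{C}$, which can be made to beat $s^{-n(\gamma-1)/2}$ by enlarging $C$. This logarithmic correction is the structural reason the supercritical cone is thinner than the Gaussian scaling alone would predict, and it is the only substantive ingredient; the choice of $\rho$, the identity $\nu=\mu(2-\gamma)/2$, and the Gaussian-type tail estimate are all routine.
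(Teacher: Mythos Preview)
Your argument is correct and, while it shares the paper's overall scheme (conservativity plus a tail bound coming from the exponential upper estimate), the execution is genuinely more streamlined than the paper's. The paper first passes through the optimised bound of Corollary~\ref{cor: 3. a explicit upper bound-2}, then splits $\Gamma\le h_{1}+h_{2}$ into a Gaussian piece and a sub-Gaussian piece, rescales each separately to bound the tails, and finally treats the degenerate case $\mu=1$ by hand using the explicit quadratic formula for $m(t,x)$. You instead go back to the pre-optimised $\alpha$-parametrised bound, pick the single exponent $\rho=s^{-(2-\gamma)/2}$ (using the identity $\nu=\mu(2-\gamma)/2$ so that $\rho^{\mu}s^{\nu}=1$), and integrate the resulting purely exponential tail in one stroke; this avoids the $h_{1}/h_{2}$ decomposition, the separate $\mu=1$ analysis, and the somewhat delicate incomplete-Gamma estimate for $\exp(-|\xi|^{\mu/(\mu-1)})$. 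Both routes arrive at the same radius $\tilde R(s)=Cs^{(2-\gamma)/2}\ln\frac{1}{s}$ for exactly the reason you isolate: in the supercritical case the prefactor $s^{-n(\gamma-1)/2}$ blows up and only the logarithmic correction converts $e^{-\rho R}$ into a tunable power $s^{C}$ that can absorb it. The paper's approach has the minor advantage of also yielding the explicit two-regime upper bound (\ref{eq: 3. upper bound-2}) along the way, but for the purpose of this proposition your argument is both shorter and more transparent.
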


\begin{proof}
Firstly, when $\mu>1$, we have
\begin{equation}
\Gamma(t,x;\tau,\xi)\leq h_{1}(t-\tau,x-\xi)+h_{2}(t-\tau,x-\xi),\label{eq: 3. rough explicit upper bound}
\end{equation}
where
\[
h_{1}(t,x)=\frac{C_{1}}{t^{n/2}}\exp\left(-\frac{1}{C_{2}}\left(\frac{\vert x\vert^{2}}{t}\right)\right),\quad h_{2}(t,x)=\frac{C_{1}}{t^{n/2}}\exp\left(-\frac{1}{C_{2}}\left(\frac{\vert x\vert^{\mu}}{t^{\nu}}\right)^{\frac{1}{\mu-1}}\right).
\]
Thus it is enough to prove that 
\[
\int_{B(x,\tilde{R}(t-\tau))^{c}}h_{1}(t-\tau,x-\xi)+h_{2}(t-\tau,x-\xi)d\xi\leq1-\delta.
\]
Without lose of generality, we can assume $\tau=0$ and $x=0$. By
the following change of variable
\[
\int_{B(0,RC_{2}^{1/2}t^{1/2})^{c}}\frac{C_{1}}{t^{n/2}}\exp\left(-\frac{1}{C_{2}}\left(\frac{\vert\xi\vert^{2}}{t}\right)\right)d\xi=C\int_{B(0,R)^{c}}\exp\left(-\vert\xi\vert^{2}\right)d\xi,
\]
we have
\[
\int_{B(x,R_{1}(t))^{c}}\frac{C_{1}}{t^{n/2}}\exp\left(-\frac{1}{C_{2}}\left(\frac{\vert\xi\vert^{2}}{t}\right)\right)d\xi\leq\frac{1-\delta}{2}
\]
with $R_{1}(t)=Ct^{1/2}$ for some sufficiently large constant $C>0$.
For the second term, since $\frac{\nu}{\mu}=\frac{2-\gamma}{2}\leq\frac{1}{2}$,
it follows that 
\begin{multline*}
\int_{B(0,RC_{2}^{(\mu-1)\nu/\mu}t^{\nu/\mu})^{c}}\frac{C_{1}}{t^{n/2}}\exp\left(-\frac{1}{C_{2}}\left(\frac{\vert\xi\vert^{\mu}}{t^{\nu}}\right)^{\frac{1}{\mu-1}}\right)d\xi=\\
\frac{C}{t^{n(\frac{1}{2}-\frac{\nu}{\mu})}}\int_{B(0,R)^{c}}\exp\left(-\vert\xi\vert^{\frac{\mu}{\mu-1}}\right)d\xi.
\end{multline*}
Setting $\Phi(R)=\int_{B(0,R)^{c}}\exp\left(-\vert\xi\vert^{\frac{\mu}{\mu-1}}\right)d\xi$,
then one has $\Phi(R)\leq Ce^{-R}$ by $\frac{\mu}{\mu-1}>1$ . So
setting $R=C(1-(\frac{1}{2}-\frac{\nu}{\mu})\ln t)$ for some $C$,
we obtain that
\[
\int_{B(0,R)^{c}}\exp\left(-\vert\xi\vert^{\frac{\mu}{\mu-1}}\right)d\xi\leq Ce^{-R}\leq\frac{t^{n(\frac{1}{2}-\frac{\nu}{\mu})}}{C_{1}}\cdot\frac{1-\delta}{2},
\]
and therefore
\[
\int_{B(0,R_{2}(t))^{c}}\frac{C_{1}}{t^{n/2}}\exp\left(-\frac{1}{C_{2}}\left(\frac{\vert\xi\vert^{\mu}}{t^{\nu}}\right)^{\frac{1}{\mu-1}}\right)d\xi\leq\frac{1-\delta}{2}
\]
with 
\[
R_{2}(t)=Ct^{\nu/\mu}(1+(\frac{1}{2}-\frac{\nu}{\mu})\ln\frac{1}{t})=Ct^{(2-\gamma)/2}(1+(\frac{1}{2}-\frac{2-\gamma}{2})\ln\frac{1}{t})
\]
for some constant $C$. When $t$ is small enough, $R_{1}(t)\leq R_{2}(t)$
and we obtain the radius $\tilde{R}(t)=R_{2}(t)$. 

When $\mu=1$, we have $\nu=\frac{2-\gamma}{2}\leq\frac{1}{2}$ and,
by using the elementary inequality $(a-b)^{2}+b^{2}\geq\frac{a^{2}}{2}$,

\begin{align*}
 & \int_{B(0,Rt^{\nu})^{c}}\frac{C_{1}}{t^{n/2}}\exp\left(-\frac{(C_{1}\Lambda t^{\nu}-\vert\xi\vert)^{2}}{4C_{1}t}\right)d\xi\\
 & \quad=\frac{C}{t^{n(\frac{1}{2}-\nu)}}\int_{B(0,R)^{c}}\exp\left(-\frac{(C_{1}\Lambda-\vert\xi\vert)^{2}}{C_{1}t^{1-2\nu}}\right)d\xi\\
 & \quad\leq\frac{C}{t^{n(\frac{1}{2}-\nu)}}\int_{B(0,R)^{c}}\exp\left(-\frac{(C_{1}\Lambda-\vert\xi\vert)^{2}}{C_{1}}\right)d\xi\\
 & \quad\leq\frac{C}{t^{n(\frac{1}{2}-\nu)}}\int_{B(0,R)^{c}}\exp\left(-\frac{\vert\xi\vert^{2}}{2C_{1}}+C_{1}^{2}\Lambda^{2}\right)d\xi.
\end{align*}
Let $\Phi(R)=\int_{B(0,R)^{c}}\exp\left(-\vert\xi\vert^{2}\right)d\xi$.
Then $\Phi(R)\leq Ce^{-R}$ for some universal constant. So we still
take 
\[
\tilde{R}(t)=Ct^{(2-\gamma)/2}(1+(\frac{1}{2}-\frac{2-\gamma}{2})\ln\frac{1}{t})
\]
to obtain 
\[
\int_{B(0,\tilde{R}(t))^{c}}\frac{C_{1}}{t^{n/2}}\exp\left(-\frac{(C_{1}\Lambda t^{\nu}-\vert\xi\vert)^{2}}{4C_{1}t}\right)d\xi\leq1-\delta.
\]
Clearly, when $\gamma=1$, $\tilde{R}(t)$ is just $Ct^{1/2}$. Since
we only need $\tilde{R}(t)$ for small $t$, and under this condition
$\ln\frac{1}{t}\gg1$. Thus taking $\tilde{R}(t)=Ct^{(2-\gamma)/2}\ln\frac{1}{t}$
when $\gamma>1$ will do.
\end{proof}
\begin{rem*}
\textcolor{black}{Although estimate (\ref{eq: 3. rough explicit upper bound})
seems better than (\ref{eq: 3. upper bound-2}), actually it can be
shown that this observation will not affect the result. So based on
Corollary \ref{cor: 3. a explicit upper bound-2}, this $\tilde{R}(t)$
is the smallest cone radius such that we can derive a lower bound
of this form inside the cone. }
\end{rem*}

\subsection{The lower bound}

Let $T>0$ and $x\in\rn$. To prove the lower bound, Nash's idea is
to consider the quantity
\[
G_{r}(t,x)=\int_{\rn}\ln(\Gamma(T,x;T-t,\xi))\mu_{r}(d\xi)=\int_{\rn}\ln(\Gamma_{T}^{*}(t,\xi;0,x))\mu_{r}(d\xi)
\]
for $t\in[0,T]$, where $\mu_{r}(x)=\frac{1}{r^{n/2}}\exp\left(-\frac{\pi\vert x\vert^{2}}{r}\right)$
as defined in Lemma \ref{lem: poincare inequality}. Then Jensen's
inequality implies that $G_{r}(t,x)\leq0$. We will write it as $G(t,x)$
if $r=1$. If we have $G_{r}(T,x)>-C$ for some positive constant
$C$, then we can derive a lower bound for $\Gamma(T,x;0,\xi)$. Consider
the time derivative of $G_{r}(t,x)$
\begin{align}
G_{r}'(t,x) & =\int_{\mathbb{R}^{n}}\partial_{t}\ln(\varGamma(T,x;T-t,\xi))\;\mu_{r}(d\xi)\nonumber \\
 & =\int_{\mathbb{R}^{n}}\left\langle \frac{2\pi\xi}{r},a(T-t,\xi)\cdot\nabla_{\xi}\ln\varGamma(T,x;T-t,\xi)\right\rangle \mu_{r}(d\xi)\nonumber \\
 & \quad+\int_{\mathbb{R}^{n}}\left\langle \nabla_{\xi}\ln\varGamma(T,x;T-t,\xi),a(T-t,\xi)\cdot\nabla_{\xi}\ln\varGamma(T,x;T-t,\xi)\right\rangle \mu_{r}(d\xi)\nonumber \\
 & \quad+\int_{\mathbb{R}^{n}}\left\langle b(T-t,\xi),\nabla_{\xi}\ln\varGamma(T,x;T-t,\xi)\right\rangle \mu_{r}(d\xi).\label{eq: 4. equation for G 1}
\end{align}
In the following subsections, we will estimate $G_{r}(t,x)$ under
varies conditions on $b$ and hence obtain a lower bound of $\Gamma$. 

We will separate the critical and supercritical cases. In critical
case $\gamma=1$, we will only consider the case where $l=\infty,q=n$,
which is the only case that regularity theory is missing. Since in
the critical case, $\Vert b\Vert_{L_{t}^{\infty}L_{x}^{n}}$ is invariant
under scaling, we do not need to worry about explicitly how the constant
depends on $\Lambda$. Hence we will only need to estimate $G(1,x)$
and obtain the estimate of $G(t,x)$ for all $t$ by scaling. In supercritical
case, in order to use scaling, we need to find out how the constants
appearing in lower bounds depend on $\Lambda$, and therefore it is
not a good idea to use the scaling argument. So we will alter the
strategy to estimate $G_{r}(t,x)$ for all $t$ directly. 

\subsubsection{Critical case $L^{\infty}(0,T;L^{n}(\protect\rn))$}

In the critical case $L^{\infty}(0,T;L^{n}(\rn))$, we can obtain
the following Gaussian lower bound and hence complete the proof of
Theorem \ref{The-fundamental-solution-ln}. 
\begin{lem}
\label{lem: 4. Lower bound}For any $x\in B(0,2)$, there is a constant
$C>0$ depending only on $n,\lambda,\Lambda=\Vert b\Vert_{L^{\infty}(0,T;L^{n}(\rn))}$,
such that

\begin{equation}
G(1,x)=\int_{\mathbb{R}^{n}}\ln(\varGamma(1,x;0,\xi))\;\mu(d\xi)\geq-C,\label{eq: 4.  lowerbound step Ln-infty 1}
\end{equation}
and hence 
\begin{equation}
\varGamma(2,x;0,\xi)\geq e^{-2C},\qquad x,\xi\in B(0,2).\label{eq: 4.  lowerbound step Ln-infty 2}
\end{equation}
\end{lem}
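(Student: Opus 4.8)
The plan is to follow Nash's strategy of controlling the time derivative of $G_r(t,x)$, specialized to $r=1$ and $\gamma=1$, $l=\infty$, $q=n$. Starting from the identity \eqref{eq: 4. equation for G 1} for $G'(t,x)$, the second term on the right-hand side is a quadratic form in $\nabla_\xi \ln\varGamma$ which, by ellipticity \eqref{eq: 1. ellipticity}, is bounded below by $\lambda \int |\nabla_\xi \ln\varGamma|^2 \,\mu(d\xi)$; this is the ``good'' gain term. The first term (coming from differentiating $\mu_r$) and the third term (the drift term) must be absorbed into this good term plus a bounded remainder. For the first term, Cauchy--Schwarz and $|\xi|^2 e^{-|\xi|^2/2}$ being integrable give a bound of the form $\frac{\lambda}{4}\int|\nabla_\xi\ln\varGamma|^2\mu(d\xi) + C$. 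The main work is the drift term $\int \langle b(T-t,\xi), \nabla_\xi \ln\varGamma\rangle \mu(d\xi)$.

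To handle the drift term I would write it, with $w = \ln\varGamma$, as $\int \langle b, \nabla w\rangle\, \mu(d\xi)$ and use $b \in L^\infty_t L^n_x$: by Hölder with exponents $n$, $\frac{2n}{n-2}$, and $2$ (the first for $b$, the Sobolev exponent for one factor, and $L^2(\mu)$ for $\nabla w$), one gets $|\text{drift}| \le \Lambda\, \|\,|\nabla w|\,\|_{L^2(\mu)} \cdot (\text{something controlled by a weighted Sobolev norm of a power of }\varGamma)$. The cleanest route is probably to introduce $v = \varGamma^{1/2}$ (or work with $\varGamma^{1/2}\mu^{1/2}$) so that $\nabla w = 2\nabla v / v$ and $|\nabla w|^2 \mu = 4|\nabla v|^2 \mu / v^2$ — but since $\int \varGamma\, d\xi = 1$ we have good $L^1$ control on $\varGamma$ itself, and combined with the Gaussian weight the Sobolev/Gagliardo--Nirenberg inequality on $\rn$ lets us bound $\|\varGamma^{1/2}\|_{L^{2n/(n-2)}}$ by $\|\nabla(\varGamma^{1/2})\|_{L^2} \sim (\int |\nabla w|^2 \varGamma)^{1/2}$. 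The divergence-free condition is what makes $\int \langle b, \nabla\varGamma\rangle\,d\xi = 0$, which should be used to replace the unweighted integral by the genuinely weighted one $\int\langle b,\nabla w\rangle(\mu - c)\,d\xi$ for a suitable constant $c = \mu(\xi)$, thereby producing the gradient of a compactly-concentrated weight and gaining smallness — this is the step analogous to the divergence-free cancellations used in Lemma~\ref{lem5} and Lemma~\ref{lem6}.

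Once the drift term is bounded by $\frac{\lambda}{4}\int|\nabla_\xi w|^2\mu(d\xi) + C(n,\lambda,\Lambda)$, we arrive at a differential inequality of the shape
\[
G'(t,x) \ge \frac{\lambda}{2}\int_{\rn} |\nabla_\xi \ln\varGamma(T,x;T-t,\xi)|^2 \,\mu(d\xi) - C.
\]
Now I would invoke the Poincar\'e--Wirtinger inequality of Lemma~\ref{lem: poincare inequality} (with $p=2$, $r=1$) to bound $\int |\ln\varGamma - \overline{\ln\varGamma}|^2 \mu(d\xi) \le C \int|\nabla_\xi\ln\varGamma|^2\mu(d\xi)$; but more to the point, one needs to relate $G(t,x) = \overline{\ln\varGamma}$ itself to the $L^2(\mu)$ oscillation. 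The standard Nash trick is that $e^{G_r(t,x)} \le \int \varGamma \,\mu(d\xi) \le$ (sup-type bound), giving an upper bound on $G$, while the gradient term controls the oscillation; combining with the known on-diagonal upper bound from Theorem~\ref{thm: 3. ln upper bound} (which gives $\varGamma(T,x;T-t,\xi) \le C t^{-n/2}\exp(-|x-\xi|^2/(Ct))$, hence an explicit upper bound for $G(t,x)$ near $t=0$), one gets a closed differential inequality forcing $G(t,x)$ to stay bounded below, uniformly for $x \in B(0,2)$, on $t \in [0,1]$. Evaluating at $t=1$ yields \eqref{eq: 4.  lowerbound step Ln-infty 1}.

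For the final conclusion \eqref{eq: 4.  lowerbound step Ln-infty 2}: from $G(1,x) \ge -C$, i.e. $\int \ln\varGamma(1,x;0,\xi)\,\mu(d\xi) \ge -C$, together with the Gaussian upper bound on $\varGamma(1,x;0,\xi)$, a standard argument (the set where $\ln\varGamma$ is very negative cannot carry too much $\mu$-mass, since the positive part of $\ln\varGamma$ is bounded by the upper bound) shows that $\varGamma(1,x;0,\xi) \ge e^{-C'}$ on a subset of $\xi$ of $\mu$-measure bounded below, and in particular one can locate such a point in any fixed ball. Then the Chapman--Kolmogorov equation $\varGamma(2,x;0,\xi) = \int \varGamma(2,x;1,\zeta)\varGamma(1,\zeta;0,\xi)\,d\zeta$ together with the same estimate applied to the adjoint (which, by the divergence-free duality $\varGamma(t,x;\tau,\xi) = \varGamma_T^*(T-\tau,\xi;T-t,x)$ noted in Section~2, satisfies the same lower bound) upgrades the almost-everywhere lower bound to the pointwise bound $\varGamma(2,x;0,\xi)\ge e^{-2C}$ for all $x,\xi \in B(0,2)$. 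The main obstacle I anticipate is the drift estimate: making the Hölder/Sobolev bookkeeping work so that the constant depends only on $n,\lambda,\Lambda$ and not on the (assumed but quantitatively irrelevant) smoothness of $b$, which is exactly where the divergence-free cancellation must be exploited carefully rather than bounding $|b\cdot\nabla w|$ crudely.
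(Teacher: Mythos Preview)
Your overall outline follows the paper's (Nash/Stroock) strategy, but two steps are off.

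\textbf{The drift term is much simpler than you propose.} Your three-way H\"older with exponents $n$, $\tfrac{2n}{n-2}$, $2$ is actually fine, but you misidentify the middle factor: it is simply $\mu^{1/2}$, and $\|\mu^{1/2}\|_{L^{2n/(n-2)}(\rn)}$ is a finite constant depending only on $n$. There is nothing involving $\varGamma$ in that factor. Equivalently (and this is what the paper does): since $\mu$ is a \emph{bounded probability density}, one has $\|b\|_{L^2(\mu)}\le\|b\|_{L^n(\mu)}\le C\|b\|_{L^n(\rn)}$ for $n\ge2$, whence
\[
\Bigl|\int_{\rn}\langle b,\nabla_\xi\ln\varGamma\rangle\,\mu(d\xi)\Bigr|\le C\Lambda\,\|\nabla_\xi\ln\varGamma\|_{L^2(\mu)},
\]
which is absorbed by Young into $\tfrac{\lambda}{4}\|\nabla_\xi\ln\varGamma\|_{L^2(\mu)}^2+C(n,\lambda,\Lambda)$. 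No Sobolev embedding, no $v=\varGamma^{1/2}$, and no divergence-free cancellation is used at this point. Your detour through $\varGamma^{1/2}$ conflates the Nash $G$-functional setup (fixed Gaussian weight $\mu$) with entropy-type arguments (weight $\varGamma$), and the divergence-free identity $\int\langle b,\nabla\varGamma\rangle\,d\xi=0$ is irrelevant here because the integral carries the weight $\mu$, not $1$. The divergence-free condition enters this lemma only indirectly, through the Gaussian upper bound (Theorem~\ref{thm: 3. ln upper bound}) and the self-duality used at the very end.

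\textbf{The passage to a closed differential inequality is the crux and you gloss over it.} After Poincar\'e--Wirtinger one has $G'(t,x)\ge -C+c\,\|\ln\varGamma-G\|_{L^2(\mu)}^2$, which says nothing yet about $G$ itself. The actual mechanism is: for $K>0$, restrict to $\{\ln\varGamma\ge-K\}$; there $\ln\varGamma-G\ge-K-G$ with $G\le0$, so
\[
\|\ln\varGamma-G\|_{L^2(\mu)}^2\ \ge\ \tfrac12\,G^2\,\mu\{\ln\varGamma\ge-K\}-K^2.
\]
The $\mu$-measure of $\{\ln\varGamma\ge-K\}$ is then bounded below uniformly for $t\in[\tfrac12,1]$, $x\in B(0,2)$, using the Gaussian upper bound together with $\int\varGamma\,d\xi=1$: most of the $\varGamma$-mass lies in a fixed ball $B(0,R)$, on which $\varGamma\le M$, so the super-level set has Lebesgue (hence $\mu$-) measure bounded below once $K$ is large. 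This yields the Riccati inequality $G'\ge -C_1+C_2G^2$, and Lemma~\ref{lem: 4. differential inequality} gives $G(1,x)\ge -C$. Your description (``combining with the known on-diagonal upper bound\dots one gets a closed differential inequality'') misses this: the upper bound is not used to bound $G$ near $t=0$, but to produce a lower bound on the measure of a super-level set of $\varGamma$.

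Your final paragraph---Chapman--Kolmogorov, Jensen against $\mu$, and the same estimate for the adjoint---is exactly how the paper concludes \eqref{eq: 4.  lowerbound step Ln-infty 2}.
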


\begin{proof}
If we fix $x\in B(0,2)$, $T=1$ and $r=1$ in equation (\ref{eq: 4. equation for G 1}),
then condition (\ref{eq: 1. ellipticity}) and $b\in L^{\infty}(0,T;L^{n}(\rn))$
implies
\begin{align*}
G'(t,x) & \geq-\frac{C}{\lambda}\Vert\xi\Vert_{L^{2}(\mu)}\Vert\nabla_{\xi}\ln\varGamma(1,x;1-t,\xi)\Vert_{L^{2}(\mu)}+\lambda\Vert\nabla_{\xi}\ln\varGamma(1,x;1-t,\xi)\Vert_{L^{2}(\mu)}^{2}\\
 & \quad-C\Vert b(1-t)\Vert_{L^{n}(\mu)}\Vert\nabla_{\xi}\ln\varGamma(1,x;1-t,\xi)\Vert_{L^{2}(\mu)}\\
 & \geq-C(n,\lambda,\Lambda)+\frac{\lambda}{2}\Vert\nabla_{\xi}\ln\varGamma(1,x;1-t,\xi)\Vert_{L^{2}(\mu)}^{2}\\
 & \geq-C(n,\lambda,\Lambda)+C(\lambda)\left\Vert \ln\varGamma(1,x;1-t,\cdot)-G(t,x)\right\Vert _{L^{2}\left(\mu\right)}^{2},
\end{align*}
where the last step follows the Poincaré-Wirtinger inequality (\ref{eq: 4. poincare inequality 1})
for the Gaussian measure\textcolor{black}{. The rest of the argument
of the proof follows exactly the same as in \cite{stroock1988diffusion}.
For completeness, we include the full proof here. S}ince $G(t,x)\leq0$,
for any $K>0$, using $(a-b)^{2}\geq\frac{a^{2}}{2}-b^{2}$ we have
\begin{align*}
 & \left\Vert \ln\varGamma(1,x;1-t,\cdot)-G(t,x)\right\Vert _{L^{2}\left(\mu\right)}^{2}\\
 & \quad\geq\int_{\left\{ \ln\varGamma(1,x;1-t,\xi)\geq-K\right\} }\left(\ln\varGamma(1,x;1-t,\xi)-G(t,x)\right)^{2}\mu(d\xi)\\
 & \quad=\int_{\left\{ \ln\varGamma(1,x;1-t,\xi)\geq-K\right\} }\left(\ln\varGamma(1,x;1-t,\xi)-G(t,x)-K+K\right)^{2}\mu(d\xi)\\
 & \quad\geq\frac{1}{2}\int_{\left\{ \ln\varGamma(1,x;1-t,\xi)\geq-K\right\} }\left(\ln\varGamma(1,x;1-t,\xi)+K-G(t,x)\right)^{2}\mu(d\xi)-K^{2}\\
 & \quad\geq\frac{1}{2}\int_{\left\{ \ln\varGamma(1,x;1-t,\xi)\geq-K\right\} }G(t,x)^{2}\mu(d\xi)-K^{2}\\
 & \quad=\frac{1}{2}G(t,x)^{2}\mu\{\ln\varGamma(1,x;1-t,\xi)\geq-K\}-K^{2}.
\end{align*}
By the upper bound in Theorem \ref{thm: 3. ln upper bound}, we can
find a $R>0$ depending only on $n,\lambda,\Lambda$ such that 
\[
\int_{\left\{ |\xi|>R\right\} }\varGamma(1,x;1-t,\xi)\;d\xi<\frac{1}{4},\qquad\mbox{ for all }t\in(0,1],x\in B(0,2).
\]
Also the upper bound implies that there exists a $M>0$ depending
only on $n,\lambda$ such that $\varGamma(1,x;1-t,\xi)\leq M$ for
$t\in[\frac{1}{2},1]$. So one has 
\begin{align*}
\frac{3}{4} & \leq\int_{\left\{ |\xi|\leq R\right\} }\varGamma(1,x;1-t,\xi)\;d\xi\\
 & \leq\vert B(0,R)\vert e^{-K}+M(2\pi)^{n/2}e^{R^{2}/2}\mu\left\{ \xi\in\mathbb{R}^{n}:\ln\varGamma(x,1;\xi,1-t)\geq-K\right\} .
\end{align*}
Now we can choose a large enough $K$ depending on $R$ such that
\[
\mu\left\{ \xi\in\mathbb{R}^{n}:\ln\varGamma(x,1;\xi,1-t)\geq-K\right\} \geq\frac{1}{2M(2\pi)^{n/2}e^{R^{2}/2}}.
\]
Combining these all together, we have 
\begin{equation}
\left\Vert \ln\varGamma(1,x;1-t,\cdot)-G(t,x)\right\Vert _{L^{2}\left(\mu\right)}^{2}\geq-K^{2}+\frac{1}{4M(2\pi)^{n/2}e^{R^{2}/2}}G(t,x)^{2},\label{eq: 4. lower bound l2-infty estimate 1}
\end{equation}
and therefore
\[
G'(t,x)\geq-C_{1}+C_{2}G(t,x)^{2}.
\]
Together with the fact that $G\leq0$, it follows from Lemma \ref{lem: 4. differential inequality}
that 
\begin{equation}
G(1,x)\geq\min\left\{ -C_{1}-2\sqrt{\frac{C_{1}}{C_{2}}},-\frac{8}{3C_{2}}\right\} =-C.\label{eq:str0.1}
\end{equation}
Finally the Chapman-Kolmogorov equation and Jensen's inequality yield
that
\begin{align*}
\ln\varGamma(2,x;0,\xi) & =\ln\left(\int_{\mathbb{R}^{n}}\varGamma\left(2,x;1,z\right)\varGamma(1,z;0,\xi)dz\right)\\
 & =\ln\left(\int_{\mathbb{R}^{n}}(2\pi)^{n/2}e^{|z|^{2}/2}\varGamma\left(2,x;1,z\right)\varGamma(1,z;0,\xi)\mu(dz)\right)\\
 & \geq\ln\left(\int_{\mathbb{R}^{n}}\varGamma\left(2,x;1,z\right)\varGamma(1,z;0,\xi)\mu(dz)\right)\\
 & \geq\int_{\mathbb{R}^{n}}\ln\varGamma\left(2,x;1,z\right)\mu(dz)+\int_{\mathbb{R}^{n}}\ln\varGamma(1,z;0,\xi)\mu(dz)\\
 & \geq-2C,
\end{align*}
where the estimate of $\int_{\mathbb{R}^{n}}\ln\varGamma\left(2,x;1,z\right)\mu(dz)\geq-C$
is simply a shift in time and $\int_{\mathbb{R}^{n}}\ln\varGamma(1,z;0,\xi)\mu(dz)\geq-C$
can be obtained by analyzing the dual operator with the same argument.
\end{proof}
\begin{proof}
[Proof of Theorem \ref{The-fundamental-solution-ln}]Recall the scaling
invariant property, i.e. for any $\rho>0$ and $z\in\mathbb{R}^{n}$,
\begin{equation}
\varGamma(\rho^{2}t,\rho x+z;0,\rho\xi+z)=\rho^{-n}\varGamma^{(a_{\rho,z},b_{\rho,z})}(t,x;0,\xi)
\end{equation}
where $a_{\rho,z}(t,x)=a(\rho^{2}t,\rho x+z)$, $b_{\rho,z}(t,x)=\rho b(\rho^{2}t,\rho x+z)$
and $\varGamma^{(a,b)}$ is the fundamental solution associated with
$(a,b)$. The transformation $(a,b)\rightarrow(a_{\rho,z},b_{\rho,z})$
preserves the ellipticity constant $\lambda$ of $a$ and more importantly
the $L^{\infty}(0,T;L^{n}(\rn))$ norms of $b$. So we may apply Lemma
\ref{lem: 4. Lower bound} to $\varGamma^{(a_{\rho,z},b_{\rho,z})}$
to deduce that 
\begin{equation}
\varGamma(2t,x;0,\xi)\geq\frac{e^{-2C}}{t^{n/2}},\quad\vert\xi-x\vert<4t^{\frac{1}{2}}.
\end{equation}
Next, to obtain a lower bound for all $x,\xi\in\rn$, we use the Chapman-Kolmogorov
equation. Suppose $\vert x-\xi\vert^{2}\in[k,k+1)$, we set $\xi_{m}=\xi+\frac{m}{k+1}(x-\xi)$
and $B_{m}=B(\xi_{m},\frac{1}{k^{1/2}})$ for $0\leq m\leq k+1$.
For any $z_{m}\in B_{m}$, we will have $\vert z_{m}-z_{m-1}\vert<\frac{3}{k^{1/2}}$
and hence $\Gamma(\frac{2m}{k+1},z_{m};\frac{2(m-1)}{k+1},z_{m-1})\geq k^{n/2}e^{-2C}$.
By the Chapman-Kolmogorov equation we obtain that
\begin{align*}
\Gamma(2,x;0,\xi) & \geq\int_{B_{1}}\cdots\int_{B_{k}}\prod_{m=1}^{k+1}\Gamma(\frac{2m}{k+1},z_{m};\frac{2(m-1)}{k+1},z_{m-1})dz_{1}\cdots dz_{k}\\
 & \geq(k^{n/2}e^{-2C})^{k+1}(\vert B(1)\vert k^{-n/2})^{k}.
\end{align*}
Choose a constant $C'$ such that $e^{-C'}\leq\vert B(1)\vert e^{-2C}$
to obtain 
\[
\Gamma(2,x;0,\xi)\geq e^{-2C}e^{-C'\vert x-\xi\vert^{2}}.
\]
Finally, we use scaling again to obtain that
\[
\Gamma(t,x;0,\xi)\geq\frac{1}{Ct^{n/2}}\exp\left(-C\frac{\vert x-\xi\vert^{2}}{t}\right).
\]
Now the proof is complete.
\end{proof}

\subsubsection{Supercritical cases}

Now we consider the supercritical case $1<\gamma<2$. We will estimate
$G_{r}(t,x)$ directly without using scaling argument.
\begin{lem}
\label{lem: 4. Lower bound general}Suppose $q\geq2$, $l\geq2$,
$1<\gamma<2$ and $\tilde{R}(t)$ is defined as in Proposition \ref{prop: 3. lower bound in a cone}.
For any $\kappa>0$, $x\in B(0,\kappa\tilde{R}(t))$ and $t>0$ small
enough, there is a constant $C>0$ depending only on $\kappa,l,q,n,\lambda,\Lambda=\Vert b\Vert_{L^{l}(0,T;L^{q}(\rn))}$,
such that

\begin{equation}
G_{r}(t,x)\geq-C(\lambda)(\frac{t}{r}+r^{-n/q}t^{\frac{l-2}{l}}\Lambda^{2})-C(\frac{r}{t})^{n/2+1}\exp\left(\frac{\pi\vert\tilde{R}(t)\vert^{2}}{Cr}\right).\label{eq: 4.  lowerbound general step 1}
\end{equation}
\end{lem}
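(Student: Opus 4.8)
The plan is to estimate the time derivative $G_r'(t,x)$ from the representation \eqref{eq: 4. equation for G 1}, absorb the quadratic gradient term using the Poincar\'e--Wirtinger inequality \eqref{eq: 4. poincare inequality 2} for the Gaussian measure $\mu_r$, and then feed the resulting Riccati-type integral inequality into Lemma \ref{lem: 2. generalized differential inequality}. The first step is to bound the three terms on the right-hand side of \eqref{eq: 4. equation for G 1}. For the middle term, ellipticity \eqref{eq: 1. ellipticity} gives a lower bound $\lambda\Vert\nabla_\xi\ln\varGamma\Vert_{L^2(\mu_r)}^2$. For the first term, Cauchy--Schwarz together with $\Vert\xi\Vert_{L^2(\mu_r)}^2=Cr$ yields a bound of the form $\tfrac{\lambda}{4}\Vert\nabla_\xi\ln\varGamma\Vert_{L^2(\mu_r)}^2+C\lambda^{-1}r^{-1}\cdot r$, i.e. a term $\le C(\lambda)\cdot 1$ after the Young split (note the $t/r$ scaling will come in through a careful tracking of the $\mu_r$-normalization — here one must keep the factor $\pi/r$ from $\nabla_\xi\mu_r$ in mind). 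For the drift term, H\"older in $\xi$ with the splitting $1=\tfrac1q+\tfrac{n-2}{2n}+\tfrac1n\cdot\tfrac{?}{}$ — more precisely, writing $\langle b,\nabla_\xi\ln\varGamma\rangle$ and using $\Vert b(T-t,\cdot)\Vert_{L^q(\mu_r)}\le r^{-n/(2q)}\Vert b(T-t,\cdot)\Vert_{L^q_x}$ together with a weighted Sobolev/interpolation inequality for $\mu_r$ — produces a bound $\varepsilon\Vert\nabla_\xi\ln\varGamma\Vert_{L^2(\mu_r)}^2+C(\varepsilon)r^{-n/q}\Vert b(T-t,\cdot)\Vert_{L^q_x}^2$.

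Once the gradient terms are collected, we obtain
\[
G_r'(t,x)\ge -C(\lambda)\Big(\tfrac1r\cdot\tfrac{?}{}+r^{-n/q}\Vert b(T-t,\cdot)\Vert_{L_x^q}^2\Big)+\tfrac{\lambda}{2}\Vert\nabla_\xi\ln\varGamma(T,x;T-t,\cdot)-G_r(t,x)\Vert_{L^2(\mu_r)}^2\cdot c(r),
\]
where the Poincar\'e constant $c(r)$ carries a factor $r^{-1}$ from \eqref{eq: 4. poincare inequality 2}. Integrating the bad terms over $t$ gives $\int_{T/2}^T r^{-n/q}\Vert b(T-s,\cdot)\Vert_{L_x^q}^2\,ds\le r^{-n/q}\Lambda^2 t^{(l-2)/l}$ by H\"older in time (since $l\ge2$), which is the second term on the right of \eqref{eq: 4.  lowerbound general step 1}, and $\int t^{-1}\,(\text{const})\,$ against the $t/r$ bookkeeping yields the $t/r$ term. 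This reduces matters to converting the lower bound on $G_r'$ into a lower bound on $G_r(t,x)$ itself.

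The mechanism for that conversion is exactly the argument of Lemma \ref{lem: 4. Lower bound}: one must show $\Vert\ln\varGamma(T,x;T-t,\cdot)-G_r(t,x)\Vert_{L^2(\mu_r)}^2\ge c\,G_r(t,x)^2-C$, which requires a uniform upper bound $\varGamma\le M$ on a time strip (available from Theorem \ref{upperbound} / Corollary \ref{cor: 3. a explicit upper bound-2}, after rescaling) together with the lower bound on the total mass of $\varGamma$ over a ball, namely Proposition \ref{prop: 3. lower bound in a cone} — this is where the restriction $x\in B(0,\kappa\tilde R(t))$ and ``$t$ small'' enters, and where $\tilde R(t)$ appears in the exponent: the set $\{\ln\varGamma\ge -K\}$ has $\mu_r$-measure bounded below only after one localizes to $B(x,\tilde R(t))$, and estimating $\mu_r(B(x,\tilde R(t)))$ from below costs a factor $(r/t)^{n/2}$ and a Gaussian factor $\exp(-\pi\tilde R(t)^2/(Cr))$ — hence, after the bookkeeping is inverted, the additive correction $C(r/t)^{n/2+1}\exp(\pi\tilde R(t)^2/(Cr))$ in \eqref{eq: 4.  lowerbound general step 1}. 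Feeding $G_r'\ge -\alpha(t)+\beta G_r^2$ (with the appropriate $\alpha,\beta$) into Lemma \ref{lem: 2. generalized differential inequality} then yields $G_r(T,x)\ge -\int_{T/2}^T\alpha - C\beta^{-1}T^{-1}$, which is precisely the claimed estimate after identifying the three contributions.

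The main obstacle will be the careful tracking of the $r$-dependence through every inequality: the Poincar\'e constant $(r/2\pi)$ in \eqref{eq: 4. poincare inequality 2}, the weights $r^{-n/(2q)}$ and $r^{-1/2}$ in the H\"older and Cauchy--Schwarz steps for $\mu_r$, and the lower bound $\mu_r(B(x,\tilde R(t)))\gtrsim (r/t)^{-n/2}\exp(-\pi\tilde R(t)^2/(Cr))$, all have to be reconciled so that $\beta$ comes out proportional to $(t/r)^{\text{(something)}}$ matched to the additive exponential term, and so that the time-integral of the drift contribution lands exactly on $r^{-n/q}t^{(l-2)/l}\Lambda^2$. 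A secondary technical point is justifying the weighted interpolation inequality controlling $\Vert f\Vert_{L^q(\mu_r)}$ by $\Vert\nabla f\Vert_{L^2(\mu_r)}$ and $\Vert f\Vert_{L^2(\mu_r)}$ with a constant of the right order in $r$ — this is a Gaussian-weighted analogue of the Sobolev embedding, and the hypotheses $q\ge2$, $l\ge2$, $1<\gamma<2$ are exactly what make the exponents in Young's inequality admissible.
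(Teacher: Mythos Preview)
Your overall architecture is exactly that of the paper: differentiate $G_r$, keep the quadratic gradient term via ellipticity, absorb the linear-in-gradient pieces by Young, apply the Poincar\'e--Wirtinger inequality \eqref{eq: 4. poincare inequality 2} to pass to $\Vert\ln\Gamma-G_r\Vert_{L^2(\mu_r)}^2$, convert that to $cG_r^2-C$ using Proposition \ref{prop: 3. lower bound in a cone} and the on-diagonal upper bound, and conclude with Lemma \ref{lem: 2. generalized differential inequality}. That is precisely the paper's proof, and your identification of where the three additive contributions in \eqref{eq: 4.  lowerbound general step 1} come from is correct.

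The one place where you go astray is the drift term. You propose bounding $\Vert b\Vert_{L^q(\mu_r)}$ and then appealing to a ``Gaussian-weighted Sobolev/interpolation inequality'' controlling $\Vert f\Vert_{L^q(\mu_r)}$ by $\Vert\nabla f\Vert_{L^2(\mu_r)}$ and $\Vert f\Vert_{L^2(\mu_r)}$. Such an embedding is \emph{not} available for Gaussian measures when $q>2$ (Gaussian measures satisfy only logarithmic Sobolev inequalities, not polynomial ones), so if you pursue that line you will get stuck. The paper avoids this entirely by splitting $\mu_r=\mu_r^{1/2}\cdot\mu_r^{1/2}$ and using H\"older with respect to Lebesgue measure:
\[
\int_{\rn}|b|\,|\nabla_\xi\ln\Gamma|\,\mu_r\,d\xi\le \Vert b\Vert_{L^q(\rn)}\,\Vert\mu_r^{1/2}\Vert_{L^{\frac{2q}{q-2}}(\rn)}\,\Vert\nabla_\xi\ln\Gamma\Vert_{L^2(\mu_r)},
\]
followed by the explicit computation $\Vert\mu_r^{1/2}\Vert_{L^{2q/(q-2)}}^{2}=\Vert\mu_r\Vert_{L^{q/(q-2)}}=C(q)r^{-n/q}$ (here $q\ge2$ ensures $\tfrac{2q}{q-2}\in[2,\infty]$). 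After Young this gives exactly $\varepsilon\Vert\nabla_\xi\ln\Gamma\Vert_{L^2(\mu_r)}^2+C(\varepsilon)r^{-n/q}\Vert b\Vert_{L^q_x}^2$, with no weighted embedding needed. Alternatively, your own line can be rescued trivially: since $q\ge2$ implies the conjugate exponent $q'\le2$ and $\mu_r$ is a probability measure, Jensen already gives $\Vert\nabla_\xi\ln\Gamma\Vert_{L^{q'}(\mu_r)}\le\Vert\nabla_\xi\ln\Gamma\Vert_{L^2(\mu_r)}$---so the ``Sobolev'' step you flagged as an obstacle is in fact empty. Once you replace that step, the rest of your sketch (H\"older in time using $l\ge2$ to produce $t^{(l-2)/l}\Lambda^2$, and the bookkeeping $\beta^{-1}T^{-1}=C(r/T)^{n/2+1}\exp(\pi\tilde R(T)^2/(Cr))$) goes through as written.
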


\begin{proof}
We fix a $T>0$. By the definition of $G_{r}(t,x)$, for $0<t_{1}<t_{2}\leq T$,
we can deduce that

\begin{align*}
 & G_{r}(t_{2},x)-G_{r}(t_{1},x)\\
 & \quad=\int_{t_{1}}^{t_{2}}\int_{\rn}\frac{\partial_{s}\Gamma(T,x;T-s,\xi)}{\Gamma(T,x;T-s,\xi)}\mu_{r}(d\xi)ds\\
 & \quad=\int_{t_{1}}^{t_{2}}\int_{\rn}\langle\frac{2\pi\xi}{r},a(T-s,\xi)\cdot\nabla_{\xi}\ln\Gamma(T,x;T-s,\xi)\rangle\mu_{r}(d\xi)ds\\
 & \quad\quad+\int_{t_{1}}^{t_{2}}\int_{\rn}\langle\nabla_{\xi}\ln\Gamma(T,x;T-s,\xi),a(T-s,\xi)\cdot\nabla_{\xi}\ln\Gamma(T,x;T-s,\xi)\rangle\mu_{r}(d\xi)ds\\
 & \quad\quad+\int_{t_{1}}^{t_{2}}\int_{\rn}\langle b(T-s,\xi),\nabla_{\xi}\ln\Gamma(T,x;T-s,\xi)\rangle\mu_{r}(d\xi)ds\\
 & \quad\geq-\int_{t_{1}}^{t_{2}}\frac{2\pi}{\lambda r}\Vert\xi\Vert_{L^{2}(\mu_{r})}\Vert\nabla_{\xi}\ln\Gamma(T,x;T-s,\cdot)\Vert_{L^{2}(\mu_{r})}ds\\
 & \quad\quad+\int_{t_{1}}^{t_{2}}\lambda\Vert\nabla_{\xi}\ln\Gamma(T,x;T-s,\cdot)\Vert_{L^{2}(\mu_{r})}^{2}ds\\
 & \quad\quad-\int_{t_{1}}^{t_{2}}\Vert b(T-s,\cdot)\Vert_{L^{q}(\rn)}\Vert\mu_{r}^{\frac{1}{2}}\Vert_{L^{\frac{2q}{q-2}}(\rn)}\Vert\nabla_{\xi}\ln\Gamma(T,x;T-s,\cdot)\Vert_{L^{2}(\mu_{r})}ds\\
 & \quad\geq-\int_{t_{1}}^{t_{2}}\frac{C(\lambda)}{r^{2}}\Vert\xi\Vert_{L^{2}(\mu_{r})}^{2}+C(\lambda)\Vert b(T-s,\cdot)\Vert_{L^{q}(\rn)}^{2}\Vert\mu_{r}^{\frac{1}{2}}\Vert_{L^{\frac{2q}{q-2}}(\rn)}^{2}ds\\
 & \quad\quad+\frac{\lambda}{2}\int_{t_{1}}^{t_{2}}\Vert\nabla_{\xi}\ln\Gamma(T,x;T-s,\cdot)\Vert_{L^{2}(\mu_{r})}^{2}ds.
\end{align*}
Here we set $\frac{2q}{q-2}=\infty$ when $q=2$. Since $l\geq2$,
we have 
\[
\int_{0}^{T}\Vert b(T-s,\cdot)\Vert_{L^{q}(\rn)}^{2}ds<\infty.
\]
For the last term in the equation above, we use the Poincaré-Wirtinger
inequality and obtain that 
\begin{multline*}
\int_{t_{1}}^{t_{2}}\Vert\nabla_{\xi}\ln\Gamma(T,x;T-s,\cdot)\Vert_{L^{2}(\mu_{r})}^{2}ds\geq\\
Cr^{-1}\int_{t_{1}}^{t_{2}}\int_{\rn}\vert\ln\Gamma(T,x;T-s,\xi)-G_{r}(s,x)\vert^{2}\mu_{r}(d\xi)ds.
\end{multline*}
 Since $G_{r}(t,x)\leq0$, using $(a-b)^{2}\geq\frac{a^{2}}{2}-b^{2}$,
the right-hand side can be estimated as
\begin{align*}
 & \int_{t_{1}}^{t_{2}}\int_{\rn}\vert\ln\Gamma(T,x;T-s,\xi)-G_{r}(s,x)\vert^{2}\mu_{r}(d\xi)ds\\
 & \quad\geq\int_{t_{1}}^{t_{2}}\int_{\left\{ \ln\varGamma(T,x;T-s,\xi)\geq-1\right\} }\left(\ln\varGamma(T,x;T-s,\xi)-G_{r}(s,x)-1+1\right)^{2}\mu_{r}(d\xi)ds\\
 & \quad\geq\frac{1}{2}\int_{t_{1}}^{t_{2}}\int_{\left\{ \ln\varGamma(T,x;T-s,\xi)\geq-1\right\} }\left(\ln\varGamma(T,x;T-s,\xi)+1-G_{r}(s,x)\right)^{2}\mu_{r}(d\xi)-1ds\\
 & \quad\geq\frac{1}{2}\int_{t_{1}}^{t_{2}}\int_{\left\{ \ln\varGamma(T,x;T-s,\xi)\geq-1\right\} }G_{r}(s,x)^{2}\mu_{r}(d\xi)-1ds\\
 & \quad=\frac{1}{2}\int_{t_{1}}^{t_{2}}G_{r}(s,x)^{2}\mu_{r}\{\ln\varGamma(T,x;T-s,\xi)\geq-1\}-1ds.
\end{align*}
By Proposition \ref{prop: 3. lower bound in a cone}, for any $x\in B(0,\kappa\tilde{R}(t))$,
we have 
\[
\int_{B(0,(C+\kappa)\tilde{R}(t))}\Gamma(T,x;T-t,\xi)d\xi\geq\frac{1}{2},
\]
which implies that 
\[
\mu_{r}\{\ln\varGamma(T,x;T-t,\xi)\geq-1\}\frac{Cr^{n/2}}{T^{n/2}}\exp\left(\frac{\pi\vert\tilde{R}(T)\vert^{2}}{Cr}\right)+\vert B(0,(C+\kappa)\tilde{R}(t))\vert e^{-1}\geq\frac{1}{2}
\]
for $t\in[\frac{T}{2},T]$. So if we take $T>0$ small enough such
that $\vert B(0,(C+\kappa)R(t))\vert e^{-1}\leq\frac{1}{4}$ for all
$t\in[0,T]$, then
\[
\mu_{r}\{\ln\varGamma(T,x;T-t,\xi)\geq-1\}\geq C(\frac{T}{r})^{n/2}\exp\left(-\frac{\pi\vert\tilde{R}(T)\vert^{2}}{Cr}\right).
\]
Also it is easy to calculate that $\Vert\xi\Vert_{L^{2}(\mu_{r})}^{2}=r$
and
\[
\Vert\mu_{r}^{\frac{1}{2}}\Vert_{L^{\frac{2q}{q-2}}(\rn)}^{2}=\Vert\mu_{r}\Vert_{L^{\frac{q}{q-2}}(\rn)}=C(q)r^{-n/q}.
\]
We now can conclude that

\begin{align*}
G_{r}(t_{2},x)-G(t_{1},x) & \geq-\int_{t_{1}}^{t_{2}}\frac{C(\lambda)}{r}+C(\lambda)r^{-n/q}\Vert b(T-s,\cdot)\Vert_{L^{q}(\rn)}^{2}+Cr^{-1}ds\\
 & \quad+Cr^{-1}(\frac{T}{r})^{n/2}\exp\left(-\frac{\pi\vert\tilde{R}(T)\vert^{2}}{Cr}\right)\int_{t_{1}}^{t_{2}}G_{r}(s,x)^{2}ds,
\end{align*}
for $\frac{T}{2}\leq t_{1}<t_{2}\leq T$. By Lemma \ref{lem: 2. generalized differential inequality}
and $l\geq2$, we have 
\[
G_{r}(T,x)\geq-C(\lambda)(\frac{T}{r}+r^{-n/q}T^{\frac{l-2}{l}}\Lambda^{2})-C(\frac{r}{T})^{n/2+1}\exp\left(\frac{\pi\vert\tilde{R}(T)\vert^{2}}{Cr}\right).
\]
\end{proof}
\begin{proof}
[Proof of Theorem \ref{lowerbound}]For $x,\xi\in B(0,\kappa\tilde{R}(t))$,
by using the Chapman-Kolmogorov equation we obtain that

\begin{align*}
\ln\varGamma(2T,x;0,\xi) & =\ln\int_{\rn}\varGamma(2T,x;T,z)\varGamma(T,z;0,\xi)dz\\
 & \geq\ln\int_{\rn}r^{n/2}\varGamma(2T,x;T,z)\varGamma(T,z;0,\xi)\mu_{r}(dz)\\
 & \geq\frac{n}{2}\ln r+\int_{\rn}\ln\varGamma(2T,x;T,z)\varGamma(T,z;0,\xi)\mu_{r}(dz)\\
 & =\frac{n}{2}\ln r+\int_{\rn}\ln\varGamma(2T,x;T,z)\mu_{r}(dz)+\int_{\rn}\ln\varGamma(T,z;0,\xi)\mu_{r}(dz)\\
 & \geq\frac{n}{2}\ln r-C(\lambda)(\frac{T}{r}+r^{-n/q}T^{\frac{l-2}{l}}\Lambda^{2})-C(\frac{r}{T})^{n/2+1}\exp\left(\frac{\pi\vert\tilde{R}(T)\vert^{2}}{Cr}\right),
\end{align*}
i.e. 
\[
\varGamma(2T,x;0,\xi)\geq r^{\frac{n}{2}}\exp\left[-C(\frac{T}{r}+r^{-n/q}T^{\frac{l-2}{l}}\Lambda^{2})-C(\frac{r}{T})^{n/2+1}\exp\left(\frac{\pi\vert\tilde{R}(T)\vert^{2}}{Cr}\right)\right].
\]
Now we can take maximum of the right-hand side over all positive $r$.
Recall $\tilde{R}(t)=Ct^{(2-\gamma)/2}\ln\frac{1}{t}$, if we take
$r=\tilde{R}(T)^{2}$, then the right-hand side becomes
\[
T^{\frac{n}{2}(2-\gamma)}(\ln\frac{1}{T})^{\frac{n}{2}}\exp\left[-CT^{\theta_{1}}(\ln\frac{1}{T})^{-2}-CT^{\theta_{2}}(\ln\frac{1}{T})^{-2n/q}-CT^{\theta_{3}}(\ln\frac{1}{T})^{(n+2)}\right],
\]
where $\theta_{1}=\gamma-1$, $\theta_{2}=1-\frac{2}{l}-\frac{n}{q}(2-\gamma)$,
and $\theta_{3}=(\frac{n}{2}+1)(1-\gamma)<0$. Clearly $\theta_{3}=\min\{\theta_{1},\theta_{2},\theta_{3}\}<0$.
Because we are considering only for small $t$, the dominant term
will be 
\[
\varGamma(2T,x;0,\xi)\geq\exp\left[-CT^{\theta_{3}}(\ln\frac{1}{T})^{(n+2)}\right],
\]
and the proof is complete.
\end{proof}
\textcolor{black}{We can use the Chapman-Kolmogorov equation to obtain
a positive lower bound on the whole space, but we prefer to omit the
details of computations. }
\begin{rem}
\textcolor{black}{Using full power of the }Poincaré-Wirtinger\textcolor{black}{{}
inequality and following similar arguments as above, we can actually
drop the assumptions that $q\geq2$ and $l\geq2$. We only need to
assume that $1\leq\gamma<2$ to obtain a lower bound.}
\end{rem}

\section{Application of the Aronson type estimate}

Assume that $b\in L^{\infty}(0,T;L^{n}(\rn))$. In the previous section,
we have proved the Aronson estimate 
\[
\frac{1}{Ct^{n/2}}\exp\left[-C\frac{\vert x-\xi\vert^{2}}{t}\right]\leq\Gamma(t,x;0,\xi)\leq\frac{C}{t^{n/2}}\exp\left[-\frac{\vert x-\xi\vert^{2}}{Ct}\right]
\]
where $C$ only depends on $(n,\lambda,\Lambda)$. As an application
of the Aronson type estimate, we will prove the uniqueness of Hölder
continuous weak solutions.

We denote the parabolic ball as $Q((t_{0},x_{0}),R)=(t_{0}-R^{2},t_{0})\times B(x_{0},R)$
and 
\[
\underset{Q((t_{0},x_{0}),R)}{Osc}u=\max_{Q((t_{0},x_{0}),R)}u-\min_{Q((t_{0},x_{0}),R)}u.
\]
Firstly, we will need Nash's continuity theorem.
\begin{thm}
\label{thm: 4. nash's continuity theorem}Suppose $u\in C^{1,2}(Q((t_{0},x_{0}),R))$
is a solution, then for any $\delta\in(0,1)$, there are $\alpha\in(0,1]$
and $C>0$ depending only on $(\delta,n,\lambda,\Lambda)$ such that
\[
\vert u(t_{1},x_{1})-u(t_{2},x_{2})\vert\leq C\left(\frac{\vert t_{1}-t_{2}\vert^{1/2}\vee\vert x_{1}-x_{2}\vert}{R}\right)^{\alpha}\underset{Q((t_{0},x_{0}),R)}{Osc}u
\]
for any $(t_{1},x_{1}),(t_{2},x_{2})\in Q((t_{0},x_{0}),\delta R)$.
\end{thm}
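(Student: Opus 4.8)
The plan is to derive Theorem~\ref{thm: 4. nash's continuity theorem} from the two‑sided Aronson estimate of Theorem~\ref{The-fundamental-solution-ln} by the classical De~Giorgi--Nash--Moser oscillation argument in the heat‑kernel form of \cite{fabes1989new,stroock1988diffusion}. The point that keeps all constants dependent only on $(n,\lambda,\Lambda)$ is that $\Vert b\Vert_{L^{\infty}(0,T;L^{n}(\rn))}$ is invariant under the parabolic rescaling $(a,b,u)(t,x)\mapsto(a,\,R\,b,\,u)(t_{0}+R^{2}t,\,x_{0}+Rx)$, which also preserves the ellipticity constant; so after a translation and rescaling we may take $(t_{0},x_{0})=(0,0)$, $R=1$, and it suffices to prove an \textbf{oscillation reduction lemma}: there are $\rho\in(0,\tfrac12]$ and $\sigma\in(0,1)$, depending only on $(n,\lambda,\Lambda)$, such that every solution $u$ of \eqref{eq: 1. problem equation} which is $C^{1,2}$ up to the closure of $Q:=Q((0,0),1)$ satisfies $\operatorname*{Osc}_{Q_{\rho}}u\le\sigma\operatorname*{Osc}_{Q}u$ with $Q_{\rho}=(-\rho^{2},0)\times B(0,\rho)$. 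Being purely local, it then holds with $Q$ replaced by any parabolic ball compactly contained in the domain of $u$.

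To prove the lemma put $\omega=\operatorname*{Osc}_{\bar Q}u$. The functions $u-\min_{\bar Q}u$ and $\max_{\bar Q}u-u$ are nonnegative solutions summing to $\omega$, so at least one of them — call it $v$ — satisfies $v\ge0$ on $\bar Q$ and $\bigl|\{(s,\xi)\in Q^{\flat}:v(s,\xi)\ge\tfrac{\omega}{2}\}\bigr|\ge\tfrac12|Q^{\flat}|$, where $Q^{\flat}:=(-1,-\tfrac12)\times B(0,\tfrac12)$. Let $\Gamma^{B}$ be the fundamental solution of \eqref{eq: 1. problem equation} on the cylinder over $B(0,1)$ with zero lateral (Dirichlet) data, which exists and is smooth because $a,b$ are smooth. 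Since $v\ge0$ everywhere, in particular on the lateral boundary, the maximum principle gives, for $-1<s<t\le0$ and $x\in B(0,1)$,
\[
v(t,x)\ \ge\ \int_{B(0,1)}\Gamma^{B}(t,x;s,\xi)\,v(s,\xi)\,d\xi .
\]
Granting the key lower bound $\Gamma^{B}(t,x;s,\xi)\ge c_{0}=c_{0}(n,\lambda,\Lambda)>0$ for $x\in B(0,\rho)$, $\xi\in B(0,\tfrac12)$ and $t-s\in[\tfrac14,1]$, we get, for $(t,x)\in Q_{\rho}$ and $s\in(-1,-\tfrac12)$, $v(t,x)\ge c_{0}\tfrac{\omega}{2}\,|\{v(s,\cdot)\ge\tfrac{\omega}{2}\}\cap B(0,\tfrac12)|$; integrating over $s\in(-1,-\tfrac12)$ and using the measure bound yields $v\ge c_{1}\omega$ on $Q_{\rho}$ with $c_{1}=c_{1}(n,\lambda,\Lambda)>0$, whence $\operatorname*{Osc}_{Q_{\rho}}u=\operatorname*{Osc}_{Q_{\rho}}v\le(1-c_{1})\omega$ and the lemma holds with $\sigma=1-c_{1}$.

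The \textbf{main obstacle} is this killed‑kernel lower bound, and it is exactly where the Gaussian \emph{upper} bound of Theorem~\ref{thm: 3. ln upper bound} (valid with scale‑invariant constants in the critical case) is used. Writing $\Gamma^{B}=\Gamma-h$ with $h\ge0$ the "exit correction'' accounting for paths leaving $B(0,1)$ before time $t$, one estimates $h$ from above using the heat‑kernel upper bound together with a survival (maximal) estimate for the diffusion associated with the adjoint operator; by the divergence‑free condition \eqref{eq: 1. divergence free} this adjoint equation is again of the form \eqref{eq: 1. problem equation} (as recorded in Section~2), so its kernel obeys the same two‑sided bounds, which is what makes the argument close. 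One shows that on the indicated inner space‑time region $0\le h\le\tfrac12\Gamma$, while the Aronson lower bound gives $\Gamma(t,x;s,\xi)\ge c\,(t-s)^{-n/2}e^{-C|x-\xi|^{2}/(t-s)}\ge c'>0$ there; together these yield $\Gamma^{B}\ge\tfrac12\Gamma\ge c_{0}$. (If one prefers to replace $B(0,1)$ by $B(0,1-\varepsilon)$ to sidestep boundary regularity of $v$, the bound is uniform for small $\varepsilon$ and one lets $\varepsilon\downarrow0$.)

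Finally, iterate. Applying the localized lemma to the nested balls $Q((t_{*},x_{*}),\rho^{k}r)$ gives $\operatorname*{Osc}_{Q((t_{*},x_{*}),\rho^{k}r)}u\le\sigma^{k}\operatorname*{Osc}_{Q((t_{*},x_{*}),r)}u$ for all $k$. Given $(t_{1},x_{1}),(t_{2},x_{2})\in Q((t_{0},x_{0}),\delta R)$ with $t_{1}\le t_{2}$, set $d=|t_{1}-t_{2}|^{1/2}\vee|x_{1}-x_{2}|$. There is $c(\delta)>0$ so that for $d\le c(\delta)R$ the cylinder $Q((t_{2},x_{2}),2d)$ contains $(t_{1},x_{1})$ and is compactly contained in $Q((t_{0},x_{0}),R)$; choosing $k$ with $\rho^{k+1}c(\delta)R<2d\le\rho^{k}c(\delta)R$ and running the iterated bound from scale $c(\delta)R$ down to scale $2d$ gives
\[
|u(t_{1},x_{1})-u(t_{2},x_{2})|\ \le\ \operatorname*{Osc}_{Q((t_{2},x_{2}),2d)}u\ \le\ \sigma^{k}\operatorname*{Osc}_{Q((t_{0},x_{0}),R)}u\ \le\ C\Bigl(\tfrac{d}{R}\Bigr)^{\alpha}\operatorname*{Osc}_{Q((t_{0},x_{0}),R)}u,
\]
with $\alpha=\log(1/\sigma)/\log(1/\rho)$ (reduced to $\le1$ if necessary) and $C=C(\delta,n,\lambda,\Lambda)$; the complementary range $d>c(\delta)R$ is trivial since the right‑hand side then already dominates $\operatorname*{Osc}_{Q((t_{0},x_{0}),R)}u$. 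This is the asserted estimate.
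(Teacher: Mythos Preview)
Your proposal is correct and follows the same overall architecture as the paper's Appendix: deduce a lower bound for the Dirichlet (killed) fundamental solution from the two-sided Aronson estimate, use it to obtain an oscillation reduction on nested parabolic balls, and iterate to Hölder continuity. Two steps are executed differently, and in both the paper's version is a bit cleaner.

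\emph{Killed-kernel lower bound.} You write $\Gamma^{B}=\Gamma-h$ and argue that $h$ is small via a survival/maximal estimate for the adjoint diffusion combined with the Gaussian upper bound. The paper avoids the adjoint and any probabilistic detour: for $f\ge0$ supported in $B(x_{0},\delta R)$ it applies the maximum principle to $w(s,x)=\Gamma_{s}f(x)-\Gamma_{s}^{x_{0},R}f(x)-M$, where $M=\sup_{s\le t,\,z\notin B(x_{0},R)}\Gamma_{s}f(z)$, to get $\Gamma^{x_{0},R}(t,x;0,\xi)\ge\Gamma(t,x;0,\xi)-\sup_{s\le t,\,z\notin B(x_{0},R),\,y\in B(x_{0},\delta R)}\Gamma(s,z;0,y)$. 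The Gaussian upper bound controls the subtracted term and the Gaussian lower bound controls $\Gamma$, yielding the local lower bound directly (Theorem~\ref{thm: 4. local lower bound}); a Chapman--Kolmogorov chaining extends it to all of $B(x_{0},\delta R)$ and $t\le R^{2}$.

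\emph{Oscillation reduction.} You run a De~Giorgi-type measure argument on a past cylinder $Q^{\flat}$. The paper instead extracts from the killed-kernel lower bound the super-mean-value inequality $u(t,x)\ge C^{-1}\fint_{B(x_{0},\delta R)}u(t_{0}-R^{2},\xi)\,d\xi$ for nonnegative solutions, applies it to both $M(R)-u$ and $u-m(R)$, and simply \emph{adds} the two resulting inequalities to obtain $\operatorname{Osc}_{Q(\delta R)}u\le\theta\,\operatorname{Osc}_{Q(R)}u$; no measure alternative is needed. Your iteration to the final Hölder estimate is the same as the paper's.
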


The proof of this theorem is in the Appendix. Applying this theorem
to the fundamental solution, we have the following corollary.
\begin{cor}
\label{cor: holder of FS}There exist $\alpha\in(0,1]$ and $C>0$
depending only on $(n,\lambda,\Lambda)$ such that for any $\delta>0$,
we have 
\[
\vert\Gamma(t_{1},x_{1};0,\xi_{1})-\Gamma(t_{2},x_{2};0,\xi_{2})\vert\leq\frac{C}{\delta^{n}}\left(\frac{\vert t_{1}-t_{2}\vert^{1/2}\vee\vert x_{1}-x_{2}\vert\vee\vert\xi_{1}-\xi_{2}\vert}{\delta}\right)^{\alpha}
\]
for all $(t_{1},x_{1},\xi_{1}),(t_{2},x_{2},\xi_{2})\in[\delta^{2},\infty)\times\rn\times\rn$
with $\vert x_{1}-x_{2}\vert\vee\vert\xi_{1}-\xi_{2}\vert\leq\delta$.
\end{cor}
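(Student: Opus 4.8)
The plan is to deduce the estimate from Nash's continuity theorem (Theorem~\ref{thm: 4. nash's continuity theorem}, applied with its parameter equal to $1/2$, so that the resulting $\alpha$ and $C$ depend only on $n,\lambda,\Lambda$) together with the Gaussian upper bound of Theorem~\ref{thm: 3. ln upper bound}, working separately in the space-time variables $(t,x)$ and in the variable $\xi$. Write $d:=|t_{1}-t_{2}|^{1/2}\vee|x_{1}-x_{2}|\vee|\xi_{1}-\xi_{2}|$ and recall $t_{1},t_{2}\ge\delta^{2}$. (Throughout we regard $a,b$ as extended to all $t>0$, e.g. by $a(t,\cdot)=a(T,\cdot)$, $b(t,\cdot)=b(T,\cdot)$ for $t>T$, which changes neither $\lambda$ nor $\Lambda$, so that $\Gamma$ and $\Gamma_{T}^{*}$ are defined for all positive times.) I would first dispose of the case that $d$ is not small: by Theorem~\ref{thm: 3. ln upper bound}, $\Gamma(t_{i},x_{i};0,\xi_{i})\le Ct_{i}^{-n/2}\le C\delta^{-n}$, so $|\Gamma(t_{1},x_{1};0,\xi_{1})-\Gamma(t_{2},x_{2};0,\xi_{2})|\le C\delta^{-n}$, and if $d\ge\delta/100$ then $(d/\delta)^{\alpha}\ge100^{-\alpha}\ge100^{-1}$, so the claimed inequality holds after enlarging $C$. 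Hence I may assume $d<\delta/100$, so that each of $|t_{1}-t_{2}|^{1/2},|x_{1}-x_{2}|,|\xi_{1}-\xi_{2}|$ is smaller than $\delta/100\le t_{1}^{1/2}/100$, and by the triangle inequality it is enough to bound $|\Gamma(t_{1},x_{1};0,\xi_{1})-\Gamma(t_{2},x_{2};0,\xi_{1})|$ and $|\Gamma(t_{2},x_{2};0,\xi_{1})-\Gamma(t_{2},x_{2};0,\xi_{2})|$, each by $C\delta^{-n}(d/\delta)^{\alpha}$.

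For the first of these, fix $\xi_{1}$ and set $u(t,x)=\Gamma(t,x;0,\xi_{1})$, a smooth solution of (\ref{eq: 1. problem equation}) for $t>0$. The essential point is to run Nash's theorem on a cylinder of the \emph{fixed} scale $R\sim\delta$, not on one at the scale of the distance between the two points: I would take $R=\delta/10$, $x_{0}=\tfrac12(x_{1}+x_{2})$ and $t_{0}=t_{2}+R^{2}/8$. A direct check using $|t_{1}-t_{2}|<(\delta/100)^{2}$ and $|x_{1}-x_{2}|<\delta/100$ shows that $(t_{1},x_{1}),(t_{2},x_{2})\in Q((t_{0},x_{0}),R/2)$ while $Q((t_{0},x_{0}),R)\subset[\delta^{2}/2,\infty)\times\rn$. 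Since $u\ge0$, Theorem~\ref{thm: 3. ln upper bound} then gives $\underset{Q((t_{0},x_{0}),R)}{Osc}\,u\le\max_{Q((t_{0},x_{0}),R)}u\le C(\delta^{2}/2)^{-n/2}\le C\delta^{-n}$, and Theorem~\ref{thm: 4. nash's continuity theorem} yields $|\Gamma(t_{1},x_{1};0,\xi_{1})-\Gamma(t_{2},x_{2};0,\xi_{1})|\le C\bigl((|t_{1}-t_{2}|^{1/2}\vee|x_{1}-x_{2}|)/R\bigr)^{\alpha}\delta^{-n}\le C\delta^{-n}(d/\delta)^{\alpha}$.

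For the second quantity I would invoke the adjoint picture of Section~2: with $(t_{2},x_{2})$ held fixed, $\Gamma(t_{2},x_{2};0,\xi)=\Gamma_{T}^{*}(T,\xi;T-t_{2},x_{2})$, and $v(s,y):=\Gamma_{T}^{*}(s,y;T-t_{2},x_{2})$ is a smooth solution of the adjoint equation (\ref{eq: 1. adjoint problem equation}) for $s>T-t_{2}$. Equation (\ref{eq: 1. adjoint problem equation}) is again of the form (\ref{eq: 1. problem equation}) with a divergence-free drift of the same $L_{t}^{\infty}L_{x}^{n}$ norm, so Theorems~\ref{thm: 4. nash's continuity theorem} and~\ref{thm: 3. ln upper bound} apply to $v$ with the same constants; in particular $v(s,y)=\Gamma(t_{2},x_{2};T-s,y)\le C(s-(T-t_{2}))^{-n/2}$, and the available time depth at $s=T$ is $t_{2}\ge\delta^{2}$. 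Repeating the previous step with $R=\delta/10$, $y_{0}=\tfrac12(\xi_{1}+\xi_{2})$ and $s_{0}=T+R^{2}/8$, the points $(T,\xi_{1}),(T,\xi_{2})$ lie in $Q((s_{0},y_{0}),R/2)$, the cylinder $Q((s_{0},y_{0}),R)$ lies in $\{s:s-(T-t_{2})\ge\delta^{2}/2\}\times\rn$, hence $\underset{Q((s_{0},y_{0}),R)}{Osc}\,v\le C\delta^{-n}$, and Theorem~\ref{thm: 4. nash's continuity theorem} gives $|\Gamma(t_{2},x_{2};0,\xi_{1})-\Gamma(t_{2},x_{2};0,\xi_{2})|\le C\delta^{-n}(|\xi_{1}-\xi_{2}|/\delta)^{\alpha}\le C\delta^{-n}(d/\delta)^{\alpha}$. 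Adding the two contributions and renaming $C$ gives the corollary.

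The main obstacle is geometric rather than analytic. Because $\Gamma(\cdot,\cdot;0,\xi)$ is defined, and bounded by $Ct^{-n/2}$, only for $t>0$, two points at times $\ge\delta^{2}$ whose parabolic distance is comparable to $\delta$ cannot in general be surrounded by a parabolic cylinder lying in positive time; this is exactly why one must both peel off the regime $d\ge\delta/100$ at the start (where the trivial bound $C\delta^{-n}$ already suffices) and then use cylinders of the fixed radius $R\sim\delta$ — large enough that the oscillation is still controlled by $C\delta^{-n}$, yet placed so the cylinder stays in $\{t\ge\delta^{2}/2\}$ — so that the Hölder gain $(d/\delta)^{\alpha}$ genuinely emerges from Theorem~\ref{thm: 4. nash's continuity theorem}. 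A subsidiary point is that regularity in $\xi$ must be obtained from the adjoint equation, which is legitimate only because the divergence-free condition keeps the adjoint of the same type and with the same $L_{t}^{\infty}L_{x}^{n}$ bound.
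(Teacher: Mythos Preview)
Your argument is correct and is precisely how one fills in the details the paper omits: the paper gives no proof of this corollary, stating only that it follows by applying Theorem~\ref{thm: 4. nash's continuity theorem} to the fundamental solution, and your use of the Gaussian upper bound to control the oscillation on a cylinder of fixed scale $R\sim\delta$, together with the adjoint relation $\Gamma(t,x;\tau,\xi)=\Gamma_{T}^{*}(T-\tau,\xi;T-t,x)$ for the $\xi$-regularity, is exactly the intended route. One small technical slip: in the adjoint step your cylinder reaches $s=T+R^{2}/8>T$, where the adjoint coefficients $a(T-s,\cdot),\,b(T-s,\cdot)$ are evaluated at negative time, so your parenthetical extension of $a,b$ should be made to all $t\in\mathbb{R}$ (e.g.\ $a(t,\cdot)=a(0,\cdot)$, $b(t,\cdot)=b(0,\cdot)$ for $t<0$) rather than only to $t>0$; with that harmless amendment the proof is complete.
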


Now we are well prepared for proving the uniqueness and Hölder continuity
of the weak solutions. Given any $(a,b)$ satisfying conditions (\ref{eq: 1. ellipticity}),
(\ref{eq: 1. divergence free}) and $b$ belonging to $L^{\infty}(0,T;L^{n}(\rn))$,
by mollification we can find a sequence of smooth $(a_{m},b_{m})$
such that they satisfy conditions (\ref{eq: 1. ellipticity}), (\ref{eq: 1. divergence free})
and $\Vert b_{m}\Vert_{L_{t}^{\infty}L_{x}^{n}}\leq\Vert b\Vert_{L_{t}^{\infty}L_{x}^{n}}$.
Moreover, $b_{m}$ are compactly supported in space, $a_{m}\rightarrow a$
in $L_{loc}^{p}([0,T]\times\rn)$ for any $1\leq p<\infty$ and $b_{m}\rightarrow b$
in $L_{t}^{p}L_{x}^{n}$ for any $1\leq p<\infty$. Denote their corresponding
fundamental solution as $\Gamma^{m}$, then they have a uniform Aronson
estimate, and hence the family of the associated fundamental solutions
are equi-continuous in $[\delta^{2},\infty)\times\rn\times\rn$ according
to Corollary \ref{cor: holder of FS}. Thus, by the Arzela-Ascoli
theorem, there is a sub-sequence of $\{\Gamma^{m}\}$ converging locally
uniformly to some $\Gamma$. Moreover, $\Gamma$ still satisfies the
same Aronson estimate, Hölder continuity and Chapman-Kolmogorov equation.
\begin{thm}
Suppose equation (\ref{eq: 1. problem equation}) satisfies conditions
(\ref{eq: 1. ellipticity}), (\ref{eq: 1. divergence free}), and
consider $b\in L^{\infty}(0,T;L^{n}(\rn))$. Given initial value $f\in L^{2}(\rn)$,
there exists a unique Hölder continuous weak solution $u(t,x)\in L^{\infty}(0,T;L^{2}(\rn))\cap L^{2}(0,T;H^{1}(\rn))$
which satisfies $u(0,x)=f(x)$. Moreover, $\frac{\partial u}{\partial t}\in L^{2}(0,T;H^{-1}(\rn))$
and 
\[
u(t,x)=\int_{\rn}\Gamma(t,x;\tau,\xi)u(\tau,\xi)d\xi.
\]
\end{thm}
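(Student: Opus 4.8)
The plan is to build the solution from the smooth approximations $(a_{m},b_{m})$ and their fundamental solutions $\Gamma^{m}$ introduced just before the statement, and to get uniqueness from a direct energy identity in which the divergence-free condition annihilates the drift term.

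\emph{Existence.} I would set $u(t,x):=\int_{\rn}\Gamma(t,x;0,\xi)f(\xi)\,d\xi$. Let $u_{m}$ be the solution of the $(a_{m},b_{m})$-equation with datum $f$; by the uniform Gaussian upper bound (Theorem~\ref{thm: 3. ln upper bound}) and dominated convergence, $u_{m}(t,x)=\int_{\rn}\Gamma^{m}(t,x;0,\xi)f(\xi)\,d\xi\to u(t,x)$ pointwise. Testing the $(a_{m},b_{m})$-equation with $u_{m}$ and using $\int_{\rn}(b_{m}\cdot\nabla u_{m})u_{m}\,dx=0$ (which holds because $b_{m}$ is divergence-free) yields $\sup_{t\le T}\|u_{m}(t)\|_{L^{2}}^{2}+2\lambda\int_{0}^{T}\|\nabla u_{m}(s)\|_{L^{2}}^{2}\,ds\le\|f\|_{L^{2}}^{2}$, so $u_{m}$ is bounded in $L_{t}^{\infty}L_{x}^{2}\cap L_{t}^{2}H_{x}^{1}$. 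Since $n\ge3$, Sobolev embedding gives $u_{m}\in L_{t}^{2}L_{x}^{2n/(n-2)}$, and because $\tfrac{1}{n}+\tfrac{n-2}{2n}=\tfrac12$ the product $b_{m}u_{m}$ is bounded in $L_{t}^{2}L_{x}^{2}$; hence $\partial_{t}u_{m}=\divg(a_{m}\nabla u_{m})-\divg(b_{m}u_{m})$ is bounded in $L_{t}^{2}H_{x}^{-1}$. By Aubin--Lions a subsequence converges strongly in $L_{loc}^{2}$, weakly in $L_{t}^{2}H_{x}^{1}$ and weak-$*$ in $L_{t}^{\infty}L_{x}^{2}$, and by uniqueness of limits the limit equals $u$. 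Passing to the limit in the weak formulation --- using $a_{m}\to a$ a.e.\ with a uniform bound, $b_{m}\to b$ in $L_{t}^{2}L_{x}^{n}$, and rewriting the drift term as $-\int\!\!\int b_{m}u_{m}\cdot\nabla\phi$ --- shows $u$ is a weak solution with $\partial_{t}u\in L_{t}^{2}H_{x}^{-1}$; the Lions--Magenes lemma then gives $u\in C([0,T];L^{2})$, so that $u(0,\cdot)=f$ holds in $L^{2}$.

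\emph{Hölder continuity.} I would insert the equicontinuity estimate for $\Gamma$ from Corollary~\ref{cor: holder of FS} into the representation $u(t,x)=\int_{\rn}\Gamma(t,x;0,\xi)f(\xi)\,d\xi$, splitting the $\xi$-integral at $|\xi|=M$: the inner part is bounded by $|B(0,M)|^{1/2}\|f\|_{L^{2}}$ times the Hölder modulus of $\Gamma$, and the outer part by the Gaussian tail of $\Gamma$ times $\|f\|_{L^{2}}$. Balancing $M$ against the distance between the two space-time points produces a local Hölder modulus for $u$ on $(0,T]\times\rn$.

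\emph{Uniqueness and the reproducing formula.} Let $v$ be the difference of two weak solutions in $L_{t}^{\infty}L_{x}^{2}\cap L_{t}^{2}H_{x}^{1}$ with the same datum, so $v(0)=0$. As above $bv\in L_{t}^{2}L_{x}^{2}$, hence $\partial_{t}v=\divg(a\nabla v)-\divg(bv)\in L_{t}^{2}H_{x}^{-1}$, $v\in C([0,T];L^{2})$, and
\[
\frac{d}{dt}\|v(t)\|_{L^{2}}^{2}=2\langle\partial_{t}v,v\rangle=-2\int_{\rn}\langle a\nabla v,\nabla v\rangle\,dx-2\int_{\rn}(b\cdot\nabla v)\,v\,dx.
\]
The crucial point is that $\int_{\rn}(b\cdot\nabla v)v\,dx=\tfrac12\int_{\rn}b\cdot\nabla(v^{2})\,dx=0$: here $\divg b=0$, $\nabla(v^{2})=2v\nabla v$, and $v\nabla v(t,\cdot)\in L_{x}^{n/(n-1)}$ is in Hölder duality with $b(t,\cdot)\in L_{x}^{n}$, so the pairing makes sense and vanishes --- this is justified by mollifying $b$ (or $v^{2}$) and a truncation in $|x|$. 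Therefore $\frac{d}{dt}\|v(t)\|_{L^{2}}^{2}\le-2\lambda\|\nabla v(t)\|_{L^{2}}^{2}\le0$, and $v(0)=0$ forces $v\equiv0$. Applying this uniqueness on $[\tau,T]$ with initial value $u(\tau,\cdot)$, together with the Chapman--Kolmogorov equation for $\Gamma$, gives $u(t,x)=\int_{\rn}\Gamma(t,x;\tau,\xi)u(\tau,\xi)\,d\xi$ for all $0\le\tau<t$.

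\emph{Main obstacle.} I expect the delicate step to be the vanishing of $\int_{\rn}(b\cdot\nabla v)v\,dx$ for a merely $L_{t}^{\infty}L_{x}^{n}$ drift and a general energy-class $v$: this is precisely where the divergence-free hypothesis and the borderline exponent $q=n$ (with $n\ge3$, so that $2n/(n-2)<\infty$) are essential. The same reformulation $b_{m}\cdot\nabla u_{m}=\divg(b_{m}u_{m})$, with $b_{m}u_{m}$ bounded in $L_{t}^{2}L_{x}^{2}$, is also what makes the limit passage in the existence part work, since $b_{m}\to b$ only in $L_{t}^{p}L_{x}^{n}$ for $p<\infty$.
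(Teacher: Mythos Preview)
Your proposal is correct and follows essentially the same approach as the paper: construct $u$ via the approximating fundamental solutions $\Gamma^{m}$, use the uniform energy estimate (enabled by $\int(b_{m}\cdot\nabla u_{m})u_{m}=0$) to get compactness and pass to the limit in the weak formulation, then obtain uniqueness from the energy identity where the divergence-free condition kills the drift term. If anything, you are more careful than the paper at the points that matter --- you invoke Aubin--Lions to get the strong $L^{2}_{loc}$ convergence needed to pass to the limit in the drift term, and you justify explicitly why $\int_{\rn}(b\cdot\nabla v)v\,dx=0$ for the limiting, merely energy-class $v$ via the duality $v\nabla v\in L^{n/(n-1)}_{x}$, $b\in L^{n}_{x}$ --- whereas the paper simply asserts these passages.
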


\begin{proof}
Given $f\in L^{2}(\rn)$, denote $u^{m}(t,x)=\Gamma_{t}^{m}f(x)$
the solution corresponding to $(a_{m},b_{m})$ and $u(t,x)=\Gamma_{t}f(x)$.
Since $\Gamma^{m}\rightarrow\Gamma$ point-wise, the dominated convergence
implies $u^{m}\rightarrow u$ point-wise as well. Notice that we have
energy inequality 
\[
\Vert u^{m}(t,\cdot)\Vert_{L_{x}^{2}}^{2}+\lambda\int_{0}^{t}\Vert\nabla u^{m}(s,\cdot)\Vert_{L_{x}^{2}}^{2}ds\leq\Vert f\Vert_{L_{x}^{2}}^{2},
\]
which implies that $u^{m}$ are weakly compact in $L^{\infty}(0,T;L^{2}(\rn))\cap L^{2}(0,T;H^{1}(\rn))$.
So its weak limit must be $u$ as defined above. Since $u^{m}$ satisfies
the following identity
\begin{multline*}
\int_{0}^{T}\int_{\mathbb{R}^{n}}u^{m}(t,x)\frac{\partial}{\partial t}\varphi(t,x)\;dxdt-\int_{0}^{T}\int_{\mathbb{R}^{n}}\langle\nabla\varphi(t,x)\cdot a_{m}(t,x),\nabla u^{m}(t,x)\rangle\;dxdt+\\
\int_{0}^{T}\int_{\mathbb{R}^{n}}\nabla\varphi(t,x)\cdot b_{m}(t,x)u^{m}(t,x)\;dxdt=0
\end{multline*}
for $\varphi\in C_{0}^{\infty}([0,T]\times\mathbb{R}^{n})$, by taking
$m\rightarrow\infty$ we obtain 
\begin{multline*}
\int_{0}^{T}\int_{\mathbb{R}^{n}}u(t,x)\frac{\partial}{\partial t}\varphi(t,x)\;dxdt-\int_{0}^{T}\int_{\mathbb{R}^{n}}\langle\nabla\varphi(t,x)\cdot a(t,x),\nabla u(t,x)\rangle\;dxdt+\\
\int_{0}^{T}\int_{\mathbb{R}^{n}}\nabla\varphi(t,x)\cdot b(t,x)u(t,x)\;dxdt=0,
\end{multline*}
which implies that $u$ is a weak solution. Also we have 
\begin{multline*}
\left|\int_{0}^{T}\int_{\mathbb{R}^{n}}\langle\nabla\varphi(t,x)\cdot a(t,x),\nabla u(t,x)\rangle\;dxdt-\int_{0}^{T}\int_{\mathbb{R}^{n}}\nabla\varphi(t,x)\cdot b(t,x)u(t,x)\;dxdt\right|\\
\leq(\frac{1}{\lambda}+\Lambda)\Vert\varphi\Vert_{L^{2}(0,T;H^{1})}\Vert u\Vert_{L^{2}(0,T;H^{1})},
\end{multline*}
and so 
\[
\left|\int_{0}^{T}\int_{\mathbb{R}^{n}}u(t,x)\frac{\partial}{\partial t}\varphi(t,x)\;dxdt\right|\leq(\frac{1}{\lambda}+\Lambda)\Vert\varphi\Vert_{L^{2}(0,T;H^{1})}\Vert u\Vert_{L^{2}(0,T;H^{1})}.
\]
Now we obtain $\frac{\partial u}{\partial t}\in L^{2}(0,T;H^{-1}(\rn))$.
This allows us to take $\varphi=u$ to have the energy estimate 

\[
\Vert u(t,\cdot)\Vert_{2}^{2}+\lambda\int_{0}^{t}\Vert\nabla u(s,\cdot)\Vert_{2}^{2}ds\leq\Vert f\Vert_{2}^{2},
\]
and therefore we obtain the uniqueness of the weak solution in $L^{\infty}(0,T;L^{2}(\rn))\cap L^{2}(0,T;H^{1}(\rn))$.
Suppose $\Gamma^{m}\rightarrow\Gamma'$, which will define another
$u'$ and it satisfies all the results above. So $u=u'$ implies $\Gamma=\Gamma'$,
and we also have the uniqueness of the fundamental solution.
\end{proof}

\section*{Appendix}

Here we prove the Nash's continuity theorem. The proof is inspired
by \cite{stroock1988diffusion}, which was originally written in probability
language and relies heavily on the strong Markov property of the diffusion
process. Here we rewrite it using a PDE approach instead. 

We still assume $(a,b)$ to be smooth and consider the Dirichlet problem
on $[0,T]\times B(x_{0},R)$ for any fixed $x_{0}$ and $R>0$.
\begin{equation}
\partial_{t}u-\divg(a\cdot\nabla u)+b\cdot\nabla u=0\qquad\mbox{ in }(0,T]\times B(x_{0},R)\label{eq: 4. dirichlet problem}
\end{equation}
with $u(0,x)=f(x)$ and $u(t,x)=0$ for $x\in\partial B(x_{0},R)$.
Clearly there is a unique regular fundamental solution $\Gamma^{x_{0},R}(t,x;\tau,\xi)$
with $x,\xi\in B(x_{0},R)$. So for any $f\in C_{0}^{\infty}(B(x_{0},R))$
satisfying $f\geq0$, 
\[
\Gamma_{t}^{x_{0},R}f(x)=\int_{B(x_{0},R)}\Gamma^{x_{0},R}(t,x;0,\xi)f(\xi)d\xi
\]
is the unique strong solution to Dirichlet problem. We will prove
the following lower bound for $\Gamma^{x_{0},R}(t,x;\tau,\xi)$, which
is also interesting by its own. 
\begin{thm}
\label{thm: 4. local lower bound}For any $\delta\in(0,1)$, there
exists a constant $C=C(\delta,n,\lambda,\Lambda)$ such that 
\[
\Gamma^{x_{0},R}(t,x;\tau,\xi)\geq\frac{1}{C(t-\tau)^{n/2}}\exp\left(-C\frac{\vert x-\xi\vert^{2}}{t-\tau}\right)
\]
for any $t-\tau\in(0,R^{2}]$ and $x,\xi\in B(x_{0},\delta R)$.
\end{thm}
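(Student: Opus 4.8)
The plan is to deduce the Dirichlet lower bound from the global two‑sided Aronson estimate of Theorem~\ref{The-fundamental-solution-ln} by first proving a near‑diagonal lower bound for $\Gamma^{x_0,R}$ on a short time scale and then running a Chapman--Kolmogorov chaining argument. Since the asserted inequality, the constraint $t-\tau\le R^2$, and the condition $x,\xi\in B(x_0,\delta R)$ are all invariant under translation and under the parabolic scaling $u(t,x)\mapsto u(\rho^2t,\rho x)$ --- which carries the Dirichlet problem on $B(x_0,R)$ to one on a rescaled ball while preserving $\lambda$ and $\Lambda=\Vert b\Vert_{L^\infty_tL^n_x}$ --- I would first reduce to $x_0=0$, $R=1$, $\tau=0$, and fix once and for all $\delta_1=\tfrac{1+\delta}{2}\in(\delta,1)$.

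The first step is a mass‑loss estimate: for every $\xi\in B(0,\delta_1)$,
\[
0\le\Gamma(t,x;0,\xi)-\Gamma^{0,1}(t,x;0,\xi)\le C',\qquad x\in B(0,1),\ t>0,
\]
with $C'=C'(\delta,n,\lambda,\Lambda)$. Indeed, fixing $\xi$ and setting $w(t,x)=\Gamma(t,x;0,\xi)-\Gamma^{0,1}(t,x;0,\xi)$, this $w$ solves equation (\ref{eq: 1. problem equation}) on $(0,\infty)\times B(0,1)$, extends continuously with $w(0,\cdot)=0$ on $B(0,1)$, and equals $\Gamma(t,z;0,\xi)$ for $z\in\partial B(0,1)$ (equivalently, $w$ is the density contribution of paths that leave $B(0,1)$ before time $t$, which makes positivity and the boundary identity transparent). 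By the parabolic maximum principle,
\[
0\le w(t,x)\le\sup_{\sigma>0,\ z\in\partial B(0,1)}\Gamma(\sigma,z;0,\xi),
\]
and since $|z-\xi|\ge 1-\delta_1$ on $\partial B(0,1)$, the global Gaussian upper bound of Theorem~\ref{thm: 3. ln upper bound} together with the elementary identity $\sup_{\sigma>0}\sigma^{-n/2}e^{-a/\sigma}=C_n\,a^{-n/2}$ bounds the right‑hand side by a constant $C'(\delta,n,\lambda,\Lambda)$. Combining this with the global lower bound of Theorem~\ref{The-fundamental-solution-ln}: for $y,\xi\in B(0,\delta_1)$ with $|y-\xi|\le\sqrt{s}$ we have $\Gamma(s,y;0,\xi)\ge \tfrac1C s^{-n/2}e^{-C}\ge c_1 s^{-n/2}$, hence
\[
\Gamma^{0,1}(s,y;0,\xi)\ \ge\ c_1 s^{-n/2}-C'\ =\ s^{-n/2}\bigl(c_1-C's^{n/2}\bigr)\ \ge\ \tfrac{c_1}{2}\,s^{-n/2}
\]
as soon as $0<s\le\epsilon_1$, where $\epsilon_1=\epsilon_1(\delta,n,\lambda,\Lambda)>0$ is chosen so small that $C'\epsilon_1^{n/2}\le c_1/2$ and, in addition, $\tfrac14\sqrt{\epsilon_1}<\delta_1-\delta$ (needed below); by time‑translation this near‑diagonal bound holds over any time interval of length $\le\epsilon_1$.

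Finally I would chain. Given $x,\xi\in B(0,\delta)$ and $0<t\le1$, put $N=\max\bigl(\lceil 1/\epsilon_1\rceil,\ \lceil 4|x-\xi|^2/t\rceil\bigr)$, so that $t/N\le\epsilon_1$ and $|x-\xi|/N\le\tfrac12\sqrt{t/N}$; let $z_j=\xi+\tfrac jN(x-\xi)\in B(0,\delta)$ for $0\le j\le N$ and $B_j=B\bigl(z_j,\tfrac14\sqrt{t/N}\bigr)\subset B(0,\delta_1)$ for $1\le j\le N-1$. For any intermediate points chosen in the $\overline{B_j}$, consecutive points lie within $\sqrt{t/N}$ of one another and inside $B(0,\delta_1)$, so the near‑diagonal bound applies to each of the $N$ steps. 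The Chapman--Kolmogorov equation for the Dirichlet kernel, with the intermediate integrations restricted to the $B_j$, then yields
\[
\Gamma^{0,1}(t,x;0,\xi)\ \ge\ \Bigl(\tfrac{c_1}{2}\Bigr)^{N}(t/N)^{-nN/2}\prod_{j=1}^{N-1}|B_j|\ \ge\ c_2^{\,N}(t/N)^{-n/2}
\]
for some $c_2=c_2(\delta,n,\lambda,\Lambda)\in(0,1]$, after absorbing the volumes $|B_j|=c_n(t/N)^{n/2}$. Since $(t/N)^{-n/2}\ge t^{-n/2}$ as $N\ge1$, and $N\le C_3\bigl(1+|x-\xi|^2/t\bigr)$ with $C_3=C_3(\delta,n,\lambda,\Lambda)$, we obtain $c_2^N\ge e^{-C_4}e^{-C_4|x-\xi|^2/t}$, hence $\Gamma^{0,1}(t,x;0,\xi)\ge \tfrac1C t^{-n/2}\exp(-C|x-\xi|^2/t)$; undoing the normalizations and the translation gives the theorem with $C=C(\delta,n,\lambda,\Lambda)$. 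The two places requiring care are the boundary‑layer regularity underlying the mass‑loss estimate (classical for smooth coefficients, and bypassed entirely by the exit‑path interpretation of $\Gamma-\Gamma^{0,1}$) and the chaining bookkeeping --- verifying that the near‑diagonal estimate, valid only on the scale $s\le\epsilon_1$, can be patched across a chain whose length $N$ grows merely linearly in $1+|x-\xi|^2/t$ while all intermediate balls remain inside $B(0,\delta_1)$; the latter, though elementary, is the standard technical core of Aronson--Nash type lower‑bound arguments, and everything else reduces to the already‑established global estimates.
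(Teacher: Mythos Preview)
Your proof is correct and follows essentially the same approach as the paper: a maximum-principle comparison between $\Gamma$ and $\Gamma^{x_0,R}$ (the paper phrases this with smooth test functions $f$ rather than directly at the kernel level, which sidesteps the regularity caveat you flag), the global Aronson bounds to obtain a near-diagonal lower bound on a small time scale, and then Chapman--Kolmogorov chaining. The only cosmetic differences are your normalization $R=1$ by scaling and your unified choice of $N$ in the chaining, whereas the paper keeps $R$ explicit and splits the chaining into the two cases $|x-\xi|\ge\epsilon R$ and $|x-\xi|<\epsilon R$, $t>\epsilon^2R^2$.
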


\begin{proof}
Without loss of generality, we will take $\tau=0$. For any $t>0$,
given $f\in C_{0}^{\infty}(B(x_{0},R))$ satisfying $f\geq0$, consider
$w(s,x)=\Gamma_{s}f(x)-\Gamma_{s}^{x_{0},R}f(x)-M$ for $s\in[0,t]$
where 

\[
M=\sup_{s\in[0,t],z\in B(x_{0},R)^{c}}\Gamma_{s}f(z).
\]
Then we notice that $w$ solves (\ref{eq: 4. dirichlet problem})
in $(0,t]\times B(x_{0},R)$ with the initial-boundary condition that
$w(0,x)\leq0$ for $x\in B(x_{0},R)$ and $w(s,x)\leq0$ for $s\in(0,t]$,
$x\in\partial B(x_{0},R)$. So the maximum principle implies that
$w\leq0$ in $(0,t]\times B(x_{0},R)$, which means $\Gamma_{t}^{x_{0},R}f(x)\geq\Gamma_{t}f(x)-M$.
Since this is true for any $f\in C_{0}^{\infty}(B(x_{0},\delta R))^{+}$
with $\delta\in(0,1)$, we have 

\begin{align*}
\Gamma^{x_{0},R}(t,x;0,\xi) & \geq\Gamma(t,x;0,\xi)-\sup_{s\in[0,t],z\in B(x_{0},R)^{c},y\in B(x_{0},\delta R)}\Gamma(s,z;0,y)\\
 & \geq\frac{1}{Ct^{n/2}}\exp\left(-C\frac{\vert x-\xi\vert^{2}}{t}\right)-\sup_{s\in[0,t]}\frac{C}{s^{n/2}}\exp\left(-\frac{(1-\delta)^{2}R^{2}}{Cs}\right)
\end{align*}
for any $x,\xi\in B(x_{0},\delta R)$. Consider the second term, set
$\tilde{t}=t/R^{2}$ and $\tilde{s}=s/R^{2}$
\begin{align*}
 & \sup_{s\in[0,t]}\frac{C}{s^{n/2}}\exp\left(-\frac{(1-\delta)^{2}R^{2}}{Cs}\right)\\
 & \quad=\frac{1}{2Ct^{n/2}}\exp\left(-C\frac{\vert x-\xi\vert^{2}}{t}\right)\sup_{s\in[0,t]}\frac{2C^{2}t^{n/2}}{s^{n/2}}\exp\left(-\frac{(1-\delta)^{2}R^{2}}{Cs}+C\frac{\vert x-\xi\vert^{2}}{t}\right)\\
 & \quad=\frac{1}{2Ct^{n/2}}\exp\left(-C\frac{\vert x-\xi\vert^{2}}{t}\right)\sup_{\tilde{s}\in[0,\tilde{t}]}\frac{2C^{2}\tilde{t}^{n/2}}{\tilde{s}^{n/2}}\exp\left(-\frac{(1-\delta)^{2}}{C\tilde{s}}+C\frac{\vert x-\xi\vert^{2}}{\tilde{t}R^{2}}\right).
\end{align*}
If $\vert x-\xi\vert^{2}\leq\frac{(1-\delta)^{2}R^{2}}{2C^{2}}$ and
$t\le R^{2}$, it implies
\[
\sup_{\tilde{s}\in[0,\tilde{t}]}\frac{2C^{2}\tilde{t}^{n/2}}{\tilde{s}^{n/2}}\exp\left(-\frac{(1-\delta)^{2}}{C\tilde{s}}+C\frac{\vert x-\xi\vert^{2}}{\tilde{t}R^{2}}\right)\leq\sup_{\tilde{s}\in[0,\tilde{t}]}\frac{2C^{2}}{\tilde{s}^{n/2}}\exp\left(-\frac{(1-\delta)^{2}}{2C\tilde{s}}\right),
\]
where $\frac{2C^{2}}{\tilde{s}^{n/2}}\exp\left(-\frac{(1-\delta)^{2}}{2C\tilde{s}}\right)\rightarrow0$
as $\tilde{s}\rightarrow0$. So we can take $\tilde{t}$ small enough
so that $RHS\leq1$ and hence we have 
\[
\Gamma^{x_{0},R}(t,x;0,\xi)\geq\frac{1}{2Ct^{n/2}}\exp\left(-C\frac{\vert x-\xi\vert^{2}}{t}\right)
\]
where $\max\{t,\vert x-\xi\vert^{2}\}\leq\epsilon^{2}R^{2}$ for some
small $\epsilon$ depending on $(\delta,n,\lambda,\Lambda)$. 

Now we use the Chapman-Kolmogorov equation to extend this to any $x,\xi\in B(x_{0},\delta R)$
and $t\in(0,R^{2}]$. First consider $\vert x-\xi\vert\geq\epsilon R$
and any $t$, we set $\xi_{m}=\xi+\frac{m}{k+1}(x-\xi)$, $B_{m}=B(x_{0},\delta R)\cap B(\xi_{m},\frac{\vert x-\xi\vert}{k+1})$,
$t_{m}=\frac{mt}{k+1}$. Then for any $z_{m}\in B_{m}$, we have $\vert z_{m}-z_{m-1}\vert\leq\frac{3\vert x-\xi\vert}{k+1}$.
So, to obtain $\vert z_{m}-z_{m-1}\vert\leq\epsilon R$ and $\vert t_{m}-t_{m-1}\vert\leq\epsilon^{2}R^{2}$,
we just need to choose $k\geq\frac{3}{\epsilon^{2}}$. Now one has

\begin{align*}
\Gamma^{x_{0},R}(t,x;0,\xi) & \geq\int_{B_{1}}\cdots\int_{B_{k}}\prod_{m=0}^{k}\Gamma^{x_{0},R}(t_{m+1},z_{m+1};t_{m},z_{m})dz_{1}\cdots z_{k}\\
 & \geq C(\frac{\vert x-\xi\vert}{k+1})^{nk}\left(\frac{(k+1)^{n/2}}{2Ct^{n/2}}\exp\left(-C\frac{\vert x-\xi\vert^{2}}{(k+1)t}\right)\right)^{k+1}\\
 & \geq C\frac{\vert x-\xi\vert^{nk}}{t^{nk/2}}\frac{1}{t^{n/2}}\exp\left(-C\frac{\vert x-\xi\vert^{2}}{t}\right)\\
 & \geq\frac{1}{Ct^{n/2}}\exp\left(-C\frac{\vert x-\xi\vert^{2}}{t}\right).
\end{align*}
The only case left now is the case where $\vert x-\xi\vert\leq\epsilon R$
and $t\geq\epsilon^{2}R^{2}$. Set $t_{m}$ as before, then

\begin{align*}
\Gamma^{x_{0},R}(t,x;0,\xi) & \geq C(\frac{\epsilon R}{k+1})^{nk}\left(\frac{(k+1)^{n/2}}{2Ct^{n/2}}\exp\left(-C\frac{(k+1)\vert x-\xi\vert^{2}}{t}\right)\right)^{k+1}\\
 & \geq\frac{1}{Ct^{n/2}}\exp\left(-C\frac{\vert x-\xi\vert^{2}}{t}\right),
\end{align*}
and the proof is complete.
\end{proof}
Now we give the proof of Nash's continuity theorem. First consider
a non-negative solution on a parabolic ball $u\in C^{1,2}([t_{0}-R^{2},t_{0}]\times\overline{B(x_{0},R)})$,
clearly we have 
\[
u(t,x)\geq\int_{B(x_{0},R)}\Gamma^{x_{0},R}(t,x;t_{0}-R^{2},\xi)u(t_{0}-R^{2},\xi)d\xi
\]
by the maximum principle. Then by Theorem \ref{thm: 4. local lower bound}
\begin{equation}
u(t,x)\geq\frac{1}{C\vert B(x_{0},\delta_{2}R)\vert}\int_{B(x_{0},\delta_{2}R)}u(t_{0}-R^{2},\xi)d\xi\label{eq: 4. supermean value}
\end{equation}
for any $(t,x)\in[t_{0}-\delta_{1}^{2}R^{2},t_{0}]\times\overline{B(x_{0},\delta_{2}R)}$,
$\delta_{1},\delta_{2}\in(0,1)$ and $C$ depending only on $\delta_{1},\delta_{2},n,\lambda,\Lambda$.
This estimate is called the super-mean value property. 
\begin{lem}
Suppose $u\in C^{1,2}(Q((t_{0},x_{0}),R))$ is a solution, then for
any $\delta\in(0,1)$, there is a $\theta=\theta(\delta,n,\lambda,\Lambda)\in(0,1)$
such that 
\[
\underset{Q((t_{0},x_{0}),\delta R)}{Osc}u\leq\theta\underset{Q((t_{0},x_{0}),R)}{Osc}u.
\]
\end{lem}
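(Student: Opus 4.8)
The plan is to deduce the oscillation decay from the super-mean value property (\ref{eq: 4. supermean value}) by the classical De Giorgi--Nash--Moser argument. First I would normalise: write $M=\max_{Q((t_{0},x_{0}),R)}u$, $m=\min_{Q((t_{0},x_{0}),R)}u$ and $\omega=M-m=\underset{Q((t_{0},x_{0}),R)}{Osc}u$. Since constants solve (\ref{eq: 4. dirichlet problem}) and the equation is linear, both $v:=M-u\ge0$ and $w:=u-m\ge0$ are non-negative solutions on $Q((t_{0},x_{0}),R)$; after shrinking $R$ slightly we may assume they are regular up to the closure, which is harmless by a routine exhaustion. The idea is that one of $v$, $w$ is bounded below by $\omega/2$ on a large portion of the bottom slice $\{t=t_{0}-R^{2}\}$, and the super-mean value property then propagates this into a uniform positive lower bound on the whole sub-cylinder.

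Next comes the measure-theoretic dichotomy. Either
\[
\bigl|\{\xi\in B(x_{0},\delta R):u(t_{0}-R^{2},\xi)\le \tfrac{M+m}{2}\}\bigr|\ \ge\ \tfrac12\,|B(x_{0},\delta R)|,
\]
or the same inequality holds with $\le$ replaced by $\ge$. Assume the first alternative. Then $v(t_{0}-R^{2},\cdot)\ge \omega/2$ on a subset of $B(x_{0},\delta R)$ of measure at least $\tfrac12|B(x_{0},\delta R)|$, so $\int_{B(x_{0},\delta R)}v(t_{0}-R^{2},\xi)\,d\xi\ge \tfrac{\omega}{4}|B(x_{0},\delta R)|$. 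Applying the super-mean value property (\ref{eq: 4. supermean value}) to the non-negative solution $v$ with $\delta_{1}=\delta_{2}=\delta$ yields a constant $C=C(\delta,n,\lambda,\Lambda)\ge1$ such that $v(t,x)\ge \tfrac{\omega}{4C}$ for all $(t,x)\in\overline{Q((t_{0},x_{0}),\delta R)}$, i.e. $u(t,x)\le M-\tfrac{\omega}{4C}$ there, and hence $\underset{Q((t_{0},x_{0}),\delta R)}{Osc}u\le \omega-\tfrac{\omega}{4C}$. In the second alternative I would repeat the argument with $w$ in place of $v$, obtaining $u(t,x)\ge m+\tfrac{\omega}{4C}$ on $\overline{Q((t_{0},x_{0}),\delta R)}$ and the same bound on the oscillation.

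In either case $\underset{Q((t_{0},x_{0}),\delta R)}{Osc}u\le\theta\,\underset{Q((t_{0},x_{0}),R)}{Osc}u$ with $\theta:=1-\tfrac{1}{4C}\in(0,1)$, and $\theta$ depends only on $(\delta,n,\lambda,\Lambda)$ through the constant in (\ref{eq: 4. supermean value}). I expect no serious obstacle here: the real content is already packed into Theorem \ref{thm: 4. local lower bound} and the resulting super-mean value property, and the only points requiring care are that the constant $C$ genuinely depends only on $\delta,n,\lambda,\Lambda$ (and not on the derivative bounds of $a$ or $b$) and the harmless technicality of applying an estimate stated up to the closure of the cylinder to a solution defined on the open cylinder. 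The dichotomy and the reduction to non-negative solutions are entirely elementary.
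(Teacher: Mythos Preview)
Your proof is correct, but the paper avoids the dichotomy entirely. Instead of splitting into cases according to which of $v=M-u$ or $w=u-m$ is large on half the bottom slice, the paper applies the super-mean value inequality \eqref{eq: 4. supermean value} to \emph{both} $v$ and $w$ simultaneously, obtaining
\[
M(R)-M(\delta R)\ge \frac{1}{C|B|}\int_{B}\bigl(M(R)-u(t_{0}-R^{2},\xi)\bigr)\,d\xi,\qquad
m(\delta R)-m(R)\ge \frac{1}{C|B|}\int_{B}\bigl(u(t_{0}-R^{2},\xi)-m(R)\bigr)\,d\xi,
\]
with $B=B(x_{0},\delta R)$. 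Adding these makes the $u$-integrals cancel exactly, yielding $[M(R)-m(R)]-[M(\delta R)-m(\delta R)]\ge \tfrac{1}{C}[M(R)-m(R)]$ in one stroke, hence $\theta=1-\tfrac{1}{C}$. Your level-set dichotomy loses a factor of $4$ in the constant and requires a case distinction; the paper's additive trick is shorter and gives a marginally better $\theta$, but both routes are standard and your argument is perfectly valid.
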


\begin{proof}
Let
\[
M(r)=\max_{Q((t_{0},x_{0}),r)}u,\qquad m(r)=\min_{Q((t_{0},x_{0}),r)}u,
\]
and consider $M(R)-u$ and $u-m(R)$, which are non-negative solutions.
Inequality (\ref{eq: 4. supermean value}) implies that 
\[
M(R)-M(\delta R)\geq\frac{1}{C\vert B(x_{0},\delta R)\vert}\int_{B(x_{0},\delta R)}M(R)-u(t_{0}-R^{2},\xi)d\xi
\]
and 
\[
m(\delta R)-m(R)\geq\frac{1}{C\vert B(x_{0},\delta R)\vert}\int_{B(x_{0},\delta R)}u(t_{0}-R^{2},\xi)-m(R)d\xi.
\]
The sum of above two inequalities gives us 
\[
[M(R)-m(R)]-[M(\delta R)-m(\delta R)]\geq\frac{1}{C}[M(R)-m(R)],
\]
which completes the proof.
\end{proof}
\begin{proof}
[Proof of Theorem \ref{thm: 4. nash's continuity theorem}]Denote
$l=\vert t_{1}-t_{2}\vert^{1/2}\vee\vert x_{1}-x_{2}\vert$. If $\frac{l}{R}\geq1-\delta$,
then it is easy to find $C$ and the proof is done. If $\frac{l}{R}<1-\delta$,
we choose integer $K$ such that $(1-\delta)^{K+1}\leq\frac{l}{R}<(1-\delta)^{K}$.
Assume $t_{1}\leq t_{2}$. Then 
\begin{align*}
\vert u(t_{1},x_{1})-u(t_{2},x_{2})\vert & \leq\underset{Q((t_{2},x_{2}),(1-\delta)^{K}R)}{Osc}u\leq\theta^{K-1}\underset{Q((t_{2},x_{2}),(1-\delta)R)}{Osc}u\\
 & \leq\theta^{K-1}\underset{Q((t_{0},x_{0}),R)}{Osc}u=\theta^{-2}(\theta^{K+1})\underset{Q((t_{0},x_{0}),R)}{Osc}u.
\end{align*}
Now we can find $\alpha$ such that $\theta=((1-\delta)\wedge\theta)^{\alpha}$,
which implies $\theta^{K+1}\leq(1-\delta)^{(K+1)\alpha}\leq(\frac{l}{R})^{\alpha}$.
The proof is complete.
\end{proof}
\medskip

\textbf{Acknowledgements}

The authors would like to thank Professor Zhen Lei at Fudan University
for bringing the attention of the first named author (Z. Qian) to
papers \cite{liskevich2004extra} and \cite{zhang2004strong}.

\bibliographystyle{plain}
\nocite{*}
\bibliography{Lnbound}

\end{document}